\newcounter{proofcount}
\newtheorem{claim}{Claim}
\theoremstyle{remark}
\newtheorem*{cproof/}{Proof of Claim \rev@cproofmark}
\newenvironment{cproof}[1][\@nil]
  {\def\@tmp{#1}%
   \ifx\@tmp\@nnil
       \def\rev@cproofmark{\theclaim}
    \else
       \let\rev@cproofmark\@tmp
    \fi
   \pushQED{\qed}\begin{cproof/}}
  {\popQED\end{cproof/}}
\newcommand{\rr}{\mathbb{R}}
\newcommand{\zz}{\mathbb{Z}}
\newcommand{\origin}{O}
\newcommand{\del}[1]{\partial#1}
\newcommand{\spec}[1]{#1^{-}}
\theoremstyle{theorem}
\newtheorem{thm}{Theorem}[section]
\newtheorem{lem}[thm]{Lemma}
\newtheorem{cor}[thm]{Corollary}
\newtheorem{dfn}[thm]{Definition}
\newtheorem{prp}[thm]{Proposition}
\theoremstyle{definition}
\newtheorem{question}[thm]{Question}
\newtheorem*{rem}{Remark}
\numberwithin{equation}{section}
\DeclareMathOperator{\diam}{diam}
\DeclareMathOperator{\ran}{ran}
\newcommand{\hbor}{H^2_{\normalfont{\textsf{Bor}}}}
\newcommand{\cant}{2^{\omega}}
\newcommand{\baire}{\omega^{\omega}}
\newcommand{\olines}{\mathcal{L}_{\origin}}
\newcommand{\homeo}{\cong}
\newcommand{\ZFC}{\mathsf{ZFC}}
\newcommand{\pp}{\mathbb{P}}
\newcommand{\qq}{\mathbb{Q}}
\newcommand{\set}[2]{\{ \, #1 \,\mid\, #2 \, \}}
\newcommand{\Set}[2]{\left\{ #1 \,\middle\vert\, #2 \right\}}
\newcommand{\define}[1]{{\normalfont{\textbf{#1}}}}
\begin{document}

\title{All Borel Group Extensions of Finite-Dimensional Real Space Are Trivial}

\author{Linus Richter}
\address{Department of Mathematics, National University of Singapore, Singapore}
\email{richter@nus.edu.sg}

\subjclass{54H05 (Primary) 20K35, 20J06, 03E40 (Secondary)}
\keywords{Borel definability, descriptive set theory, group extensions,
group cohomology, forcing}


\begin{abstract}
For $N \geq 2$, we study the structure of definable abelian group extensions of the additive group~$(\rr^N,+)$ by countable abelian (Borel) groups~$G$. Given an extension $H$ of~$(\rr^N,+)$ by $G$, we measure the definability of $H$ by investigating its complexity as a Borel set. We do this by combining homological algebra and descriptive set theory, and hence study the Borel complexity of those functions inducing~$H$, the abelian cocycles. We prove that, for every~$N \geq 2$, there are no non-trivial Borel definable abelian cocycles coding group extensions of~$(\rr^N,+)$ by a countable abelian group~$G$, and hence show that no non-trivial such group extensions exist. This completes the picture first investigated by Kanovei and Reeken in 2000, who proved the case~$N = 1$, and whose techniques we adapt in this work.
\end{abstract}

\maketitle

\section{Introduction}\label{sec:intro}
In this paper, we contribute to the study of \emph{definable classical mathematics}. By classical mathematics we mean those mathematical subjects traditionally unrelated to mathematical logic. The idea of definability is interpreted broadly: as opposed to the universal yet rigid model-theoretical definition, we mean the classification of instances of classical mathematical problems in terms of measures at home in mathematical logic.

One of the earlier examples of the application of logic to classical mathematics was given by Shelah's celebrated solution to the Whitehead problem \cite{MR357114}, but many have followed suit.
Applications of this research programme feature prominently in major areas of contemporary logic such as reverse mathematics \cite{MR4472209,MR1723993} and computability theory \cite{MR882921}. More finely, the study of reductions and effective measures of complexity play an important role throughout modern logic, emphasised by notions such as Weihrauch reducibility \cite{Brattka2021}, computable analysis \cite{MR1795407}, algorithmic randomness \cite{MR2494387,MR2548883,MR2732288}, and many more. Crucially, many of these admit natural connections to classical mathematics, such as the effective relationship between fractal geometry and algorithmic randomness \cite{mayordomo02,lutz03,hitchcock03,MR3811993}. 

\medskip

In this paper, we continue this trend. We choose as our measure framework the study of subsets of the reals in terms of descriptive set theory and apply it to (homological) algebra: we consider the problem of group extensions of abelian groups by abelian groups, and measure which of them are \emph{definable} in the context of descriptive set theory. Analysing the \emph{Borel definability} of group extensions garnered interest in the 1960s through work of Moore \cite{MR171880} and DuPre \cite{MR233919}, and was re-investigated by Kanovei and Reeken in the early 2000s \cite{MR1774679,MR1841758,MR1896675}. Instead of taking a strongly algebraic point of view as Moore and DuPre had done, their work was motivated by ideas in \emph{Hyers-Ulam-Rassias-stability} \cite{MR4076,MR120127,MR280310,MR507327} which measures the stability of spaces (and, more generally, categories) in terms of homomorphisms: can every \emph{almost homomorphism} be closely approximated by a \emph{strict} homomorphism? For certain groups, group extensions stand in equivalence with maps called \emph{cocycles}, whose properties can be likened to Hyers-Ulam-Rassias-(in)stability. In combination with the measurability tools of Polish spaces, \emph{definable} group extensions can be carefully analysed.

\medskip

This analysis is our motivation for this paper. We generalise a result of Kanovei and Reeken concerning the existence of definable group extensions \cite{MR1841758} to higher-dimensional topological spaces, hence proving that the definable algebraic structure (in the sense of Borel measurability) of abelian group extensions of the additive groups $(\rr,+)$ and $(\rr^N,+)$ for $N > 1$ does not differ.

\medskip

To formalise this, let $A,G$ be groups. A \define{group extension} of $A$ by $G$ is a \emph{short exact sequence}: a sequence $c$ of the form
\begin{center}
\begin{tikzcd}
c \colon\hspace{-3em}  &0 \arrow[r] & G \arrow[r, "i"] & E \arrow[r, "j"] & A \arrow[r] & 0
\end{tikzcd}
\end{center}
where $i,j$ are group homomorphisms, $i$ is injective, $j$ is surjective, and where $\text{im}(i) = \ker(j)$. Two group extensions $c,c'$ with
\begin{center}
\begin{tikzcd}
c' \colon\hspace{-3em}  &0 \arrow[r] & G \arrow[r, "i'"] & E' \arrow[r, "j'"] & A \arrow[r] & 0
\end{tikzcd}
\end{center}
are \emph{equivalent} if there is a group isomorphism $p$ for which the following diagram commutes:
\begin{center}
\begin{tikzcd}
            &                                     & E \arrow[rd, "j"] \arrow[dd, "p"] &             &   \\
0 \arrow[r] & G \arrow[rd, "i'"'] \arrow[ru, "i"] & {} \arrow[r, phantom]             & A \arrow[r] & 0 \\
            &                                     & E' \arrow[ru, "j'"']              &             &  
\end{tikzcd}
\end{center}

If we require that all groups $G,E,A$ are abelian, then every extension of~$A$ by~$G$ is in fact induced by an \emph{abelian cocycle} (or \emph{factor set})---a function from~$A^2$ to~$G$ which encodes the structure of~$E$---and vice versa. This is how we study the structure of group extensions here: we consider group extensions from the point of view of abelian cocycles and their properties in terms of functional equations; their structure is captured by the \emph{cohomology~group}.

\medskip

The identification of group extensions in terms of functions between groups has spurred the investigation of definable group extensions: those induced by an abelian cocycle which is \emph{definable}. In this paper, \emph{definable} means \emph{Borel measurable}: we consider groups which carry a natural Polish structure, and investigate those group extensions which are induced by Borel cocycles.

\begin{dfn}\label{dfn:borelDfn}
	A group $G$ is a \define{Borel group} if $G$ is a Borel subset of a standard Borel (or Polish) space, and the group operation is a \define{Borel map}: the pre-image of every Borel set is Borel in the inherited topology.
	 
	Suppose $G,E,A$ are abelian Borel groups inside standard Borel spaces. A group extension $c \colon 0 \rightarrow G \rightarrow E \rightarrow A \rightarrow 0$ is \define{Borel definable} (or just \define{Borel}) if the associated abelian cocycle generating $E$ is a Borel map.
\end{dfn}

In order to take advantage of the identification between group extensions and cocycles, \textbf{we assume from now on that all groups are abelian}. Further, \textbf{the quotient $G$ is always assumed to be countable}.

\medskip

With these conventions established, the following theorem \cite[Theorem~49]{MR1841758} classifies---up to isomorphism between group extensions---all definable group extensions of the additive group $(\rr,+)$ by a (countable) group $G$ in the Borel framework.

\begin{thm}[Kanovei and Reeken, 2000]\label{thm:KRreals}
	For every group $G$, the only Borel definable group extension of $(\rr,+)$ by~$G$ is the trivial extension. In other words, every Borel cocycle $C \colon (\rr^2)^2 \rightarrow G$ is a Borel coboundary.
\end{thm}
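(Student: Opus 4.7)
The plan is to build a Borel $f \colon \rr \to G$ satisfying $C(x,y) = f(x) + f(y) - f(x+y)$ for all $x, y \in \rr$, by extracting $f$ from the Baire-category behaviour of the sections $y \mapsto C(x, y)$. The key observation is that $G$ is countable and discrete, so for each $x$ at most one value of $C(x, \cdot)$ can be attained on a non-meager (hence comeager) subset of $\rr$.

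Concretely, for each $x \in \rr$ every fibre $\{y : C(x,y) = g\}$ is Borel and so has the Baire property. Using Kuratowski-Ulam, the set $X_0 \subseteq \rr$ of $x$ for which a unique $g = f(x) \in G$ has $\{y : C(x,y) = g\}$ comeager is a comeager Borel set, and $f \colon X_0 \to G$ is Borel by standard category-theoretic computations on Borel sets. For $x_1, x_2 \in X_0$ with $x_1 + x_2 \in X_0$, I pick a $y$ lying in the intersection---comeager by translation-invariance of meagerness---of the four comeager sets $\{y : C(x_1, y) = f(x_1)\}$, $\{y : C(x_2, y) = f(x_2)\}$, $\{y : C(x_1+x_2, y) = f(x_1+x_2)\}$, and $\{y : C(x_1, x_2+y) = f(x_1)\}$. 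The cocycle identity
\[ C(x_1, x_2) + C(x_1 + x_2, y) = C(x_1, x_2 + y) + C(x_2, y) \]
then collapses to $C(x_1, x_2) = f(x_1) + f(x_2) - f(x_1 + x_2)$, so the coboundary equation holds on a comeager subset of $\rr^2$.

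The main obstacle will be the final step: extending $f$ Borel-measurably to all of $\rr$ and propagating the coboundary equation pointwise across the entire plane, not merely on a comeager set. For $x \in \rr$ the natural definition is $f(x) := C(x, y_0) + f(x+y_0) - f(y_0)$ with $y_0$ drawn from the comeager set $X_0 \cap (X_0 - x)$; a second cocycle-identity calculation, involving witnesses of the form $C(y_0, y_0' - y_0)$ and $C(x+y_0, y_0' - y_0)$, shows this is independent of the chosen $y_0$, and a measurable-selection theorem delivers a Borel selector $y_0(x)$. The delicate point, and the technical heart of the argument, is ensuring simultaneously that the extended $f$ remains Borel and that $C(x,y) = f(x) + f(y) - f(x+y)$ holds pointwise everywhere; this requires a careful interplay between the cocycle equation, translation-invariance of meagerness, and descriptive set-theoretic uniformization, which is precisely where Kanovei and Reeken's techniques are designed to intervene.
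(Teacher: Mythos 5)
Your proposal takes a genuinely different route from Kanovei--Reeken's (which this paper cites and then adapts): you try to read off the coboundary $f$ directly from the Baire-category behaviour of the sections $y \mapsto C(x,y)$ globally on $\rr$, whereas Kanovei--Reeken localise first by forcing, finding a single rational box $p\times q$ on which $C$ is constant on $M$-generic pairs, and then build the coboundary up in stages via constancy on equal sums, the cone $T$ (the analogue of $[M,\infty)$ on the line), and the auxiliary maps $F$ and $J$.

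There is, however, a real gap at the very start of your argument. You assert that ``at most one value of $C(x,\cdot)$ can be attained on a non-meager subset'' and that the unique non-meager fibre is therefore comeager. The second step is fine once the first is known (the complement of $A^x_g = \{y : C(x,y)=g\}$ is the countable union of the other fibres), but the first step is not justified and is not automatic: the fibres $A^x_g$ are pairwise disjoint Borel sets, and two disjoint sets with the Baire property can perfectly well both be non-meager. The cocycle identity gives, for $y'=y+h$,
\begin{equation*}
C(x,y+h) - C(x,y) = C(x+y,h) - C(y,h),
\end{equation*}
which ties the variation of $C(x,\cdot)$ to values of $C$ at shifted base points, but it does not immediately force a single value to dominate on a comeager set of $y$. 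This is precisely the difficulty Kanovei--Reeken sidestep by working inside one forcing condition rather than on all of $\rr$: on a condition $p$ one gets a single $g$ for all $M$-generic pairs, and one never needs a globally comeager section.

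In addition, you explicitly defer the final extension of $f$ from the comeager set $X_0$ to all of $\rr$, calling it ``the technical heart.'' That step is exactly where the serious work lies (in the paper it occupies the whole of the ``trivialising in stages'' and ``trivialising on a closed set'' machinery: \cref{lem:sum}, \cref{lem:allCases}, the cone $T$, the functions $F$ and $J$, and the Borel selector $\rho$). As it stands the proposal establishes the coboundary identity only for $x_1,x_2,x_1+x_2 \in X_0$, assuming the unproved uniqueness claim; it is an outline of a plausible strategy rather than a proof.
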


In \cref{sec:thm1,sec:final}, we prove the following theorem, extending Kanovei and Reeken's result.

\newtheorem*{thm:thm2}{\Cref{thm:myThm2}}
\begin{thm:thm2}
For every $N < \omega$ and every group $G$, the only Borel definable group extension of~$(\rr^N,+)$ by~$G$ is the trivial extension.
\end{thm:thm2}

The proof uses geometrical arguments, which on the one hand are harder to prove in the multidimensional case as opposed to the case $\rr$ in \cite{MR1841758}, while on the other hand translate throughout the higher-dimensional cases. As general ideas of the proof of \cref{thm:myThm2} are most easily explained in the case $N=2$, we prove the following result first:

\newtheorem*{thm:thm1}{\Cref{thm:myThm1}}
\begin{thm:thm1}
For every group $G$, the only Borel definable group extension of $(\rr^2,+)$ by~$G$ is the trivial extension.
\end{thm:thm1}

\subsection{The structure of this paper}

In \cref{sec:background} we outline the basics of group theory and the theory of group extensions needed for our arguments. We also recall the fundamentals of descriptive set theory which we later use to classify the degree of definability of group extensions.

As we will be using forcing arguments, we shall also consider its basic notions. However, these will not concern extending models of set theory, and hence remain gentle.

In \cref{sec:thm1,sec:final}, we prove theorems \ref{thm:myThm1} and \ref{thm:myThm2}, respectively. The proof of \cref{thm:myThm1} follows structurally the analogous argument in \cite{MR1841758}, yet requires more sophisticated arguments to overcome the geometrical differences between $\rr$ and $\rr^2$. The lemmas we prove, however, can then be extended to prove the theorem on $(\rr^N,+)$ for any $N > 2$. We show how to do this in \cref{sec:final}, where we also conclude \cref{thm:myThm2}.

We close with \cref{sec:open}, in which we state various open questions, and where we outline related avenues for research.

\section{Background}\label{sec:background}
The theory of group extensions of abelian groups by abelian groups can be expressed in two ways: \emph{category-theoretically} (i.e.\ in terms of short exact sequences, as indicated in \cref{sec:intro}) and in terms of \emph{group cohomology}, which captures the relationship between certain functions between groups: \emph{cocycles} and \emph{coboundaries}. The former approach has lead to the study of the \textsf{Ext} functor \cite{MR3467030}, which is of course of independent interest. In this paper, however, we take the latter approach. We do this since: if all groups concerned have a Borel structure, then the Borel complexity of cocycles---which are themselves functions between groups---can be measured and hence analysed.

\medskip

Let $A$ be a Borel group. A function $C \colon A^2 \rightarrow G$ is an \define{abelian 2-cocycle} (or just \define{abelian cocycle}) if $C(x,0) = 0$ and
\begin{align*}
	C(x,y) = C(y,x) \text{ \; and \; }
	C(x,y) + C(x+y,z) = C(x,z) + C(x+z,y)
\end{align*}
for all $x,y,z \in A$. An abelian cocycle is a \define{coboundary} if there exists a map~$h \colon A \rightarrow G$ such that
\[
	C(x,y) = C_h(x,y) = h(x) + h(y) - h(x + y).
\]
The quotient group of cocycles modulo coboundaries yields the \define{cohomology group} $H^2(A,G)$ \cite[9.2]{MR2455920}, which measures complexity: the more complicated~$H^2(A,G)$ is as a group, the more complicated is the structure of group extensions of $A$ by $G$. It is clear that two cocycles are \define{equivalent} if their difference is a coboundary\footnote{To see the explicit relationship between cocycles $C \colon A^2 \rightarrow G$ and group extensions of~$A$ by $G$ we recommend in particular Chapter 9 of \cite{MR3467030}. There, it is also shown how the notion of equivalence given here agrees with the commutative-diagram-equivalence from \cref{sec:intro}.}.

\begin{rem}
The definition of coboundaries hints at the relationship between the theory of group extensions and the aforementioned Hyers-Ulam-Rassias-stability framework of almost homomorphisms: in the sense of Ulam, coboundaries are trivial objects (well-behaved almost homomorphisms), just as they are trivial objects in the study of group cohomology (by definition of the quotient group).
\end{rem}

To explain our theorems, we need to introduce descriptive set theory. A topological space is \define{Polish} if it is separable and completely metrisable. The \emph{definable} subsets of an uncountable Polish space $X$ can be stratified in hierarchies. At the lowest level sits the \define{Borel hierarchy}, whose sets are generated by the $\sigma$-algebra containing the open subsets of $X$. A set is \define{Borel} if it appears in this hierarchy of length $\omega_1$. A function $F \colon X \rightarrow Y$ between Polish spaces is a \define{Borel function} if the pre-image of every Borel set is Borel. Hence, every continuous function is Borel. Further, a function between Polish spaces is Borel if and only if its graph is \emph{analytic}, i.e.\ the continuous image of a Borel set \cite[II.14.12]{MR1321597}. Since Polish spaces are separable and Hausdorff, they have a countable basis of \define{basic open sets}, and so every Borel set $A$ has a \define{Borel code}: an element of~$\omega^\omega$ which codes the construction of $A$ from the basic open sets and countably many operations of countable union and complementation. For details, see \cite{MR1321597,MR2731169}.

A simple method to determine whether a set of reals is Borel is provided by the fundamental relationship between descriptive set theory and computability theory (see references \cite{MR2526093,MR2455198,MR3381097}). The following folklore result is emblematic of this relationship. Some basic concepts follow: if $\Phi$ is a computable procedure (e.g.\ a Turing machine) and $x$ is a real (in $\rr$, $\cant$, $\baire$\ldots), let $\Phi\left(x^{(\alpha)}\right)$ denote the procedure $\Phi$ initiated with oracle access to the $\alpha$-th jump of $x$. If $x,z \in \cant$ then $x \oplus z$ is the join of $x$ and $z$. (The join of elements of $\rr$ is usually expressed as the join of their binary expansions. For classical computability theory, see e.g.\ \cite{MR882921}.)

\begin{prp}[{cf.\ \cite{dansPaper,Richter2024}}]\label{prp:BorelDefn}
	Let $X$ be uncountable Polish. A set $A \subseteq X$ is Borel if there exists a code $z \in \baire$, an ordinal\footnote{To the reader familiar with hyperarithmetic theory: $\alpha < \omega_1^{z}$, the first ordinal not computable from $z$.} $\alpha < \omega_1$, and a computable procedure $\Phi$ for which
	\[
		x \in A \iff \Phi\left((x \oplus z)^{(\alpha)}\right) \text{ halts in finite time.}
	\]
\end{prp}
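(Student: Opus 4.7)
The plan is to show that the computability-theoretic description given in the hypothesis forces $A$ to lie in the Borel hierarchy. Since every uncountable Polish space is Borel isomorphic to $\baire$ (Kuratowski), I may, after absorbing the isomorphism into $z$, assume $X = \baire$. Fix $z \in \baire$, $\alpha < \omega_1$, and $\Phi$ as in the hypothesis, and write $J_\alpha \colon \baire \to \baire$ for the map $x \mapsto (x \oplus z)^{(\alpha)}$. The key observation is that the halting relation is open in the oracle: the set $H = \{ y \in \baire : \Phi^y \text{ halts in finite time}\}$ is $\Sigma^0_1$, because a halting computation uses only a finite prefix of its oracle, and that prefix determines a basic open neighbourhood of $y$ inside $H$. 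Hence $A = J_\alpha^{-1}(H)$, and it suffices to prove that $J_\alpha$ is Borel measurable.

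I would proceed by transfinite induction on $\alpha$. The base case $J_0(x) = x \oplus z$ is continuous. For the successor step, the Turing jump $y \mapsto y'$ is itself Borel (indeed $\Sigma^0_2$-measurable), since the $n$-th bit of $y'$ records whether a fixed program halts with oracle $y$, a $\Sigma^0_1(y)$ condition; so if $J_\alpha$ is Borel then $J_{\alpha+1}$ is Borel as a composition of Borel maps. The preimage bookkeeping then shows that $J_\alpha^{-1}(H)$ climbs up the Borel hierarchy but always sits at some level below $\omega_1$, hence remains Borel.

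The main obstacle is the limit step, since $y^{(\lambda)}$ for a limit $\lambda$ is only well-defined relative to a choice of fundamental sequence for $\lambda$. For $\alpha < \ock{z}$, I would pick such a sequence uniformly from Kleene's $\oo^z$ and define $J_\lambda(x)$ as the effective join of $J_{\lambda_n}(x)$ along this sequence; countable joins of Borel maps are Borel, so $J_\lambda$ is Borel. For arbitrary $\alpha < \omega_1$, one first augments $z$ to $z \oplus w$ with $w \in \baire$ coding a well-ordering of $\omega$ of order type $\alpha$ (ensuring $\alpha < \ock{z \oplus w}$), reducing to the previous case relative to $z \oplus w$. Carrying this induction through yields that $J_\alpha$ is Borel, and combined with the first paragraph concludes that $A$ is Borel. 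The conceptual takeaway is that a halting query over an iterated-jump oracle is a Borel phenomenon whose hierarchy position is calibrated by the level of the jump; only the limit bookkeeping requires care.
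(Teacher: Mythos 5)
Your argument is correct, but it proves the opposite direction from the one the paper's own proof sketches, so the two are complementary rather than parallel. The proposition asserts ``$A$ is Borel if there exists $z,\alpha,\Phi$,'' and this is exactly what you establish: the halting set $H = \set{y}{\Phi^y \text{ halts}}$ is open by use of a finite oracle prefix, the iterated-jump map $J_\alpha(x) = (x\oplus z)^{(\alpha)}$ is Borel by transfinite induction (with the limit step tamed via a fundamental sequence drawn from $\oo^z$, or, for general $\alpha < \omega_1$, after augmenting $z$ by a code for a well-ordering of type $\alpha$), and hence $A = J_\alpha^{-1}(H)$ is Borel. This is also precisely the direction the paper later invokes in \cref{functionLDfn} to show $\rho^{-1}[q]$ is Borel. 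The paper's own proof, by contrast, sketches the \emph{converse} implication for the special case $\alpha=1$, $X = \rr$, $A$ open: it encodes a rational-interval decomposition $A = \bigcup (a_n,b_n)$ into $z$ and lets $\Phi$ search for an interval containing $x$, thereby producing a computational description from a Borel set. So what is stated as an ``if'' is evidently intended as an iff-characterization; you supply the ``description implies Borel'' half (the one actually needed downstream), while the paper illustrates the ``Borel implies description'' half. One small gloss in your reduction: passing from general uncountable Polish $X$ to $\baire$ by absorbing a Borel isomorphism into $z$ requires that a concrete presentation of $X$ be fixed for the join $x\oplus z$ to make sense at all; the paper's convention of joining binary expansions of reals handles this for the $X = \rr^N$ actually used, so the point is cosmetic, but it is worth being explicit that the statement presupposes an effectively presented Polish space rather than an abstract one.
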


\begin{proof}
	To elucidate the intuition, we sketch the proof of the case $\alpha = 1$ for $X = \rr$. If $A$ is open, write $A = \bigcup (a_n,b_n)$ where $a_i,b_i \in \qq$. This sequence of pairs is coded by some $z$ (via e.g.\ the Cantor pairing function). For each pair $(a_n,b_n)$ coded in $z$, ask whether $a_n < x < b_n$. This is a finite operation since each interval $(a_n,b_n)$ is open.
\end{proof}

\subsection{Algebraic properties of abelian cocycles}\label{sec:identities}
In the subsequent arguments, we generally separate \emph{algebraic properties} of abelian cocycles from \emph{non-algebraic} properties.

As the name suggests, algebraic properties hold for all abelian cocycles; the underlying space and its topology are inconsequential. On the contrary, non-algebraic properties depend on the underlying topological and order-theoretical properties. For instance, the reals $\rr$ are ordered, while $\rr^2$ is not. Hence, any proof of \cref{thm:KRreals}, if adapted to $\rr^2$, must take care of such structural differences.

The difficulties of extending \cref{thm:KRreals} reduce to proving topological lemmas which permit the application of algebraic properties. This is our task in the following section. In this section, we collect useful algebraic properties of abelian cocycles---all can be found in \cite{MR1841758}. Fix an uncountable group $X$ and a group $G$ and let $C \colon X^2 \rightarrow G$ be an abelian cocycle.

\begin{lem}\label{algProp1}
	For all $x_1,x_2,x_3 \in X$ we have
	\[
		C(x_1,x_2) = C(x_1,x_3) + C(x_1 + x_3,x_2) - C(x_1 + x_2, x_3).
	\]
\end{lem}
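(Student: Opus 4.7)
The identity we need to establish is in fact a direct algebraic rearrangement of the defining cocycle equation
\[
C(x,y) + C(x+y,z) = C(x,z) + C(x+z,y),
\]
which holds for all $x,y,z \in X$ by hypothesis. The plan is simply to instantiate this identity with the appropriate choice of variables and then solve for $C(x_1,x_2)$.

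Concretely, I would substitute $x \mapsto x_1$, $y \mapsto x_2$, $z \mapsto x_3$ into the cocycle equation to obtain
\[
C(x_1,x_2) + C(x_1+x_2,x_3) = C(x_1,x_3) + C(x_1+x_3,x_2),
\]
and then isolate $C(x_1,x_2)$ on the left-hand side by subtracting $C(x_1+x_2,x_3)$ from both sides. This yields exactly
\[
C(x_1,x_2) = C(x_1,x_3) + C(x_1+x_3,x_2) - C(x_1+x_2,x_3),
\]
as required.

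There is no real obstacle here: neither the symmetry condition $C(x,y) = C(y,x)$ nor the normalisation $C(x,0) = 0$ is needed, and no appeal to the abelian structure of $X$ beyond what is already built into the cocycle identity is required. The lemma is a purely syntactic reformulation of the cocycle condition, and its role in the paper is presumably notational convenience, allowing later arguments to express $C(x_1,x_2)$ in terms of values of $C$ involving a chosen auxiliary element $x_3$.
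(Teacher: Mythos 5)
Your proof is correct and is the obvious one-line derivation: instantiate the cocycle identity $C(x,y) + C(x+y,z) = C(x,z) + C(x+z,y)$ at $(x,y,z) = (x_1,x_2,x_3)$ and move $C(x_1+x_2,x_3)$ to the right-hand side. The paper states this lemma without proof (citing Kanovei--Reeken), and your argument is exactly the intended one.
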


Cocycles can be extended naturally to accept more arguments: let
\begin{align*}
	C(x_1,x_2,x_3) &= C(x_1,x_2) + C(x_1 + x_2,x_3)\\
\intertext{and hence recursively define}
	C(x_1,\ldots,x_{n+1}) &= C(x_1,\ldots,x_n) + C(x_1 + \ldots + x_n, x_{n+1}).
\end{align*}
Various useful identities can be deduced from this recursive definition, such as the following.

\begin{lem}
	If $x_1,\ldots,x_n,x_{n+1},\ldots,x_m \in X$, let $x = x_1 + \ldots + x_n$ and $x' = x_{n+1} + \ldots + x_m$. Then
	\[
		C(x_1,\ldots,x_m) = C(x_1,\ldots,x_n) + C(x_{n+1},\ldots,x_m) + C(x,x').
	\]
\end{lem}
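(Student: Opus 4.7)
The plan is to proceed by induction on $k := m - n$, the number of summands in $x'$. In the base case $k = 2$, expanding $C(x_1,\ldots,x_{n+2})$ via the recursive definition twice and comparing with the right-hand side reduces the claim to the \emph{symmetric} cocycle identity
\[
C(x, x_{n+1}) + C(x + x_{n+1}, x_{n+2}) = C(x_{n+1}, x_{n+2}) + C(x, x_{n+1} + x_{n+2}).
\]
This is a routine consequence of the two axioms on $C$: apply the cocycle equation $C(a,b)+C(a+b,c)=C(a,c)+C(a+c,b)$ with $(a,b,c)=(x_{n+1}, x, x_{n+2})$, and then use the symmetry $C(u,v)=C(v,u)$ to rearrange all three of the resulting pairs.

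For the inductive step from $k$ to $k+1$, I would peel off the final argument $x_m$ on both sides using the recursive definition, giving $C(x_1,\ldots,x_m) = C(x_1,\ldots,x_{m-1}) + C(x+x'', x_m)$ and $C(x_{n+1},\ldots,x_m) = C(x_{n+1},\ldots,x_{m-1}) + C(x'', x_m)$, where $x'' := x_{n+1}+\cdots+x_{m-1}$. Applying the inductive hypothesis to $C(x_1,\ldots,x_{m-1})$, whose second group now has size $k$, and cancelling the common term $C(x_1,\ldots,x_n) + C(x_{n+1},\ldots,x_{m-1})$ on both sides, the identity collapses to precisely the symmetric cocycle identity used in the base case, now applied to $(x, x'', x_m)$ in place of $(x, x_{n+1}, x_{n+2})$.

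The entire argument is elementary; the only real content is the symmetric form of the cocycle equation, which is immediate from the two axioms defining abelian cocycles. I do not anticipate any genuine obstacle beyond careful bookkeeping of which variables are being grouped together when the inductive hypothesis is invoked.
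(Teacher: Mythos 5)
Your argument is correct, and since the paper gives no proof of this lemma (it simply cites Kanovei--Reeken \cite{MR1841758} for such identities), it supplies exactly the induction the reader is expected to reconstruct from the recursive definition. The key point---that both the base case and the inductive step reduce, after expanding via the recursive definition and cancelling common terms, to the single symmetrised cocycle identity
\[
C(a,b) + C(a+b,c) = C(b,c) + C(a,b+c),
\]
which is \cref{algProp1} combined with symmetry---is identified cleanly. One small bookkeeping remark: in the base case only two of the four pairs actually need to be flipped by symmetry (the pair $C(x_{n+1}+x,\,x_{n+2})$ is already in the form $C(x+x_{n+1},\,x_{n+2})$ since addition is commutative); and implicitly you are taking $n \geq 2$ so that $C(x_1,\ldots,x_n)$ is defined by the recursion, which is the reading the paper clearly intends. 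Neither affects correctness.
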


Recall that a cocycle $C$ is a coboundary if there exists a map $h \colon X \rightarrow G$ for which
\[
	C(x_1,x_2) = h(x_1) + h(x_2) - h(x_1 + x_2).
\]
A simple induction proves:

\begin{lem}\label{alg1}
	$C_h(x_1,\ldots,x_n) = h(x_1) + \ldots + h(x_n) - h(x_1 + \ldots + x_{n})$
\end{lem}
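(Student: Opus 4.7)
The plan is to argue by induction on $n \geq 2$, leveraging both the recursive definition of $C(x_1,\ldots,x_{n+1})$ given just above the statement and the defining identity $C_h(x,y) = h(x) + h(y) - h(x+y)$ of a coboundary on two arguments.

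For the base case $n = 2$, the identity is exactly the definition of $C_h$ being a coboundary, so nothing needs to be checked. For the inductive step, assume the formula holds at stage $n$. I would then expand
\[
    C_h(x_1,\ldots,x_{n+1}) = C_h(x_1,\ldots,x_n) + C_h(x_1 + \ldots + x_n,\, x_{n+1})
\]
using the recursive definition, apply the inductive hypothesis to the first summand on the right, and apply the base case to the second summand (treating $x_1 + \ldots + x_n$ as a single element of $X$). The term $+h(x_1 + \ldots + x_n)$ arising from the base case then cancels with the $-h(x_1 + \ldots + x_n)$ coming from the inductive hypothesis, leaving precisely $h(x_1) + \ldots + h(x_{n+1}) - h(x_1 + \ldots + x_{n+1})$.

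There is no genuine obstacle here: the computation is a telescoping cancellation, and this is exactly what is meant by the paper's claim that the identity follows by ``a simple induction''. The only mild subtlety is notational, namely being careful that $C_h$ on two arguments in the second summand of the expansion genuinely equals the defining expression in terms of $h$; but this is immediate from the definition of a coboundary applied to the pair $(x_1 + \ldots + x_n,\, x_{n+1})$.
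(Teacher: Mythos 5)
Your induction is correct and is precisely the ``simple induction'' the paper alludes to without spelling out: base case from the definition of $C_h$, inductive step from the recursive extension of cocycles to more arguments, with the telescoping cancellation of $h(x_1+\ldots+x_n)$. Nothing further is needed.
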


Further, since $H^2(X,G)$ is a group it immediately follows that:

\begin{lem}\label{lastAlgPrp}
	If $C_h$ and $C_k$ are coboundaries, then $C_h + C_k = C_{h+k}$, where $(h+k)(x) = h(x) + k(x)$. In particular, if $C_h$ and $C_k$ are Borel coboundaries (i.e.\ $h$ and $k$ are Borel maps), then so is~$C_{h+k}$.
\end{lem}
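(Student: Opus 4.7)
The statement is essentially a direct unfolding of the definitions together with the observation that pointwise sums of Borel maps into a Borel group remain Borel. My plan is therefore to split the lemma into its two assertions and handle them in turn.

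For the first assertion, I would simply compute $(C_h + C_k)(x_1,x_2)$ from the defining formula $C_h(x_1,x_2) = h(x_1) + h(x_2) - h(x_1+x_2)$ (and the analogous one for $k$). Rearranging the six resulting terms using commutativity of $G$, which is available because we are working exclusively with abelian groups, immediately groups them as $(h+k)(x_1) + (h+k)(x_2) - (h+k)(x_1+x_2)$. This is precisely $C_{h+k}(x_1,x_2)$, yielding the identity $C_h + C_k = C_{h+k}$. No induction or use of the more elaborate identities from \Cref{algProp1} onwards is required; the claim is really about the bilinear dependence of $C_h$ on $h$.

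For the Borel part, I would argue that $h+k$, viewed as the composition $x \mapsto (h(x),k(x)) \mapsto h(x)+k(x)$, is Borel: the pairing $x \mapsto (h(x),k(x))$ is Borel because both coordinate maps are, and the group operation on $G$ is a Borel map by \Cref{dfn:borelDfn}. Hence $h+k \colon X \to G$ is Borel, and by the already established equality $C_{h+k} = C_h + C_k$ together with the fact that $C_{h+k}$ is the coboundary induced by the Borel map $h+k$, it is itself a Borel coboundary.

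I do not anticipate any genuine obstacle here; the only thing worth stating carefully is that we are tacitly using the abelian hypothesis on $G$ (to rearrange the six summands) and the Borel-group hypothesis on $G$ (to conclude that $h+k$ is Borel). Both are part of the standing conventions established in the paper, so the verification is immediate.
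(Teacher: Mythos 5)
Your proof is correct and matches the paper's intent: the paper simply asserts the lemma as an immediate consequence of the group structure of $H^2(X,G)$ and gives no computation, and your direct six-term rearrangement (using that $G$ is abelian) together with the observation that $h+k$ factors through the Borel group operation is exactly the routine verification the paper leaves implicit.
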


\subsection{The set-theoretical setup}\label{subsec}

We work in a set-theoretical universe $V$ satisfying $\ZFC$. For a sufficiently rich finite fragment $\ZFC^*$ of $\ZFC$, consider a countable transitive set model~$M \vDash \ZFC^*$. Meta-theoretically, our arguments proceed as follows: with a Borel cocycle $C$ and $M$ fixed, we work inside $V$ and argue that $C$ is in fact a Borel coboundary. We often allude to the fact that truth in a sufficiently rich extension $M[G]$ of $M$ is forced by some condition (read open set) in $M$: there is a comeagre set \emph{in $V$} on which the function is constant, for instance. For suitable $G$, this gives us a result on $V$, hence proving that $C$ enjoys said properties \emph{in the real world}. The fact that Borel functions have codes whose interpretations are absolute between transitive models is crucial for this argument to work.

As is customary, when we write ``let $M$ be a countable transitive model of $\ZFC$'' we mean $M$ to satisfy a finite fragment of $\ZFC$ sufficiently large for the arguments at hand.

\medskip

In view of proving \cref{thm:myThm1}, we now consider the specific case $A = (\rr^2,+)$. We also write $\rr^2$ in place of $(\rr^2,+)$ to save time.

\medskip

While the following set-theoretical setup is expressed in terms of $\rr^2$, it should be clear that it in fact holds for $\rr^N$ for all $N < \omega$. We work in $V$ and fix a countable group $G$. Suppose $C \colon (\rr^2)^2 \rightarrow G$ is a Borel function. Let $z \in \baire$ be a Borel code for $C$. Now consider a fixed countable transitive model $M$ of~$\ZFC$ such that $\{ z,G \} \subset M$.

Let $\pp_2 = \set{(q,q') \times (r,r')}{q,q',r,r' \in \qq}$, our version of Cohen-forcing for pairs. As always, $p = I \times I' \leq q = J \times J'$ (i.e.\ $p$ is stronger than $q$) if and only if $I \subseteq J$ and $I' \subseteq J'$. Further, define $\pp$ to be the classical Cohen forcing whose conditions are open sets $(q,r)$ for $q,r \in \qq$. Then $\pp \times \pp = \pp_2$ as sets, and also as forcing notions, as the following classical theorem shows.

\begin{thm}[{\cite{MR756630}}]
	A filter $G_0 \times G_1$ is $M$-generic for $\pp_2$ iff one of the following holds:
	\begin{enumerate}
		\item $G_0$ is $M$-generic for $\pp$ and $G_1$ is $M[G_0]$-generic for $\pp$.
		\item $G_1$ is $M$-generic for $\pp$ and $G_0$ is $M[G_1]$-generic for $\pp$.
	\end{enumerate}
\end{thm}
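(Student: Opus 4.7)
The plan is to establish the symmetric equivalence that $G_0 \times G_1$ is $M$-generic for $\pp_2$ if and only if $G_0$ is $M$-generic for $\pp$ and $G_1$ is $M[G_0]$-generic for $\pp$. From this equivalence, condition $(1)$ of the theorem implies condition $(2)$ (and vice versa) by the symmetric role of the two coordinates, so the stated biconditional follows at once.

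For the forward direction, assume $G_0 \times G_1$ is $M$-generic. To see $G_0$ is $M$-generic for $\pp$, observe that for any dense $D \subseteq \pp$ with $D \in M$, the product $D \times \pp$ is dense in $\pp_2$ and lies in $M$, so meeting it with $G_0 \times G_1$ produces an element of $D \cap G_0$. For the harder claim that $G_1$ is $M[G_0]$-generic, take any dense $D \in M[G_0]$ with $\pp$-name $\dot{D} \in M$, and consider
\[
    E = \{(p,q) \in \pp_2 : p \Vdash q \in \dot{D}\}.
\]
Then $E \in M$ since the forcing relation for $\pp$ is definable in $M$, and $E$ is dense in $\pp_2$ because every $p_0 \in \pp$ forces $\dot{D}$ to be dense in $\pp$: given $(p_0,q_0)$, extend $p_0$ to some $p_1$ deciding the name so that $p_1 \Vdash q_1 \in \dot{D}$ for some $q_1 \leq q_0$. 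Intersecting $E$ with $G_0 \times G_1$ yields $(p,q)$ with $p \in G_0$ and $p \Vdash q \in \dot{D}$, whence $q \in G_1 \cap D$.

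For the converse, suppose $G_0$ is $M$-generic and $G_1$ is $M[G_0]$-generic. Then $G_0 \times G_1$ is automatically a filter on $\pp_2$, so only genericity needs checking. Given dense $E \in M$ with $E \subseteq \pp_2$, define in $M$ the $\pp$-name $\dot{D}$ whose realisation is
\[
    \dot{D}^{G_0} = \{q \in \pp : (\exists p \in G_0)\; (p,q) \in E\}.
\]
Density of $E$ in $\pp_2$ implies that for each $q_0 \in \pp$, the set $\{p \in \pp : (\exists q \leq q_0)\; (p,q) \in E\}$ is dense in $\pp$ in $M$. Hence by $M$-genericity of $G_0$ the realisation $\dot{D}^{G_0}$ is dense in $\pp$ in $M[G_0]$, and $M[G_0]$-genericity of $G_1$ then produces some $q \in G_1 \cap \dot{D}^{G_0}$, i.e.\ a witness $(p,q) \in (G_0 \times G_1) \cap E$. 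The main obstacle is not conceptual but bookkeeping-related: carefully verifying that the auxiliary set $E$ and the name $\dot{D}$ lie in $M$, and that densities transfer correctly between $\pp$ and $\pp_2$; beyond this, the proof is standard product-forcing.
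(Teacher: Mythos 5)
The paper cites this product-forcing lemma to \cite{MR756630} without giving a proof, so there is no paper proof to compare against; I assess your argument on its own.

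Your converse direction is correct and is the standard argument: $D = \dot{D}^{G_0}$ lies in $M[G_0]$, its density follows from $M$-genericity of $G_0$ applied to the dense sets $\{p : (\exists q \leq q_0)(p,q)\in E\}$, and $M[G_0]$-genericity of $G_1$ then produces a witness in $E$.

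The forward direction has a genuine gap. You assert that $E = \{(p,q) : p\Vdash q\in\dot{D}\}$ is dense in $\pp_2$ \emph{because} ``every $p_0\in\pp$ forces $\dot{D}$ to be dense.'' This is false in general: $\dot{D}$ is an arbitrary name for a set that happens to be dense in $M[G_0]$, but along other generics $\dot{D}$ may evaluate to something not dense, and below conditions outside $G_0$ there may be no $q_1\leq q_0$ with $p_1\Vdash q_1\in\dot{D}$. Consequently $E$ need not be dense in $\pp_2$, only dense below $(p^*,1)$ for any $p^*\in G_0$ that forces $\dot{D}$ to be dense. There are two standard ways to patch this. Either observe that an $M$-generic filter meets every $E\in M$ dense below some condition already in the filter (apply this with $E\cup\{(p,q):p\perp p^*\}$), or replace $\dot{D}$ by the name $\dot{D}''$ such that $1\Vdash\bigl(\dot{D}''=\dot{D}$ if $\dot{D}$ is dense, else $\dot{D}''=\pp\bigr)$; since $p^*\Vdash\dot{D}$ dense and $p^*\in G_0$, we still have $\dot{D}''^{G_0}=D$, and now $1\Vdash\dot{D}''$ dense, so your density argument for $E$ becomes valid. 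With either fix, the argument goes through as you intend. The rest of the proof is fine.
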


\begin{cor}\label{prodForce}
	Forcing with $\pp_2 \times \pp_2$ is equivalent to forcing with the product~$\pp \times \pp \times \pp \times \pp$, which is equivalent to iterating classical Cohen forcing four times over $M$.
\end{cor}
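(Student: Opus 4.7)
The proof is essentially a bookkeeping exercise on top of the preceding theorem; no genuinely new forcing input is required. The plan is as follows.

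First I would unpack the definitions to show $\pp_2$ and $\pp \times \pp$ agree as forcing notions. A condition in $\pp_2$ is a rectangle $(q,q') \times (r,r')$, which is literally specified by the pair of rational intervals $\langle (q,q'),(r,r')\rangle$, and the order on $\pp_2$ is componentwise inclusion of intervals. So the identity map is an isomorphism of partial orders $\pp_2 \iso \pp \times \pp$. Taking the product of two copies and reordering coordinates gives $\pp_2 \times \pp_2 \iso \pp \times \pp \times \pp \times \pp$, which handles the first equivalence.

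Next, to turn the fourfold product into a fourfold iteration, I would apply the preceding theorem three times in succession. Given an $M$-generic filter $H \subset \pp^4$, write $H = H_0 \times (H_1 \times H_2 \times H_3)$ and regard the second factor as living inside a copy of $\pp \times \pp_3$ (or iterate by grouping the last three into $\pp_2 \times \pp$, etc.). Since the preceding theorem characterises genericity for a two-factor product, the first projection $H_0$ is $M$-generic for $\pp$ and the remaining block is $M[H_0]$-generic. Inside $M[H_0]$ the same theorem peels off $H_1$ as a $\pp$-generic, leaving $H_2 \times H_3$ which is $M[H_0][H_1]$-generic, and one more application extracts $H_2$ and $H_3$. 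Reading the chain off yields
\[
    M[H] \;=\; M[H_0][H_1][H_2][H_3],
\]
a four-step iteration of classical Cohen forcing. Conversely, any such iterated extension reassembles into a $\pp^4$-generic over $M$ by the same theorem applied in reverse.

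The only subtle point---and hence the closest thing to an obstacle---is clarifying what ``equivalent'' means: here it is the standard forcing-theoretic sense that the two notions yield the same class of $M$-generic extensions (equivalently, that their completions are isomorphic Boolean algebras). Once this is fixed, the preceding theorem supplies both directions of the correspondence at each splitting step, and the general statement follows by a routine induction on the number of factors.
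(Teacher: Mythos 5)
Your proposal is correct and matches the paper's implicit reasoning: the corollary is stated without proof as an immediate consequence of the preceding product lemma, and your unwinding---identifying $\pp_2$ with $\pp\times\pp$ as posets, regrouping $\pp_2\times\pp_2$ as $\pp^4$, and peeling off one Cohen factor at a time---is exactly what is intended. The one point worth flagging is that the quoted theorem is phrased literally for $\pp\times\pp$, whereas your inductive step applies it to products such as $\pp\times(\pp\times\pp\times\pp)$; this is harmless because the product lemma in the cited source holds for arbitrary two-factor products (and, independently, $\pp^{k}\iso\pp$ as a countable atomless separative poset), but a sentence acknowledging which of these facts you are invoking would make the induction airtight.
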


After forcing with $\pp_2 \times \pp_2$, we denote the resultant $M$-generic tuple by $(\alpha_0,\alpha_1,\alpha_2,\alpha_3)$, where \cref{prodForce} implies that $\alpha_0$ is $M$-generic, $\alpha_1$ is $M[\alpha_0]$-generic, $\alpha_2$ is $M[\alpha_0,\alpha_1]$-generic, and so forth.

\begin{lem}\label{lem:afterMe}
	There exist $p = I \times J$ and $q = I' \times J'$ and $g \in G$ such that $\diam(q) > \diam(p)$, $I$ lies in the first quadrant, and if $(\alpha_0,\alpha_1,\alpha_2,\alpha_3) \in p \times q$ is $M$-generic then $C((\alpha_0,\alpha_1),(\alpha_2,\alpha_3)) = g$. 
\end{lem}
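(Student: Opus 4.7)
The plan is the standard forcing-decision argument, carried out so that the stated geometric constraints on $(p,q)$ are enforced in an order that prevents them from conflicting.

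First, I pick an initial condition already inside the first quadrant: take $p_1 = (0,1) \times (0,1) \in \pp_2$ and any $q_1 \in \pp_2$. Because $C$ is Borel with code $z \in M$ and $G \in M$ is countable, the set of conditions in $\pp_2 \times \pp_2$ that decide the value $C((\dot\alpha_0,\dot\alpha_1),(\dot\alpha_2,\dot\alpha_3))$ is dense in $\pp_2 \times \pp_2$. So there is some $(p_0, q_0) \leq (p_1, q_1)$ and some $g \in G$ with
\[
(p_0, q_0) \Vdash C((\dot\alpha_0,\dot\alpha_1),(\dot\alpha_2,\dot\alpha_3)) = \check g,
\]
and by construction $p_0 \subseteq p_1 \subseteq (0,\infty)^2$, so $p_0$ lies in the first quadrant.

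Second, I impose the diameter constraint by shrinking $p_0$ only. Choose a rational sub-rectangle $p \subseteq p_0$ with $\diam(p) < \diam(q_0)$ (such a $p$ trivially exists, since sub-rectangles of arbitrarily small diameter are available below any rational rectangle), and set $q = q_0$. Then $(p,q) \leq (p_0, q_0)$ still forces $C = \check g$, while simultaneously $p$ lies in the open first quadrant and $\diam(q) > \diam(p)$.

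Finally, I translate this into the generic formulation stated in the lemma. For any $M$-generic tuple $(\alpha_0,\alpha_1,\alpha_2,\alpha_3) \in p \times q$, the filter it generates contains $(p,q)$, so by the forcing theorem $C((\alpha_0,\alpha_1),(\alpha_2,\alpha_3)) = g$ holds in the generic extension; absoluteness of the Borel code $z$ for $C$ between transitive models lifts this equation to $V$.

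The only subtlety is that the three demands on $(p,q)$---that it decide $C$, that it lie in the first quadrant, and that it satisfy $\diam(q) > \diam(p)$---can appear to pull in different directions, since forcing a decision on $C$ may shrink $q$ drastically and destroy an intended diameter inequality. This is handled by the ordering of the operations: the quadrant constraint is locked in at the start via $p_1$, the decision about $C$ only shrinks within the first quadrant, and the diameter inequality is secured last by shrinking $p$ while leaving $q = q_0$ untouched.
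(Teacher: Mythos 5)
Your proposal is correct and follows essentially the same route as the paper: fix a condition inside the first quadrant, use density of conditions deciding the (absolute, via the Borel code $z$) value of $C$ to obtain a deciding condition, then shrink the first component to secure the diameter inequality. The paper states the quadrant and diameter steps more tersely ("we may assume\dots", "of course, we can assume\dots"); your version simply makes the order of operations explicit so the three constraints visibly do not conflict.
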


In other words, some condition forces $C$ to be constant on all generics inside it. For notation, we say $D(I \times J) < D(I' \times J')$ if $I \subsetneq I'$ and $J \subsetneq J'$.

\begin{proof}
	Firstly, note that being a Borel code is $\Pi_1^1$ \cite[II.25.44]{MR1940513}, and hence absolute by Shoenfield absoluteness \cite[II.25.20]{MR1940513}. Therefore, the interpretation $C^M$ of $z$ in $M$ is a total function from $\rr^M$ to $G$ (recalling that $G = \set{g_i}{i < \omega} \in M$), and this fact is forced by the empty condition. Hence we may assume $p$ extends some condition in the first quadrant.
	
	Let $(\dot{\alpha}_0,\dot{\alpha}_1,\dot{\alpha}_2,\dot{\alpha}_3)$ be a name for the $M$-generic. By the fundamental theorem of forcing, there exists a condition $p \Vdash C(\dot{\alpha}_0,\dot{\alpha}_1,\dot{\alpha}_2,\dot{\alpha}_3) = \check{g}_i$ for some $i < \omega$. Write $p = I \times J \times I' \times J'$. Of course, we can assume $D(I \times J)$ to be smaller than $D(I' \times J')$. By \cref{prodForce}, this suffices.
\end{proof}

From now on, with $M$ fixed, we introduce the following notation.

\begin{dfn}
If $p \in \pp$ let $p^{*}$ denote the set (in $V$) of $M$-generics contained in the condition~$p$. We also relativise: if $x$ is a real then $p^{*[x]}$ denotes the set of $M[x]$-generics contained in $p$.
\end{dfn}

\begin{rem}
	A real is $M$-generic if and only if it intersects every dense open subset of $\pp$ that is an element of $M$. Since every open set in $\pp$ (and hence in $\pp_2$, etc.) has a Borel code (since $\rr^n$ is Polish), the following observation is immediate: since $\rr$ is Baire, the class of $M$-generics for $\pp$ is a comeagre subset of $\rr$ \emph{as viewed in $V$}. Hence, every condition $p$ contains a comeagre-in-$p$ set of $M$-generics.
\end{rem}

\section{The Case $N = 2$}\label{sec:thm1}

In this section, we prove that every Borel definable group extension of~$(\rr^2,+)$ by a group $G$ can be trivialised by a Borel coboundary. Let
	\[
		C \colon (\rr^2)^2 \rightarrow G
	\]
be a Borel cocycle. We argue in a sequence of lemmas which eventually reduces $C$ to a sum of Borel coboundaries.

\medskip

Initially, we follow closely the structure laid out in \cite{MR1841758}, and the lemmas of algebraic type carry over from their work. As an example, the base case in \cref{lem:sum} is virtually identical to their arguments. However, the lemmas needed for our $2$-dimensional (and beyond) proof, viz.\ \cref{usefulLemma} and \cref{lem:allCases}, are more technical due to the topological and geometrical differences between $\rr$ and $\rr^2$. This is particularly emphasised in \cref{sec:final}, where we prove the general case $N > 2$.

\subsection{Geometrical complications} In the process of trivialising the Borel cocycle $C$, it is our goal to show that $C$ is well-behaved on large and well-behaved (e.g.\ closed) subsets of $\rr^2$. We usually argue via generics (as there is a large---in fact comeagre---set of them in $V$) and show that $C$ is well-behaved on generics first; then we reduce to non-generics.

The following \cref{lem:sum}---an adaptation of Lemma 52 in \cite{MR1841758}---is the first step in this reduction process. We break it down into two cases:

\begin{lem}\label{lem:sum}
	Suppose $n \in \{ 2,3 \}$, and assume that $x_1,y_1,\ldots,x_n,y_n \in p^*$ satisfy $\sum_{i\leq n} x_i = \sum_{i \leq n} y_i$. Then $C(x_1,\ldots,x_n) = C(y_1,\ldots,y_n)$.
\end{lem}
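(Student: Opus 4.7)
The plan is to use \cref{algProp1} together with \cref{lem:afterMe} and an auxiliary generic parameter. \cref{lem:afterMe} provides that $C$ takes a constant value $g \in G$ on every $M$-generic pair lying in $p \times q$, so the strategy is to re-express both $C(x_1,\ldots,x_n)$ and $C(y_1,\ldots,y_n)$ as signed sums of cocycle values of pairs which, after possibly using the symmetry $C(a,b) = C(b,a)$, each lie in $p \times q$ and are $M$-generic. Terms depending only on the common sum $\sum x_i = \sum y_i$ will cancel between the two sides, and the remaining terms will all evaluate to $g$.

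For the base case $n = 2$, I would apply \cref{algProp1} with a fresh third argument $z$ to obtain
\[
C(x_1, x_2) = C(x_1, z) + C(x_1 + z, x_2) - C(x_1 + x_2, z),
\]
and the analogous identity for $C(y_1, y_2)$. Since $x_1 + x_2 = y_1 + y_2$, the terms $C(x_1 + x_2, z)$ and $C(y_1 + y_2, z)$ cancel upon subtraction. I would then choose $z$ to be $M[x_1,x_2,y_1,y_2]$-generic for $\pp_2$ and lying in a $\pp_2$-subcondition $q'' \subseteq q$ arranged so that $p + q'' \subseteq q$; this is possible, after a harmless initial refinement of $p$, because $p \subsetneq q$ strictly in each coordinate. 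Applying \cref{lem:afterMe}—using symmetry for the pairs $(x_1+z, x_2)$ and $(y_1+z, y_2)$ so that the larger argument sits on the right—then makes each of the four surviving cocycle values equal to $g$, and the difference vanishes.

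For $n = 3$, I would iterate the same expansion. Writing $C(x_1, x_2, x_3) = C(x_1, x_2) + C(x_1 + x_2, x_3)$ and applying \cref{algProp1} to each summand with the same auxiliary $z$ yields
\[
C(x_1, x_2, x_3) = C(x_1, z) + C(x_1 + z, x_2) + C(x_1 + x_2 + z, x_3) - C(S, z),
\]
where $S$ is the common total sum; the analogous identity for $y$ has the same $C(S, z)$ tail, which cancels on subtraction. The resulting six remaining cocycle values can be dispatched by \cref{lem:afterMe} via the same recipe, provided $q''$ is chosen so that not only $p + q''$ but also $2p + q''$ is contained in $q$.

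The main obstacle is this geometric placement of $q''$: we need a single $\pp_2$-subcondition $q''$ for which $q'' \subseteq q$ and every partial sum $r$ drawn from the $x_i, y_i$ satisfies $r + q'' \subseteq q$. In $\rr$ this reduces to a single interval-width inequality, easily arranged by shrinking the original condition; in $\rr^2$ the inequality must hold in both coordinates simultaneously, so we may be forced to shrink $p$ (and hence adjust $q''$) to ensure $\diam(q) > n \cdot \diam(p)$ coordinate-wise. This coordinate-wise constraint is where the higher-dimensional geometry first bites, and is presumably what the authors have in mind when they flag the topological arguments as more technical than in \cite{MR1841758}.
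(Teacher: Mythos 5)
Your approach differs from the paper's at both stages, and in the end it does not close. For $n=2$ the paper takes \emph{two} auxiliary points $z$ and $z' = z + (y_1 - x_1)$, both inside a small subcondition of $p$, expands the four-argument cocycle $C(x_1,x_2,z,z')$ (and its $y$-analogue) in two ways, and exploits the fact that with $z'$ chosen this way the unevaluated term $C(x_1+z',x_2+z)$ is literally the same as $C(y_1+z,y_2+z')$. Only the pairs $(x_1,z'), (x_2,z), (y_1,z), (y_2,z')$ need to evaluate to $g$ via \cref{lem:afterMe}, and each of these has \emph{both} components in $p$, which is exactly the position \cref{lem:afterMe} accommodates once $p \subsetneq q$. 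For $n=3$ the paper then applies \cref{usefulLemma} to shift into a configuration where one $x'_i$ equals $y'_i$ and reduces to the $n=2$ case via the special case. Your plan instead uses a single $z$, cancels the $C(S,z)$ tails, and then tries to force \emph{every} remaining term, including $C(x_1+z,x_2)$ and, for $n=3$, $C(x_1+x_2+z,x_3)$, to equal $g$.

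This is where the proposal fails, and the failure is geometric. To get $C(x_1+z,x_2) = g$ you need (after using $C(a,b)=C(b,a)$) the point $(x_2,x_1+z)$ to be an $M$-generic element of $p \times q$; since $x_2 \in p$ this forces $x_1 + z \in q$, i.e.\ $z \in q - x_1$. But the term $C(x_1,z)$ already constrains $z \in q$ (or $z \in p \subset q$ via symmetry). So you need $z \in q \cap (q - p)$. The inclusion $p \subsetneq q$, which is all \cref{lem:afterMe} guarantees, does \emph{not} give $q \cap (q - p) \neq \emptyset$: with $p = (10,11)^2$ and $q = (10,12)^2$ one has $q - p = (-1,2)^2$, disjoint from $q$. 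Your ``harmless initial refinement of $p$'' makes this worse, not better, since shrinking $p$ can only increase $\min(p)$, whereas the relevant constraint is $\min(p)$ being smaller than the coordinate-wise width of $q$; and $q$ cannot be enlarged because it is dictated by the forcing condition that \cref{lem:afterMe} hands you. For $n=3$ the obstruction is compounded by the additional requirement $2p + q'' \subseteq q$. The paper's device of placing $z$ and $z' = z + (y_1 - x_1)$ as nearby points inside $p$ is precisely what avoids this positional trap: it accepts one unevaluated term and arranges for it to cancel algebraically rather than trying to evaluate it.
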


The proof is very similar to that of \cite{MR1841758}; however, in our context, it requires the following geometrical lemma (which is trivial on $\rr$ and hence was not required in~\cite{MR1841758}).

\begin{lem}\label{usefulLemma}
	For $x_1,y_1,x_2,y_2,x_3,y_3 \in p^*$ satisfying $x_1 + x_2 + x_3 = y_1 + y_2 + y_3$, there exists a non-zero~$\epsilon \in \rr^2$ for which
\[
	\{ x_1 + \epsilon, x_2 - \epsilon, y_2 + \delta \} \subset p
\]
where $\delta = (y_1 - x_1) - \epsilon$. This $\epsilon$ can be chosen to be $\{ x_1,y_1,x_2,y_2,x_3,y_3 \}$-generic over $M$.
\end{lem}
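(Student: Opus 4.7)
The plan is to reduce the problem to showing that a certain intersection of three open rectangles in $\rr^2$ is non-empty, and then apply the $1$-dimensional Helly theorem coordinate-by-coordinate. Using the rewrite $y_2 + \delta = (y_1 + y_2 - x_1) - \epsilon$, the three containments $\{x_1 + \epsilon, x_2 - \epsilon, y_2 + \delta\} \subset p$ translate into $\epsilon \in E_1 \cap E_2 \cap E_3$, where
\[
E_1 = p - x_1, \quad E_2 = x_2 - p, \quad E_3 = (y_1 + y_2 - x_1) - p.
\]
Writing $p = I \times J$ for the open rational intervals $I,J$ supplied by $\pp_2$, each $E_k$ is a Cartesian product of open intervals, so the triple intersection factors coordinate-wise; non-emptiness therefore becomes a $1$-dimensional problem in each coordinate independently.

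For each coordinate I would invoke the $1$-dimensional Helly theorem: three intervals in $\rr$ share a common point iff they pairwise do. Because the three intervals in a given coordinate all have the same length ($|I|$ or $|J|$, respectively), pairwise overlap reduces to bounding centre-differences by that length. Overlap in $(E_1,E_2)$ follows directly from $x_1, x_2 \in p$; overlap in $(E_1,E_3)$ follows from $y_1, y_2 \in p$. The crucial pair is $(E_2,E_3)$: here the centre-difference simplifies, via the hypothesis $x_1+x_2+x_3 = y_1+y_2+y_3$, to $x_3 - y_3$, whose magnitude in each coordinate is strictly less than the interval length precisely because $x_3, y_3 \in p$.

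Having thus shown $E := E_1 \cap E_2 \cap E_3$ is a non-empty open subset of $\rr^2$ definable from $x_1, y_1, x_2, y_2$, I would pick $\epsilon \in E$ which is $M[x_1, y_1, x_2, y_2, x_3, y_3]$-generic for Cohen forcing on $\rr^2$: by the remark closing \cref{subsec}, the set of such generics is comeagre in $\rr^2$ as seen in $V$, so $E$ contains one; since $\{0\}$ is meager, this $\epsilon$ can be chosen non-zero. The main obstacle is the centre-separation bound for $(E_2,E_3)$: this is the unique step that invokes the sum hypothesis, and is exactly the geometric obstruction which is vacuous in dimension one (hence absent from \cite{MR1841758}) but becomes essential for $N \geq 2$. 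A generalisation of this Helly-style bookkeeping to larger collections of intervals will be required for the treatment of arbitrary $N$ in \cref{sec:final}.
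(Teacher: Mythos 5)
Your proof is correct, and it takes a genuinely different --- and substantially cleaner --- route than the paper. The paper's argument splits into cases according to whether some pair $(x_i,y_j)$ can be made to share a coordinate (with two sub-cases depending on how the remaining pairs ``point''), and otherwise falls back on a geometric argument involving the triangle $\Delta$ spanned by the vectors $y_i - x_i$, three auxiliary rectangles $R_i$, a distinguished vertex $v$, and a relabelling at the end. Your approach replaces all of this with one reduction: the three containments are equivalent to $\epsilon \in E_1 \cap E_2 \cap E_3$, where $E_1 = p - x_1$, $E_2 = x_2 - p$, $E_3 = (y_1 + y_2 - x_1) - p$ are all open axis-aligned rectangles with the same side lengths as $p$, so the triple intersection factors over coordinates. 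Two of the pairwise overlaps are immediate ($0 \in E_1 \cap E_2$ since $x_1,x_2 \in p$, and $y_1 - x_1 \in E_1 \cap E_3$ since $y_1,y_2 \in p$), while the third, $E_2 \cap E_3 \neq \emptyset$, follows because $E_3 = E_2 + (x_3 - y_3)$ --- this is the only place the hypothesis $\sum x_i = \sum y_i$ enters --- together with $x_3,y_3 \in p$; one-dimensional Helly then yields a nonempty open triple intersection. Your handling of the genericity clause is also sound: $E$ is nonempty open, the $M[x_1,\ldots,y_3]$-generics form a comeagre set in $V$ by the remark closing \cref{subsec}, and genericity over $M$ already forces $\epsilon \neq 0$. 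What your argument buys beyond elegance is dimension-independence: it carries over verbatim to $\rr^N$ since each $E_k$ remains a box with the same sides as $p$, whereas the paper's generalisation in \cref{sec:final} instead appeals to the fact that $\Delta$ lives in a two-dimensional affine subspace of $p$.
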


Let $\pi_0$ denote the projection function onto the first coordinate in $\rr^2$, and define $\pi_1$ similarly.

\begin{proof}
	Suppose we are given $x_1,y_1,x_2,y_2,x_3,y_3 \in p^*$. First, assume that
	\begin{align*}
		\pi_0(x_1) = \pi_0(y_1) \text{\; and \;} \pi_1(x_1) > \pi_1(y_1)
	\end{align*}
where the choice of indices and coordinates is made without loss of generality. There are two cases.
\begin{itemize}
\item[(i)] First, assume the two remaining pairs can be split up into $(x_2,y_3)$ and $(x_3,y_2)$ both not pointing downwards, and let $D_1 = \pi_1(x_1) - \pi_1(y_1)$. Since $x_1 + x_2 + x_3 = y_1 + y_2 + y_3$, we may assume w.l.o.g.\ that $\pi_1(y_2) - \pi_1(x_3) \geq D_1/2$. Let $\epsilon = (0,\pi_1(x_2) - \pi_1(y_3) - \gamma)$ where $\gamma > 0$ is sufficiently small and generic (such a $\gamma$ exists since $p$ is open). It is easily verified that this $\epsilon$ works.
\item[(ii)] Second, suppose only one of the two remaining pairs points upwards, w.l.o.g.\ $(x_2,y_2)$; hence $\pi_1(y_2) - \pi_1(x_2) \geq D_1$. Then choose $\epsilon = (0,-\gamma)$ where $0 < \gamma < D_1/2$ is sufficiently generic. Again, it is easily verified that this $\epsilon$ works.
\end{itemize}
Now suppose that no pair $(x_i,y_j)$ out of $x_1,y_1,x_2,y_2,x_3,y_3$ can be arranged to share a coordinate. We argue geometrically: since $\sum_{i\leq 3} x_i = \sum_{i\leq 3} y_i$, we know that $\sum_{i\leq 3} (y_i - x_i) = 0$; hence consider the triangle $\Delta$ whose sides are given by the vectors $(x_i - y_i)$. Its set of vertices $V$ is identified with the pairs of vectors that induce the edges. In particular, let $v$ be the vertex of $\Delta$ where $(y_2 - x_2)$ and $(y_1 - x_1)$ meet.

Let $R_i$ be the unique rectangle with diagonal $(y_i - x_i)$ and sides parallel to the horizontal and vertical axes; then $\bigcup_{i\leq 3} R_i$ bounds a rectangle inside~$\Delta$. Further, each $R_i$ is a shift of the unique condition~$p_i$ bounding the pair $(x_i,y_i)$.

By an easy geometrical observation, and w.l.o.g., we have $R_1 \cap R_2 \neq \emptyset$, and $R_2 \cap R_3 = \emptyset$. Now, let $\epsilon = \beta + \gamma$ where $\beta \in R_1 \cap R_2$, the shift $\gamma$ is sufficiently generic, and the following hold:
\begin{enumerate}
	\item $x_1 + \epsilon \in p$
	\item $x_2 - \epsilon \in p$
	\item $v + \epsilon \in R_3$
\end{enumerate}
	A sufficiently small $\gamma$ exists since $p$ is open. By symmetry of $R_3$ (and hence of $p_3$), since $v + \epsilon \in R_3$, we have $y_3 + \delta \in p_3 \subset p$. Relabelling $y_3$ to $y_2$ completes the proof.\qedhere
\end{proof}

\begin{proof}[Proof of \cref{lem:sum}]
	For $n=2$, the proof is identical to the argument in~\cite{MR1841758}, which we include here for completeness. Let $x_1,x_2,y_1,y_2 \in p^*$ satisfy $x_1 + x_2 = y_1 + y_2$. Take a sufficiently small condition $q < p$ such that there exists a generic $z \in q^{*[x_1,x_2,y_1,y_2]}$ for which $z' = z + (y_1 - x_1) \in q$. This exists since $p$ is open. Now $C(x_1,x_2,z,z') = C(y_1,y_2,z,z')$ by the following argument: by the properties of abelian cocycles,
	\begin{align*}
		C(x_1,x_2,z,z') &= C(x_1,z') + C(x_2,z) + C(x_1 + z', x_2 + z)\\
		&= 2g + C(x_1 + z', x_2 + z)
		\intertext{since the pairs in the first two terms are generic (cf.\ \cref{prodForce}). Similarly,}
		C(y_1,y_2,z,z') &= C(y_1,z) + C(y_2,z') + C(y_1 + z, y_2 + z')\\
		&= 2g + C(y_1 + z, y_2 + z').
	\end{align*}
	Now recall that $x_1 + x_2 = y_1 + y_2$; hence $x_2 + z = y_2 + z'$, and the equality $z' + x_1 = z + y_1$ is immediate from the definition. Further,
\begin{align*}
	C(x_1,x_2,z,z') &= C(x_1,x_2) + C(z,z') + C(x_1+x_2,z+z')\\
	&= C(y_1,y_2) + C(z,z') + C(y_1+y_2,z+z')\\
	&= C(y_1,y_2,z,z')
\end{align*}
from which the case $n=2$ follows immediately.

For $n=3$, we use \cref{usefulLemma}: let $(\epsilon,\delta)$ be as provided by the lemma. Then define, as in \cite{MR1841758}, the sequences $(x'_i)$ and $(y'_i)$ such that $x'_3 = y'_3$ and
\[
	x'_1 = x_1 + \epsilon, \; x'_2 = x_2 - \epsilon, \; y'_1 = y_1 - \delta, \; y'_2 = y_2 + \delta
\]
which yields the result at once from the special case above and the properties of abelian cocycles.
\end{proof}

We prove below the full version of \cref{lem:sum}. Due to the topological differences between~$\rr$ and~$\rr^2$, the general case of our proposition below is more involved than Kanovei and Reeken's counterpart.

\begin{prp}\label{lem:allCases}
	\Cref{lem:sum} holds in fact for all $n \geq 2$.
\end{prp}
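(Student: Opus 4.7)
We proceed by induction on $n$, with base cases $n = 2, 3$ supplied by \cref{lem:sum}. For the inductive step, suppose the claim holds for all $k \leq n$, and consider tuples $(x_i)_{i \leq n+1}$ and $(y_i)_{i \leq n+1}$ in $p^*$ with $\sum x_i = \sum y_i$; the goal is to conclude $C(x_1, \ldots, x_{n+1}) = C(y_1, \ldots, y_{n+1})$.

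The strategy is to reduce to the special case where the final coordinates agree. Specifically, I plan to produce an auxiliary tuple $(y_1', \ldots, y_n', x_{n+1}) \in (p^*)^{n+1}$ of the same total sum as $(y_i)$, satisfying $C(y_1, \ldots, y_{n+1}) = C(y_1', \ldots, y_n', x_{n+1})$. Given such a reduction, the recursive identity $C(a_1, \ldots, a_{n+1}) = C(a_1, \ldots, a_n) + C(a_1 + \cdots + a_n, a_{n+1})$ applied to both $(x_i)$ and $(y_1', \ldots, y_n', x_{n+1})$ reduces the problem to showing $C(x_1, \ldots, x_n) = C(y_1', \ldots, y_n')$; since $\sum_{i \leq n} x_i = \sum_{i \leq n} y_i'$ and both tuples lie in $p^*$, this is immediate from the inductive hypothesis.

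The entire difficulty is therefore in constructing the auxiliary tuple. The first attempt is the simplest possible move: replace $(y_n, y_{n+1})$ with $(y_n + (y_{n+1} - x_{n+1}), x_{n+1})$, which preserves the partial sum and, by the $n = 2$ base case, preserves the $C$-value---provided the shifted element $y_n + (y_{n+1} - x_{n+1})$ lies in $p^*$. When this geometric constraint fails (e.g.\ if $y_{n+1}$ and $x_{n+1}$ sit on opposite edges of $p$ relative to $y_n$), I would fall back on a three-element perturbation via the $n = 3$ base case: replace $(y_{n-1}, y_n, y_{n+1})$ by $(y_{n-1}', y_n', x_{n+1})$ of matching sum, where $y_{n-1}', y_n' \in p^*$ are chosen $M$-generic over the existing data. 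The existence of such a pair follows from the fact that the Minkowski sumset $p + p$ is a rectangle of twice the dimensions of $p$, and so contains the target value $y_{n-1} + y_n + y_{n+1} - x_{n+1}$ whenever the shift $y_{n+1} - x_{n+1}$ is bounded by $\diam(p)$---which it always is.

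The main obstacle I anticipate is this geometric existence argument together with the genericity bookkeeping, which mirrors the approach of \cref{usefulLemma}. Should the three-element perturbation itself fail in exceptional configurations, the remedy is to iterate across more coordinates ($y_{n-k}, \ldots, y_{n+1}$ for $k \geq 2$), exploiting that $p + \cdots + p$ grows with each new summand and eventually contains any prescribed target. Since $n + 1 \geq 4$ throughout the induction, at least three of the $(y_i)$'s are always available to absorb any shift of magnitude at most $\diam(p)$, guaranteeing termination. Genericity is maintained throughout because $M$-generic reals form a comeagre subset of every open condition, so generic perturbations with the required geometric properties exist whenever the corresponding open target is non-empty.
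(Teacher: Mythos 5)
Your approach is genuinely different from the paper's. The paper first invokes \cref{lem:ngonLem} to make one of the difference vectors $(y_i - x_i)$ vertical, then runs a recursion that perturbs only \emph{two} of the $y$'s at each stage (one fixed anchor $y_1$ and one other $y_{i_\ell}$), so that at every step at least one coordinate is untouched and the ``special case'' applies; a careful slack-counting argument (Claim 2) bounds the number of steps by the number $n$ of available indices, and the terminal step lands exactly in the special case. Your plan---replace $y_{n+1}$ by $x_{n+1}$ outright and absorb the shift $y_{n+1}-x_{n+1}$ across some other $y$'s, then fall back on the inductive hypothesis for the shorter tuple---is a different decomposition. It has the virtue of avoiding the $n$-gon machinery entirely and, if completed, would be more elementary. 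But as written it has two real gaps.

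First, your $2$-element move $y_n' = y_n + (y_{n+1}-x_{n+1})$ is fully determined by the data, so there is no free parameter to make $y_n'$ an element of $p^*$ (as opposed to merely $p$). Elements of $p^*$ are individually $M$-generic but need not be mutually generic, so a fixed algebraic combination of them need not be $M$-generic. The paper handles this systematically by introducing an explicitly generic perturbation parameter ($e$ sufficiently generic in \cref{lem:ngonLem} and in Cases (a)/(b)); every version of your argument would need the same device, and your $2$-element ``first attempt'' cannot have it.

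Second, and more seriously, your termination claim---``at least three of the $(y_i)$'s are always available to absorb any shift of magnitude at most $\diam(p)$''---is not justified, and the fallback of ``iterating across more coordinates'' does not escape the problem. The difficulty is that the target $\sum_{j=n-k}^{n} y_j + (y_{n+1}-x_{n+1})$ is a moving target: as $k$ grows, the Minkowski sum $p+\cdots+p$ widens, but the summand $\sum_{j=n-k}^{n} y_j$ moves with it, so ``the sumset eventually contains any prescribed target'' is not literally what is needed. A direct slack computation shows the global constraint $\sum x_i = \sum y_i$ guarantees the target lies in the $(k+1)$-fold sumset only when $k = n-1$, i.e.\ when you perturb \emph{all} of $y_1,\dots,y_{n+1}$; but preserving the $C$-value of an $(n+1)$-term perturbation requires \cref{lem:sum} for tuples of length $n+1$, which is exactly the statement you are proving---a circularity. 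To avoid it you would need either (i) a lemma showing a suitable pairing $(x_i, y_j)$ always exists for which a bounded ($\leq 3$-element) perturbation succeeds, which you assert but do not prove, or (ii) an iteration that perturbs only two $y$'s per step so that the special case (not the full inductive hypothesis for the current length) carries each step---which is precisely the mechanism the paper's Claim 1 and Claim 2 are engineered to provide.
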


We will prove the proposition by induction; consider case the $k+1$.  Before we give the intricate general proof, we reproduce the \emph{special case} from \cite{MR1841758}, which also applies in the case $N=2$ (and in fact for every $N$): we assume that $x_\ell = y_\ell$ for some $\ell \leq k+1$. To that end, suppose the proposition holds below $k+1$, let $x_1,y_1,\ldots,x_{k+1},y_{k+1} \in p^*$ be given, and suppose w.l.o.g.\ that $x_1 = y_1$. Then the lemma follows immediately:
\begin{align*}
	C(x_1,\ldots,x_{k+1}) &= C(x_2,\ldots,x_k) + C(x_2 + \ldots + x_{k+1}, x_1)\\
	&= C(y_2,\ldots,y_k) + C(y_2 + \ldots + y_{k+1}, y_1) = C(y_1,\ldots,y_{k+1})
\end{align*}
by the inductive hypothesis and the definition of $C$ for more than two inputs, as required.

\medskip

The general proof of \cref{lem:allCases} is explained geometrically by considering the $k+1$-gon~$\Gamma$ constructed by concatenating the vectors $(y_1 - x_1),\ldots, (y_{k+1} - x_{k+1})$ (recall that the specific labelling of vectors is inconsequential). As before, for every $(y_i - x_i)$, let $R_i$ denote the rectangle bounding it, which can be considered the translation of an associated condition $p_i \subset p$. Since $p$ is open, we may extend $p_i$ by a small amount in all directions without leaving $p$; we use this fact repeatedly.

First, we prove another geometrical lemma.

\begin{lem}\label{lem:ngonLem}
	With $x_1,y_1,\ldots,x_{k},y_k \in p^*$ and $k \geq 3$, its associated $k$-gon~$\Gamma$ can be transformed into a $k$-gon~$\Gamma'$ containing an edge parallel to the second axis without changing the value of $C$ on $\Gamma'$.
\end{lem}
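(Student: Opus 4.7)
Proof plan. The strategy is to execute a small local modification to at most three consecutive pairs among the $(x_i, y_i)$, straightening one edge of $\Gamma$ into a vertical line in a way already controlled by \cref{lem:sum} for $n \in \{2, 3\}$. Pick a non-vertical edge, say $y_1 - x_1 = (a, b)$ with $a \neq 0$ (otherwise there is nothing to prove), and seek replacement pairs $(x_1', y_1'), (x_2', y_2') \in (p^*)^2$ satisfying $y_1' - x_1' = (0, b)$, $x_1' + x_2' = x_1 + x_2$, and $y_1' + y_2' = y_1 + y_2$. Under these constraints, the new second edge is $(y_2 - x_2) + (a, 0)$: the horizontal part of the original first edge has simply migrated to the second, while $\Gamma'$ remains closed. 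By the additive identity
\[
C(x_1, \ldots, x_k) = C(x_1, x_2) + C(x_3, \ldots, x_k) + C(x_1 + x_2, x_3 + \ldots + x_k),
\]
together with \cref{lem:sum} ($n = 2$) applied to both the $x$- and $y$-sequences, we obtain $C(x_1', x_2', x_3, \ldots, x_k) = C(x_1, \ldots, x_k)$, and similarly for the $y$-sequence; the value of $C$ on $\Gamma'$ therefore matches that on $\Gamma$.

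The substantive part is exhibiting $(x_1', y_1', x_2', y_2') \in (p^*)^4$ with the required properties. Writing $p = I \times J$ and parametrising $x_1' = x_1 + \epsilon$, $x_2' = x_2 - \epsilon$, the memberships $x_1', x_2' \in p$ hold for small~$\epsilon$; $y_1' = x_1 + \epsilon + (0, b) \in p$ is automatic because $(\pi_0(x_1), \pi_1(y_1)) \in p$; and $y_2' = y_2 - \epsilon + (a, 0) \in p$ holds for small~$\epsilon$ provided $\pi_0(y_2) + a \in I$. When this condition is met, choosing $\epsilon$ sufficiently small and $\{x_1, y_1, \ldots, x_k, y_k\}$-generic over~$M$---available by comeagreness of generics in the open set~$p$---completes the construction.

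When the last membership fails, i.e. $\pi_0(y_2) + a \notin I$, I would pass to a three-pair modification, which is where the hypothesis $k \geq 3$ enters. Split the horizontal correction $(a, 0)$ between two neighbouring edges as $(a_2, 0)$ and $(a_3, 0)$ with $a_2 + a_3 = a$, and simultaneously choose shifts $\epsilon_1, \epsilon_2, \epsilon_3$ with $\sum_i \epsilon_i = 0$, in the spirit of \cref{usefulLemma}, so that every perturbed endpoint lies in~$p$; the resulting three-pair swap is then covered by \cref{lem:sum} with $n = 3$. I expect the main difficulty to be precisely this geometric case analysis: since edges of $\Gamma$ may be oriented arbitrarily in~$\rr^2$, one must argue more carefully than in the one-dimensional setting, exploiting the closure condition $\sum_i (y_i - x_i) = 0$ together with the convexity of~$p$ to guarantee that the horizontal correction can always be distributed without any endpoint escaping~$p$.
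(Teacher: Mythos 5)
Your overall strategy—perturb two pairs simultaneously to straighten one edge, then invoke \cref{lem:sum} ($n=2$) and the additivity of $C$ to keep the value invariant—matches the paper's. The substantive difference, and the gap, is in \emph{which} two pairs you perturb and how. You start from an arbitrary non-vertical edge $(a,b)$ and an arbitrary companion pair, restrict to a small $\epsilon$, and then correctly identify that the construction can fail when $\pi_0(y_2)+a\notin I$. At that point you gesture toward a three-pair modification ``in the spirit of \cref{usefulLemma}'' but explicitly defer the geometric case analysis; as written, the lemma is not proved in that case, and you yourself flag this as the hard part.

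The paper avoids this case analysis entirely by making two choices you do not: it takes the edge $(y_1-x_1)$ with the \emph{smallest positive} $\pi_0$-shift $b$, and it pairs it with an edge $(y_2-x_2)$ of \emph{opposite sign}, i.e.\ with $\pi_0(x_2)>\pi_0(y_2)$, which exists because $\sum_i(\pi_0(y_i)-\pi_0(x_i))=0$. It then shifts \emph{horizontally only}: $x_1'=x_1+(e,0)$, $x_2'=x_2-(e,0)$, $y_1'=y_1-(d,0)$, $y_2'=y_2+(d,0)$ with $e+d=b$ and $e$ generic. The point is that $e$ and $d$ need \emph{not} be small: with pair~$2$ of opposite sign, one checks that the admissible range for $e$ is the open interval $\bigl(\max(0,\,b-(\sup I-\pi_0(y_2))),\;\pi_0(x_2)-\min I\bigr)$, which is non-empty because $b<\sup I-\min I$ and $\pi_0(x_2)>\pi_0(y_2)$. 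So the two-pair swap always succeeds, and no fallback is needed. Your ``small $\epsilon$'' restriction together with the unconstrained choice of companion pair is exactly what forces you into the unresolved case; choosing the opposite-sign companion and allowing $e$ to range over the interval above dissolves it. Note also that $k\geq 3$ is used in \emph{both} approaches for the same reason—to guarantee an untouched index so the special case (resp.\ \cref{lem:sum}) applies—not, as you suggest, only in the hypothetical three-pair fallback.
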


\begin{proof}
	Let $\Gamma$ be the $k$-gon induced by the vectors $(y_i - x_i)$. W.l.o.g.\ let $(y_1 - x_1)$ be the vector with the smallest positive $\pi_0$-shift; that is, $0 < b = \pi_0(y_1)-\pi_0(x_1)$ is minimal among all pairs $(x_i,y_i)$. Since $\sum x_i = \sum y_i$, there exists $(y_2 - x_2)$ for which $\pi_0(x_2) > \pi_0(y_2)$. Write $b = e + d$ where $e$ is sufficiently generic, and define
	\begin{align*}
		x_1' = x_1 + (e,0) \; &,  \; x_2' = x_2 - (e,0)\\
		y_1' = y_1 - (d,0)  \; &, \; y_2' = y_2 + (d,0)
	\end{align*}
	and put $x_i' = x_i$ and $y_i' = y_i$ everywhere else. Now, all $x'_i,y'_i \in p^*$ by minimality of $b$, and further, $\sum x_i' = \sum x_i$ and $\sum y_i' = \sum y_i$. But since $k \geq 3$, there exists $j$ for which $x_j = x_j'$ and $y_j = y_j'$. By the special case,
	\[
		C(x_1,\ldots,x_{k}) = C(x_1',\ldots,x_k') \; \text{ and } \; C(y_1,\ldots,y_{k}) = C(y_1',\ldots,y_k').
	\]
	Finally, the vector $(y_1' - x_1')$ is vertical since $\pi_0(x_1') = \pi_0(x_1) + e = \pi_0(y_1) - d = \pi_0(y_1')$.
\end{proof}

By symmetry, the same argument holds for the first axis (or any axis in $\rr^N$). The following proposition can be considered a repeated application of \cref{lem:ngonLem}, which ``smooths'' the $k$-gon $\Gamma$ induced by the set of points $x_1,y_1,\ldots,x_{k+1},y_{k+1}$ until one of its sides vanishes.

\begin{proof}[Proof of \cref{lem:allCases}]
Suppose $x_1,y_1,\ldots,x_{k+1},y_{k+1} \in p^*$ and there is no pair $(i,j)$ for which $x_i = y_j$; hence the special case does not apply. By \cref{lem:ngonLem}, we may assume the vector $(y_1-x_1)$ to be vertical, and assume w.l.o.g.\ that $\pi_1(x_1) > \pi_1(y_1)$.
By minimality, there exists $n < k+1$ and $i_1,\ldots,i_n$ such that
\[
	\sum_{j \leq n} (\pi_1(y_{i_j}) - \pi_1(x_{i_j})) \geq \pi_1(x_1) - \pi_1(y_1)
\]
and $\pi_1(y_{i_j}) - \pi_1(x_{i_j}) > 0$ for all $j \leq n$. We now build sequences $(y_i^{(\ell)} - x_i^{(\ell)})$ and $(e^{(\ell)},d^{(\ell)})$ for $1 \leq \ell \leq n$ as follows. First, let $x_i^{(1)} = x_i$ and $y_i^{(1)} = y_i$ for all $i \leq k+1$. We construct the sequence by recursion on $\ell$: with $(y_i^{(\ell)} - x_i^{(\ell)})$ given, consider the following two cases:
\begin{enumerate}
	\item[(a)] Suppose there exist $e,d \in (0,\pi_1(y_{i_\ell}) - \pi_1(x_{i_\ell}) + a)$ (where $a$ is sufficiently small such that $p_{i_\ell} + (0,a) \subset p$) for which $e$ is sufficiently generic and
	\[
		x_1^{(\ell)} - (0,e) = y_1^{(\ell)} + (0,d).
	\]
	Then let $(e^{(\ell)},d^{(\ell)}) = (e,d)$ and stop the recursion.
	\item[(b)] If no such pair exists, pick $e \in (\pi_1(y_{i_\ell}) - \pi_1(x_{i_\ell}),\pi_1(y_{i_\ell}) - \pi_1(x_{i_\ell}) + a)$, sufficiently generic, and put $(e^{(\ell)},d^{(\ell)}) = (e,e)$.
\end{enumerate}
Now define:
\begin{align*}
		x_1^{(\ell+1)} = x_1^{(\ell)} - (0,e^{(\ell)}) \; &,  \; x_{i_\ell}^{(\ell+1)} = x_{i_\ell}^{(\ell)} + (0,e^{(\ell)})\\
		y_1^{(\ell+1)} = y_1^{(\ell)} + (0,d^{(\ell)}) \; &,  \; y_{i_\ell}^{(\ell+1)} = y_{i_\ell}^{(\ell)} - (0,d^{(\ell)})
	\end{align*}
The following claims ensure that the result follows.

\begin{claim}
	$C(x_1,\ldots,x_{k+1}) = C(x_1^{(\ell)},\ldots,x_{k+1}^{(\ell)})$ for all $\ell \leq n$.
\end{claim}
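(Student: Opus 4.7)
The plan is to argue by induction on $\ell \leq n$. The base case $\ell = 1$ is immediate from the definition, since $x_i^{(1)} = x_i$ for every $i \leq k+1$. The inductive step reduces the claim to establishing the one-step equality
\[
	C(x_1^{(\ell)}, \ldots, x_{k+1}^{(\ell)}) = C(x_1^{(\ell+1)}, \ldots, x_{k+1}^{(\ell+1)}),
\]
which is where the careful design of the recursion ought to pay off.

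To establish this one-step equality, I would exploit the structural fact that the sequences $(x_i^{(\ell)})$ and $(x_i^{(\ell+1)})$ agree at every coordinate $i \notin \{1, i_\ell\}$, and that the opposing vertical shifts $\pm(0, e^{(\ell)})$ cancel in the sum:
\[
	x_1^{(\ell+1)} + x_{i_\ell}^{(\ell+1)} = x_1^{(\ell)} + x_{i_\ell}^{(\ell)}, \quad \text{hence} \quad \sum_i x_i^{(\ell+1)} = \sum_i x_i^{(\ell)}.
\]
With $k-1$ matching coordinates in place, I would then apply the special case of \cref{lem:allCases} (established just before the claim) iteratively to peel off each matching index, reducing the task to the two-term equality $C(x_1^{(\ell)}, x_{i_\ell}^{(\ell)}) = C(x_1^{(\ell+1)}, x_{i_\ell}^{(\ell+1)})$. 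Since the two pairs have equal sums and lie in $p^*$, this is exactly the base case $n = 2$ of \cref{lem:sum}.

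The main obstacle is verifying the standing hypothesis that all perturbed points actually lie in $p^*$, since both the iterated special case and the base case of \cref{lem:sum} require this. Membership in $p$ should follow from the deliberate bounds in cases (a) and (b) of the construction: the shift $e^{(\ell)}$ is taken within the buffer parameter $a$ chosen precisely so that $p_{i_\ell} + (0,a) \subset p$, which keeps $x_{i_\ell}^{(\ell)} + (0,e^{(\ell)})$ inside $p$, while the constraint $e^{(\ell)} < \pi_1(x_1) - \pi_1(y_1)$ (enforced implicitly by the minimality in the choice of $n$ and $i_1, \ldots, i_n$) keeps $x_1^{(\ell)} - (0,e^{(\ell)})$ inside $p$. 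For $M$-genericity, the explicit choice of $e^{(\ell)}$ as sufficiently generic over the model extended by the original points, together with \cref{prodForce}, ensures that the perturbed points remain $M$-generic, so that the inductive hypothesis and base case of \cref{lem:sum} can indeed be invoked.
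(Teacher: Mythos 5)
Your proposal is correct and takes essentially the same approach as the paper: observe that the sums are preserved, that all coordinates outside $\{1,i_\ell\}$ agree between $(x_i^{(\ell)})$ and $(x_i^{(\ell+1)})$, and invoke the special case, with genericity and membership in $p^*$ maintained by the construction. The only cosmetic difference is that the paper applies the special case once and then calls on the proposition's inductive hypothesis for $k$-term sequences, whereas you iteratively peel off all matching indices down to the two-term base case $n=2$; both routes are sound.
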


\begin{cproof}
	The construction shows that $\sum_i x_i = \sum_i x_i^{(\ell)}$. Since at least one value $x_j^{(\ell)}$ is preserved when passing from $x_j^{(\ell)}$ to $x_j^{(\ell+1)}$, induction proves the result from the special case. The same argument holds for the sequences $(y_i^{(\ell)})$. Since $\sum x_i = \sum y_i$, we have for all $\ell,\ell'$
	\[
		\sum_i x_i^{(\ell)} = \sum_i y_i^{(\ell')}. \qedhere
	\]
\end{cproof}

\begin{claim}
	Case (a) above occurs in fewer than $n$ steps.
\end{claim}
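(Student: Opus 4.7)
The plan is a counting argument tracking how the vertical discrepancy
\[
D_\ell := \pi_1(x_1^{(\ell)}) - \pi_1(y_1^{(\ell)})
\]
shrinks along the recursion. Writing $U_\ell := \pi_1(y_{i_\ell}) - \pi_1(x_{i_\ell}) > 0$, the initial value is $D_1 = \pi_1(x_1) - \pi_1(y_1) > 0$, and I will show that each case-(b) step knocks $D_\ell$ down by strictly more than $2U_\ell$ while keeping it positive. The minimality inequality $\sum_{j \leq n} U_j \geq D$ then supplies exactly the budget needed to force case (a) to take over before all $n$ indices are spent.

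First I would unpack case (b). Its defining hypothesis---the failure of case (a)---is equivalent (given $D_\ell > 0$) to $D_\ell \geq 2(U_\ell + a)$, since case (a) asks for $e,d \in (0, U_\ell + a)$ with $e + d = D_\ell$. The prescribed choice $e^{(\ell)} = d^{(\ell)} \in (U_\ell, U_\ell + a)$ then yields
\[
D_{\ell+1} = D_\ell - 2e^{(\ell)}, \qquad 2U_\ell < 2e^{(\ell)} < 2(U_\ell + a) \leq D_\ell.
\]
The right-hand inequality gives $D_{\ell+1} > 0$, preserving positivity of the discrepancy; the left-hand inequality, iterated across $k$ consecutive case-(b) steps, gives $D_{k+1} < D - 2\sum_{j \leq k} U_j$.

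Now suppose toward contradiction that case (b) occurred at every step $\ell = 1, 2, \ldots, n$. The positivity just derived forces $D_{n+1} > 0$, while the iterated bound together with $\sum_{j \leq n} U_j \geq D$ gives $D_{n+1} < D - 2D = -D < 0$. This contradiction shows case (b) cannot survive through step $n$, so case (a) must trigger at some $\ell \leq n$, i.e.\ after strictly fewer than $n$ case-(b) steps, as claimed.

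The argument carries no hidden geometric difficulty---the geometric content was already handled in \cref{lem:ngonLem} and the preceding claim. The only care required is to track strictness correctly: strict inequality on the reduction side ($2e^{(\ell)} > 2U_\ell$, from the open interval) versus the non-strict failure hypothesis ($D_\ell \geq 2(U_\ell + a)$) is exactly what makes the two resulting bounds on $D_{n+1}$ genuinely incompatible.
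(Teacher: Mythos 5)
Your proof is correct and follows essentially the same strategy as the paper: track the vertical discrepancy $D_\ell$, show it strictly decreases by at least $2U_\ell$ at each case-(b) step (the paper writes the slightly weaker $D^{(2)} < D^{(1)} - e^{(1)}$, but the recurrence $D_{\ell+1} = D_\ell - 2e^{(\ell)}$ is what both arguments actually use), and derive a contradiction from the budget inequality $\sum_{j\le n} U_j \ge D$. Your version is cleaner on two counts: you spell out that the failure of case (a) is equivalent to $D_\ell \ge 2(U_\ell + a)$, and you explicitly track that $D_\ell$ stays strictly positive through consecutive case-(b) steps, so the contradiction $0 < D_{n+1} < 0$ is genuinely a contradiction rather than merely a heuristic. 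The paper leaves both of these implicit behind ``the result follows by induction''. One cosmetic mismatch with the claim's wording: your argument shows case (a) triggers at some $\ell \le n$, not strictly before step $n$; this is what the paper's own argument gives too, and it suffices for the application, so the discrepancy is purely one of phrasing.
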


\begin{cproof}
	Let $D^{(1)} = \pi_1(x_1) - \pi_1(y_1)$. Suppose Case (a) does not occur. Then, by construction,
	\[
		e^{(1)} > \pi_1(y_{i_1}) - \pi_1(x_{i_1})
	\]
	and so
	\[
		D^{(2)} = \pi_1(x_1') - \pi_1(y_1') < D^{(1)} - e^{(1)}.
	\]
	Since $\sum \pi_1(y_{i_j}) - \pi_1(x_{i_j}) \geq \pi_1(x_1) - \pi_1(y_1)$, the result follows by induction.
\end{cproof}

\begin{claim}
	When Case (a) is applied at stage $\ell +1$, then $C(x_1^{(\ell+1)},\ldots,x_{k+1}^{(\ell+1)}) = C(y_1^{(\ell+1)},\ldots,y_{k+1}^{(\ell+1)})$.
\end{claim}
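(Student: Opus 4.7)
The plan is to observe that Case (a) of the recursive step is engineered precisely so that $x_1^{(\ell+1)} = y_1^{(\ell+1)}$, after which the reduction to the already-proved special case is essentially automatic.

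First, I would unpack the definition of Case (a). Its defining equation reads
\[
x_1^{(\ell)} - (0, e^{(\ell)}) = y_1^{(\ell)} + (0, d^{(\ell)}),
\]
and the construction sets $x_1^{(\ell+1)} = x_1^{(\ell)} - (0, e^{(\ell)})$ and $y_1^{(\ell+1)} = y_1^{(\ell)} + (0, d^{(\ell)})$; hence $x_1^{(\ell+1)} = y_1^{(\ell+1)}$.

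Next, I would verify that the new tuples continue to live in $p^*$. Since $p = I \times J \in \pp_2$ is a rectangle and $x_1^{(\ell)}, y_1^{(\ell)}$ share their first coordinate (the vector $y_1 - x_1$ having been arranged vertical by \cref{lem:ngonLem}), the vertical segment joining them is entirely contained in $p$, so the common point $x_1^{(\ell+1)} = y_1^{(\ell+1)}$ lies in $p$. The bounds on $e^{(\ell)}, d^{(\ell)}$ together with the cushion $a$ (for which $p_{i_\ell} + (0,a) \subset p$) ensure that $x_{i_\ell}^{(\ell+1)}$ and $y_{i_\ell}^{(\ell+1)}$ remain in $p$ as well; all other entries are untouched. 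Genericity over $M$ of the perturbed points follows from the stipulation in Case (a) that $e^{(\ell)}$ is chosen sufficiently generic with respect to all reals in play at stage $\ell$, so no relativised dense set in $M$ is missed by the shifted points.

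Finally, I would invoke the special case from the opening of the proof of \cref{lem:allCases}. By the preceding claim, $\sum_i x_i^{(\ell+1)} = \sum_i x_i = \sum_i y_i = \sum_i y_i^{(\ell+1)}$, while $x_1^{(\ell+1)} = y_1^{(\ell+1)}$. Peeling off this common coordinate via the identity
\[
C(u_1,\ldots,u_{k+1}) = C(u_2,\ldots,u_{k+1}) + C(u_2 + \cdots + u_{k+1}, u_1)
\]
applied to both tuples reduces the problem to a statement about $k$-tuples with equal sums, which is handled by the inductive hypothesis of \cref{lem:allCases}. The main subtlety in this claim is not the algebra---that reduction is essentially one line---but rather the geometric verification that the Case (a) perturbations preserve both $p$-membership and genericity, which is precisely what the defining intervals for $e^{(\ell)}, d^{(\ell)}$ and the cushion $a$ were designed to guarantee.
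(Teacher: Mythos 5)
Your proof is correct and follows the same route as the paper: observe that the defining equation of Case~(a) forces $x_1^{(\ell+1)} = y_1^{(\ell+1)}$, and then reduce to the already-established special case. The paper's own proof is a one-liner that cites the construction, Claim~1, and the special case; you simply unpack the steps (membership of the perturbed points in $p^*$, preservation of sums, peeling off the common coordinate) that the paper leaves implicit.
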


\begin{cproof}
	By construction,
	\[
		x_1^{(\ell+1)} = x_1^{(\ell)} - (0,e^{(\ell)}) = y_1^{(\ell)} + (0,d^{(\ell)}) = y_1^{(\ell+1)}
	\]
	and by Claim 1 and the special case the result follows.
\end{cproof}

Combining the claims yields the sequence of equalities
\[
	C(x_1,\ldots,x_{k+1}) = C(x_1^{(\ell+1)},\ldots,x_{k+1}^{(\ell+1)}) = C(y_1^{(\ell+1)},\ldots,y_{k+1}^{(\ell+1)}) = C(y_1,\ldots,y_{k+1})
\]
which completes the proof. \qedhere
\end{proof}

\subsection{Trivialising in stages}\label{sec:trivStages}
The previous proposition shows that $C$ is well-behaved under equal sums of sequences of equal length. In order to fully trivialise $C$ to a Borel coboundary, we require more, both algebraically and combinatorially. Most of the arguments in this subsection are of algebraic type, and hence carry over from~$\rr$ to $\rr^2$; in this subsection, we reproduce lemmas from \cite{MR1841758} with additional detail, and follow their general proof structure.

Accordingly, we introduce some notation: given a sequence $x_1,\ldots,x_n$, denote it by $(\bar{x})_n$, and hence let $\sum (\bar{x})_n = x_1 + \ldots + x_n$. We also write $(\bar{x})_n \in p^*$ if $x_i \in p^*$ for all $i \leq n$, and $C((\bar{x})_n) = C(x_1,\ldots,x_n)$. When the length of $(\bar{x})_n$ is clear, we also simply write $\bar{x}$ in its place. Further, if $k < \omega$ and $g \in G$, write $kg = g + \ldots + g$ containing $k$-many terms.

\medskip

The following two combinatorial lemmas are due to \cite{MR1841758}. As results in this subsection, as can be seen from their proofs, hold for all $\rr^N$; we state some without proof and refer to \cite{MR1841758} for details.

\begin{lem}[{\cite{MR1841758}, Lemma 53}]
	Suppose $(\bar{x})_n,(\bar{y})_m,(\bar{x}')_{n'},(\bar{y}')_{m'} \in p^*$ where $1 \leq m < n$ and $1 \leq m' < n'$. Suppose $\sum (\bar{x})_n = \sum (\bar{y})_m = s$ and $\sum (\bar{x}')_{n'} = \sum (\bar{y}')_{m'} = s'$. Then
	\[
		(n' - m')(C(\bar{x}) - C(\bar{y})) = (n - m)(C(\bar{x}') - C(\bar{y}')).
	\]
\end{lem}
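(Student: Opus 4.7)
The plan is to route the proof through \cref{lem:allCases} by concatenating copies of the four given sequences into two longer tuples of equal length and equal sum, and then extracting the desired identity by expanding $C$ via the recursive definition from \cref{sec:identities}.

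Concretely, I would form the tuples
\[
	A = \underbrace{\bar{x},\dots,\bar{x}}_{n'-m'\text{ times}},\,\underbrace{\bar{y}',\dots,\bar{y}'}_{n-m\text{ times}} \qquad \text{and} \qquad B = \underbrace{\bar{y},\dots,\bar{y}}_{n'-m'\text{ times}},\,\underbrace{\bar{x}',\dots,\bar{x}'}_{n-m\text{ times}}.
\]
The identity
\[
	(n'-m')n + (n-m)m' \;=\; nn'-mm' \;=\; (n'-m')m + (n-m)n'
\]
shows that $A$ and $B$ have the same length; both clearly have sum $(n'-m')s + (n-m)s'$; and all entries lie in $p^{*}$ since the original sequences do. Consequently, \cref{lem:allCases} gives $C(A) = C(B)$.

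The next step is to expand the two sides. A straightforward induction on $k$, using the identity $C(\bar u_1,\bar u_2) = C(\bar u_1) + C(\bar u_2) + C(\sum \bar u_1,\sum \bar u_2)$ from \cref{sec:identities}, shows that for any sequence $\bar u$ with $\sum \bar u = t$,
\[
	C(\underbrace{\bar u,\dots,\bar u}_{k\text{ times}}) \;=\; k\,C(\bar u) + \sum_{j=1}^{k-1} C(j\,t,\,t).
\]
Applying this separately to each block of $A$ and of $B$ and gluing the two blocks together with one further cross-term $C((n'-m')s,\,(n-m)s')$, every summand on either side that is not one of $(n'-m')C(\bar x)$, $(n-m)C(\bar y')$, $(n'-m')C(\bar y)$, $(n-m)C(\bar x')$ depends only on $s$ and $s'$ and therefore appears identically in both $C(A)$ and $C(B)$. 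Cancelling these common contributions from $C(A)=C(B)$ leaves
\[
	(n'-m')C(\bar x) + (n-m)C(\bar y') \;=\; (n'-m')C(\bar y) + (n-m)C(\bar x'),
\]
which is exactly the stated equation after rearrangement.

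I do not anticipate a substantive obstacle. The length computation is an elementary integer identity, the power expansion of $C$ is pure bookkeeping with the algebraic rules of \cref{sec:identities}, and the cancellation is automatic once one notices that the "inner" terms see only the partial sums $s, s'$ and not the individual entries. In particular, the proof never appeals to the geometry of $\rr^2$; this is why the statement, and the work in \cref{sec:trivStages} that depends on it, will carry through unchanged to every $\rr^N$.
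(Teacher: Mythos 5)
Your proof is correct, and it is the natural one given the tools the paper sets up; the paper itself simply cites Kanovei--Reeken's Lemma~53 without reproducing the argument, so there is no in-text proof to compare against, but your route through \cref{lem:allCases} is exactly the kind of ``purely combinatorial'' argument the paper describes this subsection as consisting of.

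Two small points you should make explicit when writing this up. First, the hypotheses allow $m=1$ or $m'=1$, in which case $\bar y$ (resp.\ $\bar y'$) is a single element and $C(\bar y)$ must be read via the convention $C(x)=0$ for a one-term sequence; this convention is forced by the recursive definition $C(x_1,\dots,x_{n+1}) = C(x_1,\dots,x_n) + C(x_1+\dots+x_n,x_{n+1})$ applied at $n=1$, and it is also needed for your repeated-block formula $C(\bar u,\dots,\bar u) = kC(\bar u) + \sum_{j=1}^{k-1}C(jt,t)$ to hold in the degenerate case $\ell(\bar u)=1$. Second, to invoke \cref{lem:allCases} you need the concatenated tuples to have length at least $2$: since $m\le n-1$ and $m'\le n'-1$, one has $nn'-mm' \ge n+n'-1 \ge 3$, so this is satisfied. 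With these two observations supplied, the length computation $nn'-mm'$, the equal sums $(n'-m')s+(n-m)s'$, and the cancellation of the $s,s'$-dependent cross-terms all go through as you wrote them, and the identity follows. Your closing remark that no geometry of $\rr^2$ is used is also correct and matches the paper's own observation that the results of this subsection transfer verbatim to $\rr^N$.
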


\begin{lem}\label{lem45}
	There exist $\tilde{x},\tilde{y} \in p^*$ and $n < \omega$ such that $(n+1)\tilde{x} = n\tilde{y}$.
\end{lem}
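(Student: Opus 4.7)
The plan is to set $\tilde{y} := T_n(\tilde{x})$, where $T_n \colon x \mapsto \frac{n+1}{n}x$ is the rational scaling map and $\tilde{x} \in p^*$ is chosen so that $\tilde{y}$ also lies in $p$. The equation $(n+1)\tilde{x} = n\tilde{y}$ is then immediate, so the task reduces to two points: (i) arranging $\tilde{y} \in p$, and (ii) ensuring that $\tilde{y}$ is $M$-generic. Two facts drive the argument: for large $n$ the map $T_n$ is as close to the identity as desired, and $T_n$, having rational coefficients, is a homeomorphism of $\rr^2$ that is coded in $M$.

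By \cref{lem:afterMe} we may take $p = I \times J$ to lie in the first quadrant. Fix any $x_0 \in p$. Since $T_n(x_0) \to x_0$ as $n \to \infty$ and $p$ is open, some $n$ satisfies $T_n(x_0) \in p$; by continuity of $T_n$, we can find an open subrectangle $q \subseteq p$ with rational corners such that $x_0 \in q$ and $T_n(q) \subseteq p$. This handles (i).

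For (ii), choose $\tilde{x} \in q^*$, which exists since $q^*$ is comeagre in $q$, and put $\tilde{y} := T_n(\tilde{x}) \in p$. Let $D \subseteq \rr^2$ be any dense open set with $D \in M$. Since $T_n \in M$ is a homeomorphism, $T_n^{-1}(D) \in M$ is also dense open; as $\tilde{x}$ is $M$-generic, $\tilde{x} \in T_n^{-1}(D)$, and hence $\tilde{y} \in D$. Therefore $\tilde{y}$ is $M$-generic and belongs to $p^*$, finishing the proof.

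The only non-routine step is (ii), the preservation of $M$-genericity under the scaling $T_n$, and this is handled by the observation that $T_n$ being coded in $M$ makes the preimage of any $M$-definable dense open subset of $\rr^2$ itself $M$-definable and dense open. Everything else relies only on openness of $p$ and the comeagreness of the set of $M$-generics.
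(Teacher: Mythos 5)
Your proof is correct and follows essentially the same approach as the paper: both arguments use the rational scaling $x \mapsto (1+\tfrac{1}{n})x$ together with openness of $p$ to produce $\tilde{y}$. The paper's proof is terser---it fixes $x \in p^*$, notes that $(1+\tfrac{1}{n})x \in p$ for large $n$, and leaves implicit the observation that rational scaling is an $M$-coded homeomorphism preserving $M$-genericity, a detail you spell out carefully and correctly.
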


\begin{proof}
	Fix any $x \in p^*$ and consider the straight line $L$ containing the origin and $x$. Since $p$ is open, there exists $n < \omega$ for which $\left(1 + \frac{1}{n}\right)x \in p^* \cap L$, which yields the desired $y$.
\end{proof}

\begin{dfn}
	The pair of sequences $(\bar{x})_{(n+1)}$ and $(\bar{y})_{n}$ given by $x_i = \tilde{x}$ and $y_i = \tilde{y}$ for all $i$ is the \define{canonical pair}, denoted by $(\lambda,\mu)$. The \define{canonical group element} is given by
	\[
		\tilde{g} = C(\lambda) - C(\mu).
	\]
\end{dfn}

\begin{lem}\label{lem765}
	For any $(\bar{x})_n, (\bar{y})_m \in p^*$ with $n \geq m$ and $\sum \bar{x} = \sum \bar{y}$, we have $C(\bar{x}) - C(\bar{y}) = (n-m)\tilde{g}$.
\end{lem}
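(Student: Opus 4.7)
My plan is to reduce this directly to the combinatorial identity stated just above (Kanovei--Reeken's Lemma 53) by slotting the canonical pair into its ``primed'' variables. The point is that the previous lemma gives a ratio identity
\[
    (n'-m')\bigl(C(\bar{x})-C(\bar{y})\bigr)=(n-m)\bigl(C(\bar{x}')-C(\bar{y}')\bigr),
\]
and the canonical pair is precisely the tool designed to make $n'-m'=1$, turning this ratio into a formula for $C(\bar{x})-C(\bar{y})$ itself.

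First I would dispose of the degenerate case $n=m$: the hypothesis $\sum\bar{x}=\sum\bar{y}$ together with \cref{lem:allCases} give $C(\bar{x})=C(\bar{y})$, while $(n-m)\tilde{g}=0$, so the identity holds trivially. For the main case $m<n$, I take $(\lambda,\mu)$ to be the canonical pair, with $|\lambda|=n_{0}+1$ and $|\mu|=n_{0}$ where $n_{0}$ comes from \cref{lem45}. By construction, $\lambda,\mu\in p^{*}$ and $\sum\lambda=(n_{0}+1)\tilde{x}=n_{0}\tilde{y}=\sum\mu$, so the hypotheses of the preceding combinatorial lemma are met for the quadruple $(\bar{x},\bar{y},\lambda,\mu)$. (Note the slight asymmetry: each pair must be internally balanced, but the two common sums $s$ and $s'$ need not agree.) Applying the lemma with $n'-m'=(n_{0}+1)-n_{0}=1$ yields
\[
    1\cdot\bigl(C(\bar{x})-C(\bar{y})\bigr)=(n-m)\bigl(C(\lambda)-C(\mu)\bigr)=(n-m)\tilde{g}
\]
by the definition of $\tilde{g}$, which is exactly the desired identity.

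I do not anticipate any real obstacle here: all the geometry has already been absorbed into \cref{lem:allCases} (and therefore into the combinatorial lemma that invokes it), so what remains is purely algebraic bookkeeping. The only minor thing to verify is that the canonical pair genuinely witnesses the hypotheses of the previous lemma, which is immediate from \cref{lem45} and the construction of $(\lambda,\mu)$. The real work of this subsection was building the canonical pair in the first place precisely so that this step becomes a one-line application.
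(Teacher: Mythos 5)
Your proof is correct and takes essentially the same route as the paper: both instantiate the preceding Kanovei--Reeken ratio identity with the canonical pair $(\lambda,\mu)$ as the primed sequences so that $n'-m'=1$, then read off $C(\bar x)-C(\bar y)=(n-m)\tilde g$. You are somewhat more careful than the paper in explicitly dispatching the degenerate case $n=m$ (where the ratio lemma's hypothesis $m<n$ fails but the claim is immediate from \cref{lem:allCases}), which is a small but welcome addition.
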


\begin{proof}
	\Cref{lem45} implies $(n+1-n)(C(\bar{x}) - C(\bar{y})) = (n-m)(C(\lambda) - C(\mu)) = (n-m)\tilde{g}$.
\end{proof}

Following \cite{MR1841758}, we define $C_1 \colon (\rr^2)^2 \rightarrow G$ given by
\[
	C_1(x,y) = C(x,y) - \tilde{g}.
\]
By induction, we have $C_1((\bar{x})_{(n+1)}) = C((\bar{x})_{(n+1)}) - n\tilde{g}$. Now, observe that $C_1(x,y) - C(x,y) = -\tilde{g}$, a Borel coboundary: to make this explicit, write $B(x) = -\tilde{g}$ so that
\[
	C_B(x,y) = B(x) + B(y) - B(x+y) = -\tilde{g}	.
\]
Hence it suffices to prove that $C_1$ is a Borel coboundary. Luckily, $C_1$ satisfies stronger closure properties than $C$.

\begin{lem}[{\cite{MR1841758}} Corollary 54]\label{lem:KR2}
	If $(\bar{x})_n, (\bar{y})_m \in p^*$ satisfy $\sum (\bar{x})_n = \sum (\bar{y})_m$ and $n \geq m$ then $C_1(\bar{x}) = C_1(\bar{y})$.
\end{lem}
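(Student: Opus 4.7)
The proof is a short algebraic consequence of \cref{lem765} together with the definition of $C_1$. My plan is to first record the generalisation of the identity $C_1((\bar{x})_{(n+1)}) = C((\bar{x})_{(n+1)}) - n\tilde{g}$ already noted above to sequences of arbitrary length, and then compute the difference $C_1(\bar{x}) - C_1(\bar{y})$ directly.

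More precisely, the first step is to prove by induction on $n \geq 2$ that for every sequence $(\bar{x})_n$ we have
\[
   C_1((\bar{x})_n) = C((\bar{x})_n) - (n-1)\tilde{g}.
\]
The base case $n = 2$ is just the definition $C_1(x,y) = C(x,y) - \tilde{g}$. For the inductive step, using the recursive extension of cocycles to more arguments together with the definition of $C_1$, one obtains
\[
   C_1((\bar{x})_{n+1}) = C_1((\bar{x})_n) + C_1(\textstyle\sum(\bar{x})_n, x_{n+1}) = C((\bar{x})_n) - (n-1)\tilde{g} + C(\textstyle\sum(\bar{x})_n, x_{n+1}) - \tilde{g},
\]
which rearranges to $C((\bar{x})_{n+1}) - n\tilde{g}$, as required.

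With this formula in hand, the second step is immediate: applying it to both $(\bar{x})_n$ and $(\bar{y})_m$ gives
\[
   C_1(\bar{x}) - C_1(\bar{y}) = \bigl(C(\bar{x}) - C(\bar{y})\bigr) - (n-m)\tilde{g}.
\]
The hypothesis $\sum(\bar{x})_n = \sum(\bar{y})_m$ together with $n \geq m$ allows us to invoke \cref{lem765}, which yields $C(\bar{x}) - C(\bar{y}) = (n-m)\tilde{g}$. Substituting this in gives $C_1(\bar{x}) - C_1(\bar{y}) = 0$, which is the desired conclusion.

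Since the genuine combinatorial and geometric content has already been absorbed into \cref{lem:allCases}, \cref{lem45}, and \cref{lem765}, there is no real obstacle here: the proof is a bookkeeping computation, and I do not expect any step to require additional work beyond the straightforward induction. The only point worth double-checking is that the inductive identity applies uniformly for all lengths $n \geq 2$ so that the final cancellation of $(n-m)\tilde{g}$ is valid, which is confirmed by the induction above.
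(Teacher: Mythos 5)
Your proof is correct and takes essentially the same route as the paper's: it reduces to \cref{lem765} via the identity $C_1((\bar{x})_n) = C((\bar{x})_n) - (n-1)\tilde{g}$ (which the paper merely cites as ``the inductive remark'' while you spell out the straightforward induction). No gaps.
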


\begin{proof}
	By the inductive remark, \cref{lem765} proves
	\begin{align*}
		C_1(\bar{x}) - C_1(\bar{y}) &= C(\bar{x}) - C(\bar{y}) - (n-1)\tilde{g} + (m-1)\tilde{g}\\
		&= C(\tilde{x}) - C(\tilde{y}) - (n-m)\tilde{g} = 0. \qedhere
	\end{align*}
\end{proof}

\subsection{Trivialising on a closed set}\label{sec:trivClosedSet}
From the difference in complexity between our \cref{lem:allCases} and Lemma 52 in \cite{MR1841758}, it is clear that the case $\rr^2$ is not an immediate consequence of the theorem in the case $\rr$. The topological differences between the spaces are emphasised in this section, where we overcome obstacles not present in the 1-dimensional case. These can be reduced to the properties of a set $t(x)$ associated with each $x \in \rr^2$. Trivialising on $t(x)$---similar to trivialising on $[M,\infty)$ in \cite{MR1841758}---suffices to prove the theorem.

\medskip

We begin by identifying a well-behaved subset of $\rr^2$ on which the invariance properties of $C_2$ apply more broadly. For $k < \omega$, let
\[
	k \cdot p = \set{(kx_1,kx_2)}{(x_1,x_2) \in p}.
\]
	As we assumed that $p$ lies in the first quadrant, there exist $M < \omega$ and distinct straight lines $L_1,L_2$ such that:
\begin{itemize}
	\item there exists a straight line $L$ through the origin for which, for some $\tilde{z} = (z_1,z_2) \in \rr^2$, the infinite line segment
	\[
		L^{\tilde{z}} = \set{(x_1,x_2) \in L}{x_1 \geq z_1 \land x_2 \geq z_2}
	\]
	is contained in $p^+ = \bigcup_{k > M} k \cdot p$;
	\item $L_1$ and $L_2$ are neither equal nor parallel, $L_1 \cap L_2 \subset p^+$, and the region bounded by $L_1,L_2$  which is wholly contained in the first quadrant is also contained in $p^+$.
\end{itemize}

We show below that such points and lines always exist. We need some notation. In $\rr^N$, denote the origin by $\origin$. Given a condition $p$, denote its topological closure by $\overline{p}$, and its topological boundary by $\del{p}$. Let $d$ denote the Euclidean metric.

\begin{lem}\label{lem:allExist}
	Such $M < \omega$, straight lines $L,L_1,L_2$, and $\tilde{z} \in \rr^2$ exist.
\end{lem}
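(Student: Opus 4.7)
The plan is to exhibit the required $M$, $L_1, L_2$, $L$, and $\tilde z$ by exploiting the fact that the dilations $k\cdot p$ eventually overlap to form a cone-like region stretching to infinity. Writing $p = (a_1, b_1) \times (a_2, b_2)$ with $0 < a_i < b_i$ (possible since $p$ lies in the first quadrant), I have $k\cdot p = (ka_1, kb_1)\times(ka_2, kb_2)$, and consecutive dilations overlap in each coordinate exactly when $k > a_i/(b_i - a_i)$ for $i = 1, 2$. I therefore fix $M_0 \in \omega$ exceeding both ratios, so that $\bigcup_{k > M_0} k\cdot p$ is path-connected.

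For the line $L$, I will pick any slope $\sigma$ with $a_2/b_1 < \sigma < b_2/a_1$ and let $L$ be the line through $\origin$ of slope $\sigma$. A direct computation shows that the ray $L \cap \{x > 0\}$ meets $k \cdot p$ in the $x$-interval $[k u(\sigma),\, k v(\sigma)]$, where $u(\sigma) = \max(a_1, a_2/\sigma)$ and $v(\sigma) = \min(b_1, b_2/\sigma)$; since $\sigma$ lies strictly between the boundary slopes, $u(\sigma) < v(\sigma)$. Once $k$ is large enough that $(k+1)u(\sigma) < kv(\sigma)$, consecutive intervals overlap and their union is a half-line, so every point of $L$ beyond some $\tilde z \in L$ lies in $p^+$.

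To produce $L_1$ and $L_2$, I fix $s_1 < s_2$ with $a_2/b_1 < s_1 < \sigma < s_2 < b_2/a_1$, so that both slopes sit strictly inside the admissible range. The crucial uniformisation is that $u$ and $v$ extend continuously to $[s_1, s_2]$ with $u < v$ throughout, hence by compactness there is a single $M \geq M_0$ and a radius $R_0$ such that the closed sector $S = \{(x,y) : s_1 \le y/x \le s_2 \text{ and } x \ge R_0\}$ is contained in $p^+ := \bigcup_{k > M} k \cdot p$. I then pick a point $P$ on $L$ with $P_x > R_0$ (which automatically lies in $p^+$) and let $L_1, L_2$ be the lines through $P$ of slopes $s_1, s_2$, so $L_1 \cap L_2 = \{P\} \subset p^+$. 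The ``forward wedge''
\[
	W = \{Q \in \rr^2 : Q_x > P_x \text{ and } s_1(Q_x - P_x) < Q_y - P_y < s_2(Q_x - P_x)\}
\]
is one of the four open regions cut out by $L_1$ and $L_2$; a short calculation writes $Q_y/Q_x$ as a convex combination of $\sigma$ and some $s' \in (s_1, s_2)$, hence $Q_y/Q_x \in (s_1, s_2)$. Together with $Q_x > P_x > R_0$, this yields $W \subset S \subset p^+$, and a case analysis shows that the other three open regions each meet the complement of the first quadrant.

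The principal obstacle is the uniformisation step, namely finding a single entry radius $R_0$ valid for every slope in $[s_1, s_2]$ rather than a slope-dependent one. This reduces to continuity of $u$ and $v$ on a compact interval strictly away from the boundary slopes $a_2/b_1$ and $b_2/a_1$; the remaining steps are direct computations in the plane and explain why the choice of $s_1, s_2$ must be strictly interior to the admissible slope range.
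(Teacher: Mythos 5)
Your proof is correct and takes essentially the same route as the paper: show the dilations $k\cdot p$ eventually overlap, then carve out a slope-sector inside $p^+ = \bigcup_{k>M} k\cdot p$ to supply the required ray and wedge. You are in fact more explicit than the paper's sketch---spelling out the admissible slope interval $(a_2/b_1,\, b_2/a_1)$, the entry-interval formulas $u(\sigma), v(\sigma)$, the compactness argument giving a uniform radius $R_0$, and the verification that the forward wedge is the unique one of the four regions lying wholly in the first quadrant and that it sits inside $p^+$---whereas the paper simply selects $L_1, L_2$ through $\tilde z$ parallel to the lines $\origin v_1^1$ and $\origin v_1^3$ (slopes $a_2/a_1, b_2/b_1$, which do lie in your admissible range) and leaves the wedge containment unverified.
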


\begin{proof}
Consider the sequence $(k \cdot p)_{k < \omega}$ of conditions (read open sets), where each $k \cdot p$ has vertices~$v_k^1,v_k^2,v_k^3,v_k^4$, labelled anti-clockwise starting with the vertex closest to the origin. Since $p$ is open, we may assume that $p$ is not a square. Hence suppose that $\pi_0(v_1^2) - \pi_0(v_1^1) > \pi_1(v_1^3) - \pi_1(v_1^2)$ (i.e.\ ~$p$ is wider than it is high); the other case is similar. Define auxiliary straight lines~$L'_1,L'_2$ by
\[
	\{ \origin, v_1^1 \} \subset L'_1 \; \text{ and } \; 
	\{ \origin, v_1^3 \} \subset L'_2.
\]
Note that $\set{v^1_k}{k < \omega} \subset L'_1$ and $\set{v^3_k}{k < \omega} \subset L'_2$. Let $v'$ be the unique point at which $\del{p} \setminus \{ v_1^3 \}$ and $L'_2$ intersect. Now, define $w$ to be the unique $v \in \del{p}$ for which $d(v_1^1,v) = d(v',v)$ and for which this distance is minimal. That is, $w \in \del{p}$ and sits exactly halfway between $v_1^1$ and $v'$. Let $L$ be the straight line satisfying
\[
	\{ \origin, w \} \subset L.
\]
To see that for some $M < \omega$ we have $(M \cdot p) \cap ((M+1) \cdot p) \neq \emptyset$, assume w.l.o.g.\ that $p$ is wider than it is high. Write $v_1^1 = (x_1,x_2)$ and $v_1^4 = (y_1,y_2)$. Then $M < \omega$ is as needed if
\[
	Mx_1 < (M+1)x_1 < My_1 \iff M > \frac{x_1}{y_1-x_1}.
\]
To complete the proof, fix $\tilde{z} \in L \cap (M \cdot p)$ for a sufficiently large~$M$. Choose~$L_1$ to be the unique line parallel to $L'_1$ containing $\tilde{z}$, and define $L_2$ similarly.
\end{proof}

In the 2-dimensional case, the definition of $T$ is best given in polar coordinates: let $L_1$ be the unique straight line through $\origin$ with angle $\theta_1$, and define $L_2$ and $\theta_2$ similarly. Suppose that~$\tilde{z} = (s,\varphi)$.

\begin{dfn}\label{dfn:T}
	With $\tilde{z}$, $L_1$, $L_2$, and $L$ given as above, define
	\[
		T = \set{(r,\theta) \in \rr^2}{r \geq s \land \theta_2 \leq \theta \leq \theta_1}.
	\]
\end{dfn}

One can think of $T$ as the \emph{cone} induced by the lines $L_1',L_2'$ shifted to $\tilde{z}$. Observe that $\tilde{z} \in T$. By construction, we may choose $\tilde{z}$ to not be contained in $p$, which we assume from now on. Hence~$T \cap p = \emptyset$.

\begin{lem}\label{Tprops}
	With $T$ defined as above, the following hold:
	\begin{enumerate}[label=(\alph*)]
		\item\label{TWellDef} $T$ is non-empty and closed (hence in particular Borel).
		\item\label{TLz} $L^{\tilde{z}} \subset T$.
		\item\label{TAddition} $T$ is closed under addition.
		\item\label{TInfDiam} The interior of $T$ has infinite diameter: for every $K < \omega$ there exists $y \in T$ such that the open ball $B_K(y)$ of radius $K$ and centre $y$ is contained in $T$.
	\end{enumerate}
\end{lem}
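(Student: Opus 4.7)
The plan is to verify the four properties directly from the polar description of $T$ and the construction of $\tilde z$, $L$, $L_1$, $L_2$ supplied by \cref{lem:allExist}. Parts \ref{TWellDef} and \ref{TLz} are essentially immediate. The angular condition $\theta_2 \leq \theta \leq \theta_1$ cuts out a closed sector, the condition $r \geq s$ cuts out the complement of an open disk about $\origin$, and $T$ is their intersection---hence closed, and so in particular Borel. Non-emptiness follows because $\tilde{z} = (s,\varphi) \in T$: by construction in \cref{lem:allExist}, the line $L$ through the origin and $\tilde z$ lies between $L_1$ and $L_2$, so $\theta_2 \leq \varphi \leq \theta_1$. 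For \ref{TLz}, every point of $L^{\tilde z}$ lies on $L$, hence has polar angle $\varphi \in [\theta_2,\theta_1]$ and Euclidean distance from the origin at least $\|\tilde z\| = s$.

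For \ref{TAddition}, the decisive observation is that because $p$ lies in the first quadrant, the lines $L_1, L_2$ have angles $\theta_1,\theta_2 \in (0,\pi/2)$, so the sector $\{\theta_2 \leq \theta \leq \theta_1\}$ is a convex cone with apex at $\origin$ of angular width strictly less than $\pi/2$. Given $y_1, y_2 \in T$, write $y_i = r_i(\cos\theta_i', \sin\theta_i')$ with $r_i \geq s$ and $\theta_i' \in [\theta_2,\theta_1]$. Since this sector is a convex cone containing $y_1$ and $y_2$, it also contains $y_1 + y_2$. For the radial condition, since $|\theta_1' - \theta_2'| < \pi/2$ we have $\langle y_1, y_2 \rangle \geq 0$, and hence
\[
    |y_1 + y_2|^2 = |y_1|^2 + |y_2|^2 + 2\langle y_1, y_2\rangle \geq |y_1|^2 \geq s^2,
\]
so $y_1 + y_2 \in T$.

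For \ref{TInfDiam}, I would consider points of the form $y = (r,\varphi)$ along the interior ray $L$ for large $r$. The Euclidean distance from $y$ to either bounding ray $L_j$ equals $r \sin|\varphi - \theta_j|$, and since $\varphi$ lies strictly between $\theta_2$ and $\theta_1$ these quantities grow linearly in $r$. Similarly, the distance from $y$ to the circle $\{r = s\}$ is $r - s$. Hence for any $K < \omega$, choosing $r$ sufficiently large forces $B_K(y) \subseteq T$. The only real technical point throughout is part \ref{TAddition}: everything there hinges on the first-quadrant restriction, which keeps the angular width of the sector below $\pi/2$ and so simultaneously ensures the convex-cone property (for the angle) and the non-negativity of the inner product (for the radius). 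The remaining parts are routine verifications from the polar definition.
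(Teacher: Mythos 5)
Your proof is correct and follows the same approach the paper takes — reading all four properties off the polar description of $T$ from \cref{dfn:T} — but you supply the details the paper's terse proof merely asserts as routine, most notably the observation that the first-quadrant restriction bounds the angular width by $\pi/2$, which simultaneously yields the convex-cone containment and the nonnegative inner product needed for \ref{TAddition}. The verification of \ref{TInfDiam} via the linear growth in $r$ of the distances $r\sin|\varphi-\theta_j|$ and $r-s$ to the bounding rays and arc likewise makes explicit what the paper compresses into ``$L_1$ and $L_2$ are not parallel.''
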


\begin{proof}
	These are all routine. It is clear from the construction that (a) $T$ is non-empty and closed (it is the closure of the area bounded by $L_1$ and $L_2$ in the direction of $(\infty,\infty)$), and similarly~(b) is immediate. The fact (c) that~$T$ is closed under addition follows easily from the construction of $T$ and by using polar coordinates. Since $L_1$ and $L_2$ are not parallel, the interior of~$T$ must have infinite diameter, proving (d).
\end{proof}

Importantly, every real in $T$ can be expressed as a sum of generics:

\begin{lem}\label{sumGen}
	If $z \in T$ then there exists a sequence of generics $(\bar{x})_n$ for which $z = \sum \bar{x}$.
\end{lem}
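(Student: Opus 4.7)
The plan is to start from the trivial (but generally non-generic) decomposition $z = n\cdot(z/n)$ and perturb it into a decomposition by $M$-generics. Since $z\in T\subset p^+ = \bigcup_{k>M} k\cdot p$, pick $n>M$ with $w := z/n \in p$. By openness of $p$ and continuity of the sum, there is an open ball $U\subset p$ around $w$ small enough that whenever $(y_1,\dots,y_{n-1})\in U^{n-1}$, the residual vector $z-\sum_{i<n}y_i$ still lies in $p$ (it equals $w$ at the base point).

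The crucial move is to pass from $M$ to $M[z]$. Since $M[z]$ is still a countable transitive model, the $M[z]$-generic tuples for the product forcing $\pp_2^{n-1}$ form a comeagre subset of $U^{n-1}$; pick one and call it $(x_1,\dots,x_{n-1})$. By iterating \cref{prodForce} relativised to $M[z]$, each coordinate $x_i$ is $M[z]$-generic for $\pp_2$, and therefore $M$-generic, so $x_1,\dots,x_{n-1}\in p^*$.

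Set $x_n := z - \sum_{i<n} x_i$. By choice of $U$, $x_n\in p$. To see $x_n\in p^*$, consider the map
\[
\Phi(y_1,\dots,y_{n-1}) = \bigl(y_1,\dots,y_{n-2},\,z - \textstyle\sum_{i<n} y_i\bigr),
\]
a self-homeomorphism of $(\rr^2)^{n-1}$ whose Borel code lies in $M[z]$. Because $\Phi$ and $\Phi^{-1}$ pull $M[z]$-coded dense opens back to $M[z]$-coded dense opens, $\Phi$ sends $M[z]$-generic tuples to $M[z]$-generic tuples. Applied to $(x_1,\dots,x_{n-1})$, it yields the $M[z]$-generic tuple $(x_1,\dots,x_{n-2},x_n)$; in particular, $x_n$ is $M[z]$-generic (hence $M$-generic) for $\pp_2$, so $x_n\in p^*$ and $z=x_1+\dots+x_n$ is the required decomposition.

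The principal obstacle is precisely that $x_n$ is built from the other $x_i$ by a shift involving the non-generic parameter $z$, and $M$-genericity is not preserved by such shifts. The resolution is to work upstream with $M[z]$-genericity, where the shift by $z$ is $M[z]$-definable and therefore does preserve genericity; this is the only point where the argument differs from a naive product-forcing construction.
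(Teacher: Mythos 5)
Your argument is correct and follows essentially the same route as the paper: perturb the trivial decomposition $z = n\,(z/n)$ into one by generics, with the final term handled via an affine homeomorphism built from the parameter $z$ (in the paper this appears, for $n=2$, as the symmetric split $x\pm\epsilon$, so that $x_2 = z - x_1$). You are in fact more careful than the paper on the key point: the paper asks only for $\epsilon$ to be ``$M$-generic'' and asserts $x\pm\epsilon\in r^*$, which is not automatic since $x$ need not lie in $M$ --- one genuinely needs $\epsilon$ (equivalently your tuple) to be generic over $M[x]=M[z]$, precisely the move you make explicit; moreover the paper proves only the base case $n=2$ and gestures at an induction, while your construction handles all $n$ at once.
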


\begin{proof}
	Let $z = 2x$ where $x \in p \setminus p^*$. We find $x_1,x_2 \in p^*$ for which $z = x_1 + x_2$. Let $r < p$ contain~$x$, and let $\epsilon$ be $M$-generic and sufficiently small so that $x_1 = x + \epsilon$ and $x_2  = x - \epsilon$ are elements of~$r^*$. Now $z = x_1 + x_2$ as required, and it follows by induction that every $z \in k \cdot p$ (for any $k \geq 2$) can be written in the required form. Since $T \subset p^+$, the lemma is proven.
\end{proof}

We begin to trivialise cocycles to coboundaries. As in \cite{MR1841758}, \cref{lem:KR2} shows that the map~$F \colon \rr^2 \rightarrow G$ defined by
\[
	F(x) = \begin{cases}
		C_1(x_1,\ldots,x_n) & \text{ if $x = \sum (\bar{x})_n \in T$}\\
		0 & \text{ otherwise}
	\end{cases}
\]
for $(\bar{x})_n \in p^*$ is well-defined. Further, as $C_1$ is Borel, the graph of $F$ is clearly analytic; hence $F$ is a Borel function and $C_F$ is a Borel coboundary. Now define
\[
	C_2(x_1,x_2) = C_1(x_1,x_2) + C_F(x_1,x_2).
\]

\begin{lem}\label{prevLem2}
	If $(\bar{x})_n \in p^*$ and $\sum (\bar{x})_n \in T$ then $C_2(\bar{x}) = 0$. 
\end{lem}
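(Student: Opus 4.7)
The proof is a short computation once the right algebraic facts are assembled, so my plan is to expand the $n$-ary versions of $C_1$, $C_F$, and $C_2$ and check that the terms cancel.

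First I would invoke Lemma \ref{lastAlgPrp} (extended to sums of cocycles, applied term-by-term) to conclude that on $n$-tuples one has
\[
	C_2(\bar{x}) = C_1(\bar{x}) + C_F(\bar{x}),
\]
where the left-hand side uses the recursive definition of $C_2$ on $n$-tuples and similarly on the right. Since $C_F$ is a coboundary induced by $F$, Lemma \ref{alg1} gives
\[
	C_F(\bar{x}) = F(x_1) + \cdots + F(x_n) - F(x_1 + \cdots + x_n).
\]

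Next I would exploit the two key properties of the partition underlying $F$. On the one hand, each $x_i \in p^*$; but the text explicitly notes that $\tilde{z}$ is chosen so that $T \cap p = \emptyset$, and hence $x_i \notin T$. By the definition of $F$, this forces $F(x_i) = 0$ for every $i \leq n$. On the other hand, $s = \sum(\bar{x})_n \in T$ by assumption, and $(\bar{x})_n$ is exactly a witnessing decomposition of $s$ as a sum of generics in $p^*$; by the well-definedness of $F$ (which is precisely Lemma \ref{lem:KR2} applied to any two such decompositions), we get
\[
	F(s) = C_1(x_1, \ldots, x_n) = C_1(\bar{x}).
\]

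Combining the two identities, $C_F(\bar{x}) = -C_1(\bar{x})$, whence
\[
	C_2(\bar{x}) = C_1(\bar{x}) + C_F(\bar{x}) = C_1(\bar{x}) - C_1(\bar{x}) = 0,
\]
which is the claim. There is really no obstacle here: the lemma amounts to the observation that $F$ has been tailored so that $C_F$ cancels $C_1$ exactly on tuples in $p^*$ whose sum lands in $T$. The only point one should double-check is the disjointness $T \cap p = \emptyset$ (so that each $F(x_i)$ vanishes), but this was arranged by the choice of $\tilde{z}$ immediately after \Cref{dfn:T}.
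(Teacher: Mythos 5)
Your proof is correct and follows the same essential approach as the paper: expand $C_2 = C_1 + C_F$ on $n$-tuples, use Lemma~\ref{alg1} to write $C_F(\bar{x})$ as $\sum_i F(x_i) - F(\sum\bar{x})$, observe that each $F(x_i)$ vanishes because $x_i \in p$ and $T \cap p = \emptyset$, and finally note that $F(\sum\bar{x}) = C_1(\bar{x})$ by the definition of $F$ (whose well-definedness is guaranteed by Lemma~\ref{lem:KR2}). The paper nominally phrases this as an induction with a base case $n=2$, but the induction wrapper is inessential --- the displayed computation in the paper is exactly your direct argument, and in fact the paper leaves implicit the final step $F(\sum\bar{x}) = C_1(\bar{x})$, which you correctly make explicit. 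So your version is the same proof, written out slightly more carefully.
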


\begin{proof}
	By induction. For $n=2$, it is easily seen that $C_2(x_1,x_2) = F(x_1) + F(x_2) = 0$ by definition of $F$ and since $T \cap p = \emptyset$. Thus, \cref{alg1} implies
	\begin{align*}
	C_2(x_1,\ldots,x_{n+1}) &= C_1(x_1,\ldots,x_{n+1}) + C_F(x_1,\ldots,x_{n+1})\\
	&= C_1(x_1,\ldots,x_{n+1}) - F(x_1 + \ldots + x_{n+1}). \qedhere
	\end{align*}
\end{proof}

Importantly, the invariance of $C_2$ on generics in fact extends to all of~$T$, which follows from the combinatorial properties of abelian cocycles.

\begin{cor}[{\cite{MR1841758}} Lemma 55]\label{corRRRR}
	If $x,y \in T$ then $C_2(x,y) = 0$.
\end{cor}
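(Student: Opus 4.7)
The plan is to reduce the statement for arbitrary $x,y \in T$ to the case already handled in \cref{prevLem2}, by decomposing $x$ and $y$ as finite sums of generics in $p^*$ and using the multi-argument identity for cocycles.

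First I would use \cref{sumGen} to write $x = a_1 + \ldots + a_n$ and $y = b_1 + \ldots + b_m$ with each $a_i, b_j \in p^*$. (Note that $T \cap p = \emptyset$ by choice of $\tilde{z}$, so in particular $n, m \geq 2$.) Since $T$ is closed under addition by \cref{Tprops}\ref{TAddition}, we have $x, y, x+y \in T$. Next, I would observe that $C_2 = C_1 + C_F$ is itself an abelian cocycle, being the sum of a cocycle and a coboundary; hence the multi-argument extension from \cref{sec:identities} applies to $C_2$, giving
\[
    C_2(a_1,\ldots,a_n,b_1,\ldots,b_m) = C_2(a_1,\ldots,a_n) + C_2(b_1,\ldots,b_m) + C_2(x,y),
\]
where I have used $\sum_i a_i = x$ and $\sum_j b_j = y$.

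Now I apply \cref{prevLem2} three times. The sequence $(a_1,\ldots,a_n,b_1,\ldots,b_m)$ lies entirely in $p^*$ and its sum equals $x + y \in T$, so the left-hand side vanishes. Likewise, $(a_1,\ldots,a_n) \in p^*$ with sum $x \in T$, and $(b_1,\ldots,b_m) \in p^*$ with sum $y \in T$, so the first two terms on the right-hand side vanish as well. Rearranging gives $C_2(x,y) = 0$, as desired.

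I do not anticipate any real obstacle here: all the geometric and combinatorial work has been packaged into \cref{Tprops}, \cref{sumGen}, and \cref{prevLem2}, so the present corollary is a short bookkeeping argument. The only subtlety worth checking is that $C_2$ genuinely satisfies the cocycle identity (so that its recursive multi-argument extension behaves correctly), which follows immediately from \cref{lastAlgPrp} applied to $C_1$ and the coboundary $C_F$.
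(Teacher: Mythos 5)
Your proof is correct and takes essentially the same route as the paper: decompose $x$ and $y$ into sums of generics via \cref{sumGen}, invoke the multi-argument cocycle identity for $C_2$ to isolate $C_2(x,y)$, and annihilate the three auxiliary terms with \cref{prevLem2}. The paper's proof is the same bookkeeping, stated more tersely; your extra remark that $C_2$ inherits the cocycle identity (so the multi-argument extension applies) is a point the paper leaves implicit and is worth having spelled out.
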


\begin{proof}
	Suppose $x = \sum (\bar{x})_n$ and $y = \sum (\bar{y})_m$, all generics in $p$ (this is possible by \cref{sumGen}). By the algebraic properties of $C_2$ and the previous lemma we have
	\begin{align*}
		C_2(\bar{x},\bar{y}) - C_2(\bar{x}) - C_2(\bar{y}) &= C_2(x,y)
	\end{align*}
	where all terms on the left-hand side equal $0$.
\end{proof}

We are now ready to define the aforementioned class of families $t(x)$.

\begin{dfn}\label{dfntxxx}
For every $x \in \rr^2$, let $t(x) \subset \rr^2$ be defined by
\[
	t(x) = \set{y \in T}{x + y \in T}.
\]
\end{dfn}

\begin{lem}\label{lem:sump}
	Let $x,x',w \in \rr^2$.
	\begin{enumerate}[label=(\alph*)]
		\item $t(x) \neq \emptyset$
		\item $t(x) \cap t(x') \neq \emptyset$
		\item\label{t3} If $x,x' \in T$ then there exist $z,z' \in t(x) \cap t(x') \cap t(w)$ for which we have $x + z = x' + z'$.
	\end{enumerate}
\end{lem}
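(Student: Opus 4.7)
The plan is to prove all three parts by combining the two key geometric features of $T$ recorded in \cref{Tprops}: additive closure and infinite-diameter interior. The unifying observation is that for any finite list of vectors $v_1,\dots,v_k \in \rr^2$, the infinite-diameter property lets me pick $y \in T$ together with $K > \max_i |v_i|$ such that $B_K(y) \subset T$; every translate $y + v_i$ then automatically lies in $B_K(y) \subset T$.

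Parts (a) and (b) follow immediately. For (a), given $x \in \rr^2$, I choose $y \in T$ with $B_K(y) \subset T$ for some $K > |x|$; then $x + y \in B_K(y) \subset T$, so $y \in t(x)$. For (b), I apply the same idea with both $x$ and $x'$: a sufficiently deep $y \in T$, obtained by taking $K > \max(|x|, |x'|)$, satisfies $x + y, x' + y \in T$, hence $y \in t(x) \cap t(x')$.

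For (c), the equation $x + z = x' + z'$ forces $z' = z + (x - x')$, leaving only one free variable. Since $x, x' \in T$, additive closure automatically gives $x + z, x' + z, x + z', x' + z' \in T$ whenever $z, z' \in T$, so all memberships involving $x$ and $x'$ are free. The remaining conditions are $z, z' \in T$ together with $w + z, w + z' \in T$. I will choose $K > \max(|x - x'|, |w|, |w + x - x'|)$, fix $y \in T$ with $B_K(y) \subset T$, and set $z = y$ and $z' = y + (x - x')$; then $z, z', w + z$, and $w + z'$ all lie in $B_K(y) \subset T$, and $x + z = x' + z'$ holds by construction. I do not anticipate any real obstacle: the statement is a quantitative packaging of the infinite-diameter property with additive closure, and the only care needed is in bookkeeping the finitely many translates that must remain inside $T$.
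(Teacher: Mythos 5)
Your proof is correct and, at its core, uses the same two properties (additive closure of $T$ and the infinite-diameter / ball-containment property from \cref{Tprops}) as the paper's proof. For part (a) you give a single uniform argument via a ball $B_K(y) \subset T$ with $K > |x|$, whereas the paper splits into three cases ($x \in T$, $x \in L \setminus T$, and the remaining case) before arriving at essentially the same ball argument; your version is slightly cleaner since the last case already covers everything. For part (c) the paper works with the shifted sets $T_x = \set{x + y}{y \in T}$, claims that $T_x \cap T_{x'}$ is non-empty and has infinite diameter, and then picks a point $v$ with $B_{|w|+1}(v) \subset T_x \cap T_{x'}$ whose decomposition gives the desired $z,z'$; you instead eliminate the free variable via $z' = z + (x - x')$ and pick $z$ directly as the center of a large enough ball inside $T$, then verify each of the required memberships explicitly. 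Both are correct; yours avoids having to justify the auxiliary claim about $T_x \cap T_{x'}$ and spells out the bookkeeping that the paper compresses into ``any element $\ldots$ can be decomposed as desired,'' at the cost of one extra bound ($K > |w + x - x'|$) in the choice of radius.
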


\begin{proof}
	For (a), let $x \in \rr^2$ be given. If $x \in T$ then $x+x \in T$, thus $x \in t(x)$. Similarly, if $x \in L \setminus T$ (with $L$  defined as in \cref{lem:allExist}), take a sufficiently large $y \in L^{\tilde{z}} \subset T$ satisfying $x + y \in T$. Hence suppose $x \in \rr^2 \setminus (T \cup L)$. Recall that $T$ has infinite diameter (\cref{Tprops}~\ref{TInfDiam}). Hence let $z \in T$ be such that $B_{|x| + 1}(z) \subset T$. Then $x + z \in B_{|x| + 1}(z) \subset T$. Part (b) follows easily; in fact, the result holds for all finite intersections by a similar argument.
	
	For part (c), let $T_x = \set{x + y}{y \in T}$ be the shift of $T$ to $x$. Since $L_1$ and $L_2$ (as defined in \cref{lem:allExist}) are not parallel, it is easily seen that if $x,x' \in T$ then $T_x \cap T_{x'} \neq \emptyset$, and in fact also has infinite diameter. As in (a) above, let $v \in T$ be such that $B_{|w| + 1}(v) \subset T_x \cap T_{x'}$. Then any element in~$B_{|w| + 1}(v)$ can be decomposed as desired.
\end{proof}

Since the set $T$ is closed by construction, the following is immediate.

\begin{cor}\label{lem:txCompl}
	For every $x \in \rr^2$, the set $t(x)$ is Borel.
\end{cor}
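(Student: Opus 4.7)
The plan is to observe that $t(x)$ can be expressed as a simple set-theoretic combination of two closed sets, whence Borelness is immediate. Specifically, $t(x) = T \cap (T - x)$, where $T - x = \set{y \in \rr^2}{x + y \in T}$ is the translate of $T$ by $-x$.

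The first factor, $T$ itself, is closed by \cref{Tprops}\ref{TWellDef}. For the second factor, note that the map $\tau_x \colon \rr^2 \rightarrow \rr^2$ defined by $\tau_x(y) = x + y$ is a homeomorphism (as translation in a topological group), and $T - x = \tau_x^{-1}(T)$. Hence $T - x$ is the preimage of a closed set under a continuous map, and is therefore closed. The intersection of two closed sets is closed, so $t(x)$ is closed, and in particular Borel.

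There is no substantive obstacle here; the statement is genuinely a direct consequence of \cref{Tprops}\ref{TWellDef} together with the fact that $\rr^2$ is a topological group. The point of recording it as a corollary is merely to flag that the families $t(x)$ from \cref{dfntxxx} inherit the good definability properties of $T$, which will be needed later when functions built from $t(x)$ must be shown to be Borel measurable.
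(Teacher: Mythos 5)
Your argument is correct and is exactly the (unwritten) reasoning behind the paper's one-line remark that the corollary is ``immediate'' from $T$ being closed: $t(x) = T \cap (T-x)$ is an intersection of two closed sets, hence closed, hence Borel. No discrepancy; you have simply spelled out the details the paper elides.
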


Define $J(x) = C_2(x,\tilde{x})$ where $\tilde{x} \in t(x)$. By our construction, the choice of $\tilde{x}$ is inconsequential, highlighting the $C_2$-invariance on $T$. To see this, first recall that
\[
	C_2(x,y) = C_1(x,y) - F(x+y) = C(x,y) - \tilde{g} - F(x+y) = C_2(y,x)
\]
since $\rr^2$ and $G$ are abelian, and since $C$ is an abelian cocycle. 

\begin{lem}\label{functionLDfn}
	$J$ is well-defined and a Borel function.
\end{lem}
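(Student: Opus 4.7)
The plan is to handle well-definedness and Borel-measurability separately, and both parts will turn on what has already been set up.

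For well-definedness, I would fix $x \in \rr^2$ and arbitrary $\tilde{x}, \tilde{x}' \in t(x)$, then apply \cref{algProp1} to $C_2$ at the triple $(x, \tilde{x}, \tilde{x}')$ to obtain
\[
    C_2(x, \tilde{x}) = C_2(x, \tilde{x}') + C_2(x + \tilde{x}', \tilde{x}) - C_2(x + \tilde{x}, \tilde{x}').
\]
By the definition of $t(x)$ each of $\tilde{x}, \tilde{x}', x + \tilde{x}, x + \tilde{x}'$ lies in $T$, so \cref{corRRRR} kills both correction terms and forces $C_2(x, \tilde{x}) = C_2(x, \tilde{x}')$. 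That $t(x) \neq \emptyset$ for every $x \in \rr^2$ is \cref{lem:sump}(a), which makes the definition sensible.

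For Borel-measurability, the quickest route is to show that the graph of $J$ is analytic, from which $J$ being Borel follows via the characterisation recorded in \cref{sec:background}. Writing
\[
    \graph(J) = \Set{(x, g) \in \rr^2 \times G}{\exists \tilde{x}\,(\tilde{x} \in T \wedge x + \tilde{x} \in T \wedge C_2(x, \tilde{x}) = g)},
\]
the bracketed condition defines a Borel subset of $\rr^2 \times \rr^2 \times G$: $T$ is closed by \cref{Tprops}\ref{TWellDef}, addition is continuous, $G$ is countable-discrete, and $C_2 = C_1 + C_F$ is Borel as a sum of Borel functions. Projecting $\tilde{x}$ out exhibits $\graph(J)$ as the continuous image of a Borel set, hence as an analytic set, and so $J$ is Borel.

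Alternatively, for each $g \in G$ one may describe $J^{-1}(g)$ both existentially (yielding a $\Sigma^1_1$ set) and—using well-definedness together with non-emptiness of $t(x)$—universally over $\tilde{x} \in t(x)$ (yielding a $\Pi^1_1$ set). Suslin's theorem then gives $J^{-1}(g)$ Borel, and since $G$ is countable this again implies $J$ is Borel. The main (mild) obstacle is simply spotting that well-definedness collapses via \cref{algProp1} into exactly two applications of \cref{corRRRR}; the symmetry $\tilde{x},\tilde{x}' \in t(x)$ automatically places both correction terms inside $T \times T$, and everything that remains is routine descriptive set theory.
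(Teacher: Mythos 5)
Your proof is correct and takes a genuinely different, and in fact cleaner, route than the paper's on both counts. For well-definedness, the paper first establishes $J(x) = C_2(\tilde{x}+z,x)$ for arbitrary auxiliary $z \in t(x)$ and then invokes \cref{lem:sump}~\ref{t3} to produce $z,z'$ with $\tilde{x}+z = \tilde{x}'+z'$, whereas you apply \cref{algProp1} directly at the triple $(x,\tilde{x},\tilde{x}')$: the two correction terms land in $T \times T$ automatically (via $\tilde{x},\tilde{x}' \in T$ and $x+\tilde{x},x+\tilde{x}' \in T$), so \cref{corRRRR} finishes in one step with no auxiliary elements. This bypasses \cref{lem:sump}~\ref{t3} entirely, which the paper otherwise needs only here. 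For Borel-measurability, the paper constructs an explicit Borel uniformising function $\rho \colon \rr^2 \to \rr^2$ picking a canonical witness along $L^{\tilde{z}}$, and verifies its Borelness by a somewhat delicate computation invoking \cref{prp:BorelDefn}; you instead project the Borel relation $\bigl\{(x,\tilde{x},g) : \tilde{x} \in T, \; x+\tilde{x} \in T, \; C_2(x,\tilde{x}) = g\bigr\}$ to exhibit $\graph(J)$ as analytic and conclude via the characterisation already recorded in \cref{sec:background}, with the Suslin-theorem variant as a backup. Both of your routes are sound. The one thing your approach does not deliver, and which the paper's $\rho$ nominally provides, is a \emph{canonical} Borel witness $\tilde{x} = \rho(x)$ used notationally in the closing computation of \cref{sec:trivClosedSet}; but since $J$ is well-defined, that computation only ever needs $C_2(x,z) = J(x)$ for an arbitrary $z \in t(x)$, not a Borel selection, so nothing downstream actually depends on $\rho$ being Borel, and your shorter argument suffices.
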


\begin{proof}
	By \cref{algProp1},
	\[
		J(x) = C_2(x,\tilde{x}) = C_2(\tilde{x},x) = C_2(\tilde{x},z) + C_2(\tilde{x}+z,x) - C_2(\tilde{x} + x,z)
	\]
for every $z \in \rr^2$. Hence if $z \in t(x)$ then \cref{Tprops}~\ref{TAddition} and \cref{corRRRR} imply $J(x) = C_2(\tilde{x} + z,x)$. Now suppose $\tilde{x},\tilde{x}' \in t(x)$, and, using \cref{lem:sump}~\ref{t3}, let $z,z' \in t(x) \cap t(\tilde{x}) \cap t(\tilde{x}')$ be such that $\tilde{x} + z = \tilde{x}' + z'.$ Then
\[
	C_2(\tilde{x},x) = C_2(\tilde{x} + z,x) = C_2(\tilde{x}' + z',x) = C_2(\tilde{x}',x).
	\]

To see that $J$ is Borel as a map, first note that $C_2$ is Borel by construction. Hence it suffices to show that the set $\set{t(x)}{x \in X}$ has a Borel choice (or \emph{uniformisation}) function. To that end, let $P \subset (\rr^2)^2$ be the relation
\[
	(x,y) \in P \iff y \in t(x).
\]
Clearly, $P$ is a Borel subset of $(\rr^2)^2$ and, fortunately, every section $P_x$ has a canonical witness, which we identify as follows: fix $x$ and travel along $L$ starting at $\tilde{z}$ towards the point at infinity~$(\infty,\infty)$. Then the first $y \in L^{\tilde{z}}$ for which a sufficiently large closed ball centred at $y$ is contained in~$T$ does the trick (recalling that $T$ is closed shows that this is well-defined).

To formalise this, consider $L^{\tilde{z}}$ and define the function $\rho \colon \rr^2 \rightarrow \rr^2$ by
\begin{align*}
	\rho(x) = \min_{|y|}\Set{y \in L^{\tilde{z}}}{\overline{B}_{|x| + 1}(y) \subset T}
\end{align*}
where $\overline{B}_r(x)$ denotes the closed ball of radius $r$ and centre~$x$. It follows from the properties of~$T$ from \cref{Tprops} that $\rho(x)$ is well-defined and as desired. To see that $\rho$ is Borel, recall that a function is Borel if and only if the pre-image of every basic open set is Borel \cite[11.C]{MR1321597}. Fix a condition $q \subset \rr^2$; we show $\rho^{-1}[q]$ is Borel using \cref{prp:BorelDefn}. We may assume that $q \cap L^{\tilde{z}} \neq \emptyset$ since otherwise $\rho^{-1}[q] = \emptyset$. By definition of $\rho$ we know that $\tilde{z} \not\in \ran(\rho)$; hence~$q \cap L^{\tilde{z}}$ is homeomorphic to the open interval~$(0,1)$. Let
\[
	b = (\tilde{z} \oplus \tilde{q}) \oplus (\varphi \oplus \theta_1 \oplus \theta_2)
\]
where $\tilde{q}$ codes the limit points of $q \cap L^{\tilde{z}}$; we use this as an oracle to apply \cref{prp:BorelDefn}. Given $x \in \rr^2$, first compute $|x| + 1$. From the limit points $c_1,c_2 \in L^{\tilde{z}}$ of $q \cap L^{\tilde{z}}$ (which are coded into~$\tilde{q}$) compute the largest radii $r_1,r_2$ such that $\overline{B}_{r_1}(c_1) \subset T$ and $\overline{B}_{r_2}(c_2) \subset T$, which requires finitely-many jumps from $b$ (recall that $|c_1| < |c_2|$). By construction, $r_1 < r_2$, and if $r_1 < |x| + 1 < r_2$ then $x \in \rho^{-1}[q]$, which proves the result by \cref{prp:BorelDefn}.
\end{proof}

To complete the proof of \cref{thm:myThm1}, it suffices to show that $C_2$ is a Borel coboundary. For each $x \in \rr^2$ let
\[
	\tilde{x} = \rho(x) \in t(x)
	\]
which we call the \define{canonical witness for $x$}.

\begin{lem}\label{lem:last}
	Let $x,y \in \rr^2$. If $z \in t(x) \cap t(x+y)$ then $z + x \in t(y)$.
\end{lem}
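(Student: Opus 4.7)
The plan is to observe that this lemma is essentially a direct unfolding of the definition of $t(\cdot)$ from \cref{dfntxxx}, combined with commutativity/associativity in $\rr^2$. No geometric construction or generic argument is needed; the work done by the earlier lemmas (closure of $T$ under addition, existence of canonical witnesses, etc.) is not required here.

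Concretely, I would proceed as follows. Unpacking the hypothesis, $z \in t(x)$ gives two facts: $z \in T$ and $x + z \in T$. Similarly, $z \in t(x+y)$ gives $z \in T$ and $(x+y) + z \in T$. To conclude $z + x \in t(y)$, I need to verify the two defining clauses: first, that $z + x \in T$, and second, that $y + (z + x) \in T$. The first is immediate from $x + z \in T$ by commutativity. The second follows from $(x+y) + z \in T$ by rewriting $(x+y)+z = y + (x+z) = y + (z+x)$ using associativity and commutativity in $(\rr^2,+)$.

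There is no real obstacle: the only thing to be careful about is not inadvertently invoking \cref{Tprops}\ref{TAddition} (closure of $T$ under addition) in a circular way or appealing to properties of $\tilde{z}$, $L_1$, $L_2$. The lemma holds simply because $t(x)$ is defined by a symmetric pair of conditions that behave well under shifts by $x$. The utility of the lemma, of course, lies in the upcoming steps where it will presumably be used to rewrite expressions like $C_2(x+y,\widetilde{x+y})$ in terms of witnesses compatible with $t(x)$ and $t(y)$ simultaneously; that combinatorial use is where the real content will appear, not in the proof of the statement itself.
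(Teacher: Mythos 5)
Your proof is correct and follows exactly the same route as the paper: unfold the definition of $t(\cdot)$, read off $z+x \in T$ from $z \in t(x)$ and $z+x+y \in T$ from $z \in t(x+y)$, and conclude by commutativity/associativity in $(\rr^2,+)$. The paper's version is simply more terse.
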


\begin{proof}
	By definition, $z \in t(x)$ implies $z + x \in T$. Since $z \in t(x+y)$ we also know that $z + x + y \in T$. Hence $z + x \in t(y)$.
\end{proof}

Suppose $x,y \in \rr^2$ and choose $z \in t(x) \cap t(y) \cap t(x + y)$. Then
\begin{align*}
	C_2(x,y) &= C_2(x,z) + C_2(x+z,y) - C_2(x+y,z)\\
	&= C_2(x,\tilde{x}) + C_2(x+z,y) - C_2(x+y,\widetilde{x+y})\\
	&= J(x) + C_2(x+z,y) - J(x+y).
\end{align*}
Finally, notice that $z \in t(x) \cap t(x + y)$ implies $x + z \in t(y)$ by \cref{lem:last}. Hence
\[
	C_2(x+z,y) = C_2(y,x+z) = C_2(y,\tilde{y}) = J(y)
\]
and so $C_2(x,y) = C_J(x,y)$. Now,
\[
	C_J(x,y) = C_2(x,y) = C_1(x,y) + C_F(x,y) = C(x,y) + C_B(x,y) + C_F(x,y)
\]
and thus $C(x,y) = C_J(x,y) - C_B(x,y) - C_F(x,y)$, a sum of Borel coboundaries. This suffices by \cref{lastAlgPrp}. Hence we conclude:

\begin{thm}\label{thm:myThm1}
	For every group $G$ we have $\hbor\left(\rr^2,G\right) = 0$.
\end{thm}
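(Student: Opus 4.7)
The plan is to assemble the sequence of lemmas developed in \cref{sec:thm1} into a proof that an arbitrary Borel cocycle $C \colon (\rr^2)^2 \to G$ is a Borel coboundary. The whole strategy is a three-stage correction: starting from $C$, I would subtract off successively simpler Borel coboundaries $C_B$, $C_F$, and $C_J$, each chosen so that the residue has strictly stronger invariance properties, until the final residue vanishes identically.

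First I would fix, via \cref{lem:afterMe}, a condition $p = I\times J$ in the first quadrant and $g \in G$ such that $C$ is identically $g$ on $M$-generic pairs inside $p$. The combinatorial heart of the argument is then \cref{lem:allCases}, which shows that $C$ is invariant under equal-length equal-sum substitutions of generics in $p^*$; this is what makes the canonical pair $(\lambda,\mu)$ and the canonical group element $\tilde{g}$ of \cref{sec:trivStages} well-defined. I would then invoke \cref{lem:KR2} to conclude that $C_1(x,y) = C(x,y) - \tilde{g}$ is invariant under equal-sum substitutions of generics of \emph{arbitrary} (possibly unequal) length, subject only to $n \geq m$; and note that the difference $C - C_1 = C_B$ is a Borel coboundary for the constant function $B(x) = -\tilde{g}$.

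Next I would pass to the closed cone $T$ constructed in \cref{sec:trivClosedSet} via \cref{lem:allExist,dfn:T,Tprops}, which is closed under addition, has infinite-diameter interior, and (by \cref{sumGen}) every point of which decomposes as a sum of generics in $p^*$. Using \cref{lem:KR2}, the function $F$ defined as $C_1(x_1,\dots,x_n)$ on sums $\sum \bar x \in T$ of generics (and $0$ elsewhere) is well-defined and Borel, so $C_F$ is a Borel coboundary. By \cref{prevLem2,corRRRR}, the corrected cocycle $C_2 = C_1 + C_F$ then vanishes on $T \times T$.

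Finally, I would exploit this vanishing on $T$ to write $C_2$ itself as a coboundary. Using the canonical witness $\tilde x = \rho(x) \in t(x)$ from \cref{functionLDfn}, I would set $J(x) = C_2(x,\tilde x)$; \cref{lem:sump,functionLDfn} ensure that $J$ is well-defined and Borel, and the computation immediately preceding the theorem shows $C_2(x,y) = J(x) + J(y) - J(x+y) = C_J(x,y)$, via the key identity $x + z \in t(y)$ whenever $z \in t(x)\cap t(x+y)$ (\cref{lem:last}) and the $C_2$-invariance on $T$. Rearranging gives $C = C_J - C_B - C_F$, a finite sum of Borel coboundaries, which is itself a Borel coboundary by \cref{lastAlgPrp}. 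The main obstacle, already handled in the lemmas, was showing that the uniformisation $\rho$ is Borel and that $t(x)\cap t(x')\cap t(w)$ is rich enough for the identities to close up; with those in hand, the theorem is essentially a bookkeeping exercise.
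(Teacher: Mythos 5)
Your proposal faithfully reconstructs the paper's own argument: the same three-stage correction $C \mapsto C_1 = C - \tilde g \mapsto C_2 = C_1 + C_F \mapsto C_2 = C_J$, keyed off exactly the same sequence of lemmas (forcing a constant value on generics in a condition $p$, equal-sum invariance from \cref{lem:allCases} and \cref{lem:KR2}, the cone $T$ and the function $F$, vanishing of $C_2$ on $T\times T$, and finally the Borel uniformisation $\rho$ defining $J$ and the closure identity \cref{lem:last}). This matches the paper's proof; no gaps.
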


\section{The Case $N > 2$}\label{sec:final}

Let $N > 2$ and $G$ be a group. Suppose $C \colon \left(\rr^N\right)^2 \rightarrow G$ is an abelian Borel cocycle with Borel code $z$ and let $M$ be a countable transitive model of $\ZFC$ containing $z$ and $G$ as elements. As in \cref{subsec}, we assume there exist $N$-dimensional conditions $p = I_1 \times \ldots \times I_N \subset \rr^N$ and $q = J_1 \times \ldots \times J_N$ satisfying that each $I_i \subset (0,\infty)$ (so $p$ lies in the $N$-dimensional ``first'' quadrant, octant, etc.) and so that there exists $\tilde{g} \in G$ for which $C(x) = \tilde{g}$ for every $x \in (p \times q)^*$. Further, we may of course assume $D(p) < D(q)$ (cf.\ after \cref{lem:afterMe}). We also stipulate that $|I_i| \neq |I_j|$ for all~$i \neq j$, which is needed in \cref{geomLem54366} below.

We retrace the arguments from \cref{sec:thm1}. In order to prove the theorem along our arguments in the previous section, we need to prove higher dimensional versions of our topological lemmas. We begin with a high-dimensional version of \cref{lem:allCases}:

\begin{prp}
	If $(\bar{x})_n, (\bar{y})_n \in p^*$ and $\sum \bar{x} = \sum \bar{y}$, then $C(\bar{x}) = C(\bar{y})$.
\end{prp}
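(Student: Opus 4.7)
The approach mirrors the structure of \Cref{lem:allCases}, now carried out in $\rr^N$. The key structural feature I will exploit is that conditions remain axis-aligned $N$-dimensional boxes, so one may independently translate a point by a small amount along any single coordinate axis without leaving its enclosing box; this generalises the single ``vertical'' shift used in the $\rr^2$ proof. I proceed by induction on $n$.

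For the base case $n = 2$, I would run the argument in the first part of the proof of \Cref{lem:sum} unchanged: pick a sufficiently small $q < p$ and a generic $z \in q^{*[x_1,x_2,y_1,y_2]}$ with $z' = z + (y_1 - x_1) \in q$, which exists by openness of $p$ independently of the ambient dimension. The algebraic manipulations reducing $C(x_1,x_2,z,z') = C(y_1,y_2,z,z')$ to $C(x_1,x_2) = C(y_1,y_2)$ use only the cocycle identities from \Cref{sec:identities} and so go through verbatim.

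For the inductive step at $n = k+1$, the special case in which some $x_\ell = y_\ell$ reduces immediately to the inductive hypothesis via the cocycle expansion formula, exactly as in the $\rr^2$ argument. For the general case, I would first establish an $N$-dimensional analog of \Cref{lem:ngonLem}: fix any coordinate axis $e_j$; since the sum of the $e_j$-components of the edge vectors $(y_i - x_i)$ vanishes, some are positive and some negative, and one can redistribute $e_j$-displacement between two such edges to force one edge to be parallel to the coordinate hyperplane $\{x_j = 0\}$, while leaving at least one other edge untouched (possible because $k \geq 3$). The desired algebraic equality then follows by the special case applied twice. The main induction then proceeds by iteratively shrinking the $e_j$-component of the edge $(y_1 - x_1)$ by redistribution to edges pointing in the opposite $e_j$-direction, mirroring the recursive construction in the proof of \Cref{lem:allCases}: either Case (a) eventually produces $x_1^{(\ell+1)} = y_1^{(\ell+1)}$ (finishing via the special case), or Case (b) strictly decreases the $e_j$-extent of the critical edge, and termination follows as before.

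The main obstacle will be the three-vector step analogous to \Cref{usefulLemma} when no pair $(x_i, y_j)$ shares any coordinate. In $\rr^N$, the three edge vectors $(y_i - x_i)$ sum to zero and hence lie in a $2$-dimensional affine subspace, but the boxes $R_i$ bounding them are genuinely $N$-dimensional. I plan to select two coordinate axes with respect to which the edge vectors have non-trivial projections (such a pair exists because the vectors are non-degenerate and span the planar affine subspace), and choose the shift $\epsilon \in \rr^N$ so that its projection to these two axes plays the role of the $\rr^2$ shift from \Cref{usefulLemma}, while its remaining $N - 2$ components are zero. The conditions $x_1 + \epsilon, x_2 - \epsilon, y_2 + \delta \in p$ then reduce to the $2$-dimensional verification inside the corresponding $2$-dimensional projection of $p$; the remaining coordinates are automatically preserved. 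The technical core is verifying that one can choose $\epsilon$ simultaneously generic and small enough to satisfy the analog of the rectangle-intersection property, which I expect to follow from the same openness-plus-symmetry argument used in the $\rr^2$ case.
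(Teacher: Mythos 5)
Your overall structure closely mirrors the paper's: combinatorial base case $n=2$, the ``special case'' reduction when some $x_\ell = y_\ell$, an $N$-dimensional analogue of \Cref{lem:ngonLem} using single-axis redistributions, and the termination argument from \Cref{lem:allCases}. These parts are sound and match the paper.

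The gap is in your treatment of the $n=3$ step (the \Cref{usefulLemma} analogue). You propose to pick two coordinate axes $j_1,j_2$, set $\epsilon_j = 0$ for $j \notin \{j_1,j_2\}$, and claim ``the remaining coordinates are automatically preserved.'' That claim is false for the third required membership. Recall $\delta = (y_1 - x_1) - \epsilon$; hence for $j \notin \{j_1,j_2\}$,
\[
\bigl(y_2 + \delta\bigr)_j \;=\; (y_2)_j + (y_1)_j - (x_1)_j,
\]
and nothing forces this to lie in $I_j$: with $I_j = (0,1)$ and $(y_2)_j = 0.8$, $(y_1)_j = 0.7$, $(x_1)_j = 0.1$ you get $1.4 \notin I_j$. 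So restricting $\epsilon$ to two coordinate axes does not reduce the problem to a two-dimensional verification; the $(y_1-x_1)$-offset remains active in every coordinate you zeroed out. Note that the paper sidesteps this by carrying out the \Cref{usefulLemma} construction inside the $2$-plane spanned by the triangle $\Delta$ (so $\epsilon$ and hence $\delta$ lie in that plane's direction space), rather than by projecting to an axis pair. If you want to keep an axis-aligned shift, you would first need to apply the \Cref{lem:ngonLem}-style redistribution along $N-2$ axes to kill all but two coordinates of $(y_1 - x_1)$ --- after which $\delta$ really is supported on $\{j_1,j_2\}$ and your two-dimensional reduction becomes legitimate --- but as written that preprocessing step is missing.

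A smaller point of imprecision: after the \Cref{lem:ngonLem}-style step makes the critical edge parallel to $\{x_j = 0\}$, its $e_j$-component is already zero, so ``iteratively shrinking the $e_j$-component'' of that same edge is vacuous. What you mean is to shrink the remaining (now fewer) nonzero components, iterating over the other axes; the paper's remark that the arguments ``can be carried out identically'' implicitly means iterating the straightening/shrinking along each axis in turn until $x_1^{(\ell)} = y_1^{(\ell)}$. Worth stating explicitly, since for $N>2$ a single pass does not suffice.
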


\begin{proof}
	As before, the \emph{special case} is combinatorial and hence applies: if there exists $\ell \leq n$ for which $x_\ell = y_\ell$ then the lemma follows immediately. If $n \in \{ 2,3 \}$, then we are also done: if $n=2$, the combinatorial argument in \cite{MR1841758} applies; if $n=3$, then the vectors $(y_1 - x_1), (y_2 - x_2), (y_3 - x_3)$ bound a triangle~$\Delta$. Since $p$ is open, the same argument as in \cref{usefulLemma} holds, considering~$\Delta$ in its induced plane inside $p$. Hence, assume $n > 3$ and that there exists no pair $(i,j)$ for which $x_i = y_j$. Note that the arguments in \cref{lem:ngonLem} and the recursion in the proof of \cref{lem:allCases} do not depend on the number of dimensions in the ambient space; they can be carried out identically in $\rr^N$ for any~$N > 2$.
\end{proof}

Recall that the results in \cref{sec:trivStages} are purely combinatorial, hence apply immediately in the case $N > 2$. The remaining argument reduces to translating \cref{sec:trivClosedSet} to $N$. In fact, the only obstacle is to correctly define~$T$; all subsequent arguments are combinatorial.

\medskip

Recall the case $N=2$ and denote the set $T$ defined in \cref{sec:thm1} by $T(2)$, to highlight its topological dimension. We defined two auxiliary lines $L_1',L_2'$ which induce $T(2)$: it is the space bounded by $L_1',L_2'$ in the direction of the limit point $(\infty,\infty)$. Note that $T(2)$ is homeomorphic to the closed upper half plane, which we denote by
\begin{align*}
	U^2 &= \set{(x_1,x_2)}{x_2 \geq 0}.\\
\intertext{For each $N > 2$, we define a set $T(N)$ homeomorphic to~$U^N$, the~$N$-dimensional closed upper half plane given by}
	U^N &= \set{(x_1,\ldots,x_N)}{x_N \geq 0}.
\end{align*}
With the condition~$p \subset \rr^N$ fixed as before, it is easily seen that there exists~$M < \omega$ for which $(M \cdot p) \cap ((M + 1) \cdot p) \neq \emptyset$ (cf.\ \cref{lem:allExist}). Indeed, consider $p$ as an $N$-dimensional hypercube in $\rr^N$. By assumption, all $2^N$-many vertices of $p$ are positive. The existence of $M < \omega$ is derived exactly as in the two-dimensional case: the shortest side of $p$ gives a sufficient bound for $M$ just as in the proof of \cref{lem:allExist}. Hence, fix such an $M$, and consider the induced set~$p^+$.

From now on, we write $X \homeo Y$ to mean that $X$ and $Y$ are homeomorphic. The set of straight lines containing the origin $\origin$ is given by $\olines$.

\begin{prp}\label{geomLem54366}
	There exists a subspace $T(N) \subset p^+ \subset \rr^N$, homeomorphic to $U^N$, which shares the properties of $T(2)$ on $\rr^2$.
\end{prp}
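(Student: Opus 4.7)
My plan is to define $T(N)$ as an $N$-dimensional truncated convex polyhedral cone anchored inside the positive orthant, mirroring the polar-coordinate description $T = \set{(r,\theta) \in \rr^2}{r \geq s \land \theta_2 \leq \theta \leq \theta_1}$ from \cref{dfn:T}. Writing $p = I_1 \times \cdots \times I_N$ with $I_i = (a_i, b_i)$ and $0 < a_i < b_i$, the point $td$ (for $d \in \rr^N_{>0}$, $t > 0$) lies in $p^+ = \bigcup_{k > M} k \cdot p$ precisely when some integer $k > M$ satisfies $t d_i / b_i < k < t d_i / a_i$ for every $i$, i.e., when the interval $(\max_i t d_i / b_i,\, \min_j t d_j / a_j)$ contains an integer greater than $M$. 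This motivates the open convex cone of admissible directions
\[
	K = \Set{d \in \rr^N_{>0}}{d_i / b_i < d_j / a_j \text{ for every } i, j \leq N},
\]
which is non-empty (the centre $c = \tfrac{1}{2}(a_1 + b_1, \ldots, a_N + b_N)$ of $p$ satisfies $c_i / b_i < 1 < c_j / a_j$) and convex as a cone with apex $\origin$.

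Next, I would pick a compact angular sub-cone $\overline{K_\delta} \subset K$ still containing $c$ in its interior and with $\min_j(d_j/a_j) - \max_i(d_i/b_i) \geq \delta > 0$ uniformly in unit vectors $d \in \overline{K_\delta}$. Choosing $s > 0$ large enough that $s \delta > 1$ (and enlarging $s$ further if needed to push the chosen $k$ above $M$) guarantees that for every $t \geq s$ and every $d \in \overline{K_\delta}$ the admissible interval has length exceeding $1$ and thus contains an integer $k > M$. I would then set
\[
	T(N) = \Set{td}{d \in \overline{K_\delta},\; |d| = 1,\; t \geq s},
\]
so that $T(N) \subset p^+$ and $T(N)$ is closed in $\rr^N$ by construction.

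The four properties of \cref{Tprops} are then verified as follows. Non-emptiness and closedness are immediate. The half-line $L^{\tilde{z}}$ through $\origin$ along $c$, starting at $\tilde{z} = sc / |c|$, lies in $T(N)$, giving the analogue of property~\ref{TLz}. Closure under addition separates into directional and radial parts: for $x, y \in T(N)$, convexity of $\overline{K_\delta}$ as a cone with apex $\origin$ gives $x + y \in \overline{K_\delta}$, while $T(N) \subset \rr^N_{>0}$ forces $x \cdot y \geq 0$ and hence $|x + y|^2 \geq |x|^2 + |y|^2 \geq 2s^2 > s^2$. Infinite interior diameter follows because $K_\delta$ is a full-dimensional open cone, so the radial cross-section of $T(N)$ at scale $t$ grows linearly in $t$ and absorbs arbitrarily large balls. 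Finally, spherical coordinates about $\origin$ exhibit $T(N)$ as a product of a closed $(N-1)$-disc in $S^{N-1}$ and $[s, \infty)$, which is homeomorphic to $D^{N-1} \times [0, \infty) \homeo U^N$.

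The one genuinely non-trivial step is the isolation of the admissible cone $K$: the inequality $d_i / b_i < d_j / a_j$ is the precise algebraic translation of the statement ``a sufficiently distant ray along $d$ always meets some dilate $k \cdot p$'', and once this characterisation is in hand, all downstream geometric arguments of \cref{sec:thm1} (the replacements for \cref{sumGen,corRRRR}, \cref{functionLDfn}, and the canonical witness construction) transfer to $\rr^N$ verbatim, because every appeal to $T(2)$ in Section~3 is through the four abstract properties listed in \cref{Tprops}.
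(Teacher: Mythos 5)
Your proposal is correct, and it takes a genuinely different (arguably cleaner) route from the paper's proof. The paper works with the overlap hypercubes $R_k = (k\cdot p)\cap((k+1)\cdot p)$, shows their extreme vertices lie on a common line $L \in \olines$ (\cref{lem77769}, \cref{lemPpp}), inscribes a sequence of balls $(B_k)_{k\geq M}$ with centres on $L$ tangent to a fixed pencil of lines through $\origin$, and lets $\mathcal{H}^+$ be the positive filled hypercone enveloping them; it then argues $\mathcal{H}^+ \homeo U^N$ via convexity and a stereographic-projection remark, and sets $T(N) = \mathcal{H}^+ + \tilde{z}$. You bypass the ball construction entirely by translating the membership condition $td \in k\cdot p$ into the coordinate inequalities $td_i/b_i < k < td_i/a_i$, isolating the open convex polyhedral cone $K$ of admissible directions, passing to a compact angular sub-cone $\overline{K_\delta}$ with a uniform gap $\delta$, and truncating at a large radius $s$. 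Two advantages of your version: the containment $T(N)\subset p^+$ becomes a transparent pigeonhole-on-interval-lengths argument rather than a geometric envelope claim; and your $T(N)$ is a truncated angular sector, which directly generalises the paper's own $T(2)$ of \cref{dfn:T}, whereas the paper's $T(N)$ for $N>2$ is a translated cone with apex at $\tilde{z}$ and so has a different shape from its own two-dimensional prototype. You also do not invoke the hypothesis $|I_i|\neq|I_j|$ that the paper adds specifically for this proposition. Two small points worth stating explicitly: you should also take $s$ large enough that $T(N)\cap p=\emptyset$ (used downstream in \cref{prevLem2}), which is automatic once $s$ exceeds $\sup_{x\in p}|x|$; and the final identification of $(\overline{K_\delta}\cap S^{N-1})\times[s,\infty) \homeo D^{N-1}\times[0,\infty)$ with $U^N$, while true, is asserted rather than proved---it requires straightening the corner stratum---though the paper's stereographic-projection claim is equally terse on this point.
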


Before we give the proof, we require a technical lemma. Let~$R_M$ denote the $N$-hypercube given by the intersection $(M \cdot p) \cap ((M + 1) \cdot p)$, and define~$R_k$ for $k \geq M$ similarly. We assume here that~$R_M$ is homeomorphic to~$D^N$, the closed disc of $N$ dimensions.
To define~$T(N)$, consider auxiliary lines~$L'_1,L'_2 \in \olines$ which bound each~$R_k$ for $k \geq M$; we make this precise below. The induced subspace will serve as our space~$T(N)$.

To define the relevant notions, we need the following lemma first. Denote the set of vertices of~$R_M$ by $V(R_M)$, and define $V(k \cdot p)$ similarly.

\begin{lem}\label{lem77769}
Let $k \geq M$. Then $V(R_k) \cap V(k \cdot p)$ and $V(R_k) \cap V((k+1) \cdot p)$ are distinct singletons.
\end{lem}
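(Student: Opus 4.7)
The plan is to prove this by a direct coordinate computation, using that $p$ is by hypothesis an axis-aligned $N$-box sitting in the positive orthant. I would begin by writing $p = I_1 \times \ldots \times I_N$ with $I_i = (a_i,b_i)$ and $0 < a_i < b_i$, so that $k \cdot p = \prod_i (ka_i, kb_i)$ and $(k+1) \cdot p = \prod_i ((k+1)a_i, (k+1)b_i)$. Taking the intersection coordinate-wise, one sees
\[
	R_k = (k \cdot p) \cap ((k+1) \cdot p) = \prod_{i=1}^N ((k+1)a_i,\; kb_i),
\]
which is non-degenerate (indeed an $N$-box itself, hence homeomorphic to $D^N$) exactly when $(k+1)a_i < kb_i$ for all~$i$, i.e.\ when $k > a_i/(b_i-a_i)$. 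This is precisely the inequality that defines the $M$ fixed earlier, and so the description of $V(R_k)$ as the $2^N$ points with $i$-th coordinate in $\{(k+1)a_i,\,kb_i\}$ is valid for every $k \geq M$.

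Next I would compare this vertex set to $V(k \cdot p)$, whose elements are those with $i$-th coordinate in $\{ka_i, kb_i\}$. A point lies in $V(R_k) \cap V(k \cdot p)$ iff, for each $i$, its $i$-th coordinate lies in
\[
	\{(k+1)a_i,\; kb_i\} \cap \{ka_i,\; kb_i\}.
\]
Since $a_i > 0$ forces $(k+1)a_i \neq ka_i$, and the non-degeneracy of $R_k$ gives $(k+1)a_i < kb_i$ (so $(k+1)a_i \neq kb_i$), this intersection is the singleton $\{kb_i\}$. Hence $V(R_k) \cap V(k \cdot p) = \{(kb_1,\ldots,kb_N)\}$, the "far" vertex of $k \cdot p$. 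By the identical computation, using $\{(k+1)a_i, kb_i\} \cap \{(k+1)a_i, (k+1)b_i\} = \{(k+1)a_i\}$ (again by non-degeneracy and $b_i > a_i$), we get $V(R_k) \cap V((k+1) \cdot p) = \{((k+1)a_1,\ldots,(k+1)a_N)\}$, the "near" vertex of $(k+1) \cdot p$.

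Finally, to see that the two singletons are distinct it suffices to compare any one coordinate: $kb_i \neq (k+1)a_i$ by the strict non-degeneracy $kb_i > (k+1)a_i$ noted above. In fact every coordinate distinguishes them, so the two vertices sit at opposite corners of $R_k$ along its principal diagonal. I do not expect any real obstacle here; the whole argument is a careful but routine unpacking of the product structure. The only subtlety worth flagging is the hypothesis that $|I_i| \neq |I_j|$ for $i \neq j$, which is not needed for this particular singleton claim but is invoked later in the construction of~$T(N)$; the proof above uses only $0 < a_i < b_i$ together with the choice of $M$.
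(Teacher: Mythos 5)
Your proof is correct and follows essentially the same route as the paper's: a coordinate-wise identification of the unique vertex of $k\cdot p$ (resp.\ $(k+1)\cdot p$) that lands inside the overlap $R_k$. Where the paper argues via an informal normalization (``express $V(M\cdot p)$ as $\{0,2\}$-words and $V((M+1)\cdot p)$ as $\{1,3\}$-words''), you work directly with the product decomposition $R_k = \prod_i\bigl((k+1)a_i,\, kb_i\bigr)$; this is slightly cleaner, since the side lengths $|I_i|$ are in fact pairwise distinct and so the paper's uniform rescaling is not literally available, and your observation that $|I_i|\neq|I_j|$ is not needed for this lemma is also accurate.
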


\begin{proof}
	By shifting and shrinking, w.l.o.g., express the set $V(M \cdot p)$ as the set of $\{ 0,2 \}$-words of length~$2^N$, and similarly, $V((M+1) \cdot p)$ as the set of $\{ 1,3 \}$-words of length $2^N$. Now, a vertex $v \in V(M \cdot p)$ is an element of~$(M + 1) \cdot p$ if and only if each of its coordinates lies in the interval~$[1,3]$. Clearly, only the vertex~$(2,2,\ldots,2)$ satisfies this. By an identical argument, the only vertex $v \in V((M + 1) \cdot p)$ in~$M \cdot p$ is~$(1,1,\ldots,1)$.
\end{proof}

For a subset $A \subset \rr^N$, let its interior be denoted by $A^{\circ}$.

\begin{lem}\label{lemPpp}
	There exists a line $L' \in \olines$ such that, for every $k \geq M$ we have $R^{\circ}_k \cap L' \neq \emptyset$.
\end{lem}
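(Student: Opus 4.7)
The plan is to exhibit $L'$ explicitly: pick any point $x$ in the interior of the ``smallest'' slab $R_M$, take $L'$ to be the line through the origin and $x$, and then verify via a scaling argument that $L'$ enters the interior of every subsequent $R_k$.

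Concretely, write $p = I_1 \times \cdots \times I_N$ with $I_i = (a_i, b_i)$ and $0 < a_i < b_i$ for each $i$. Then $k \cdot p = \prod_i (ka_i, kb_i)$, and a direct computation shows
\[
R_k^{\circ} \;=\; \prod_{i=1}^{N}\bigl((k+1)a_i,\, k b_i\bigr),
\]
which is non-empty for all $k \geq M$ by the choice of $M$. Since $R_M$ is assumed homeomorphic to $D^N$, the interior $R_M^{\circ}$ is non-empty, so I can fix any $x \in R_M^{\circ}$. Every coordinate satisfies $x_i > (M+1)a_i > 0$, so $x \neq \origin$, and the line $L' = \{\, tx : t \in \rr \,\} \in \olines$ is well-defined.

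To show $L' \cap R_k^{\circ} \neq \emptyset$ for each $k \geq M$, I would check that the scaled point $y := (k/M)\,x \in L'$ lies in $R_k^{\circ}$ by verifying both coordinate inequalities from the product formula above. The upper bound $y_i < kb_i$ reduces to $x_i < M b_i$, which is automatic from $x \in M \cdot p$. The lower bound $y_i > (k+1)a_i$ rearranges to $x_i > M(k+1)a_i/k$, and since $M(k+1)/k = M + M/k \leq M+1$ for every $k \geq M$, it follows from $x \in R_M^{\circ}$.

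The only delicate point occurs at the boundary case $k = M$, where the scaling factor is trivial ($y = x$) and the estimate $M(k+1)/k = M+1$ is sharp; here the strict inclusion $x \in R_M^{\circ}$ (rather than merely $x \in R_M$) is precisely what supplies the strict inequality $x_i > (M+1)a_i$ needed. There is no real obstacle beyond careful bookkeeping of the inequalities, as the geometric content, that any ray through $\origin$ entering $R_M^{\circ}$ transversally continues to pierce every larger slab $R_k$, is captured cleanly by the scaling $x \mapsto (k/M)x$.
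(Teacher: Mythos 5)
Your proof is correct, and it takes a route that is close in spirit to the paper's but more elementary and a bit more general. The paper goes through \cref{lem77769}: it singles out the vertex $v_k \in V(R_k) \cap V((k+1)\cdot p)$, notes that $v_k = (k+1)(a_1,\ldots,a_N)$, observes that these vertices are collinear through $\origin$, and takes $L'$ to be that line; it then leaves implicit the final step that a line through the ``near'' corner of a non-degenerate box necessarily passes through its interior. You instead dispense with the vertex combinatorics entirely: you compute $R_k^{\circ} = \prod_i ((k+1)a_i, kb_i)$ directly, pick an arbitrary $x \in R_M^{\circ}$, and verify by the scaling $y = (k/M)x$ and the inequality $M(k+1)/k \leq M+1$ (valid precisely for $k \geq M$) that $L'$ enters every $R_k^{\circ}$. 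This is both more self-contained (no appeal to \cref{lem77769}) and strictly more informative, since it shows that \emph{every} line through $\origin$ meeting $R_M^{\circ}$ works, not just the distinguished one through the shared vertex; it is also more careful than the paper's version in explicitly landing in the interior rather than on the boundary. The paper's approach buys a concrete canonical choice of $L'$ tied to the vertex structure, which it uses in the surrounding construction of the hypercone, but for the statement of the lemma itself your argument is cleaner.
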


\begin{proof}
	Let $v_{k} \in \rr^N$ be as provided by \cref{lem77769}, where
	\[
		v_{k} \in V(R_k) \cap V((k + 1) \cdot p).
	\]
	By definition of $k \cdot p$, there exists $\spec{v} \in V(p)$ for which $v_{k} = (k+1)\spec{v}$. Note that $v_{k}$ is the vertex of $V((k + 1) \cdot p)$ closest to $\origin$. Since the set $\set{v_i}{i < \omega}$ lies on a straight line containing $\origin$, defining~$L' \in \olines$ to be the unique line containing $v_{1}$ suffices.
\end{proof}

As provided by the previous lemma, fix the straight line $L$ that passes through every $R_k$. Let~$B_M$ be the $N$-dimensional open ball contained in $R^\circ_M$ with centre on $L$ and of maximal radius so that $\del{B_M} \cap \del{R_M} \neq \emptyset$. Define a sequence $(B_k)_{k \geq M}$ such that:
\begin{enumerate}
	\item $\del{B_M} \cap \del{R_M} \neq \emptyset$
	\item if $L' \in \olines$ and $L'$ is tangent to $B_M$ then $L'$ is tangent to $B_k$ for every $k \geq M$.
\end{enumerate}
Since all sides of $R_k$ grow by a constant as $k$ increases, a straightforward geometrical argument shows that the latter condition can be satisfied. This also shows that
\[
	\lim_{k \rightarrow \infty} \diam(B_k) =\infty.
\]
The following corollary is now immediate.

\begin{cor}\label{corPpp}
	The sequence $(B_k)_{k \geq M}$ induces an $N$-dimensional hypercone~$\mathcal{H}$. In particular,~$\mathcal{H}$ has infinite $N$-dimensional diameter.
\end{cor}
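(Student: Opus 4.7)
The plan is to define $\mathcal{H}$ explicitly as the closed infinite right hypercone with apex $\origin$, axis $L$, and half-angle $\alpha$, where $\alpha$ is the common angle made with $L$ by any line $L' \in \olines$ tangent to $B_M$. Condition (2) in the construction of $(B_k)_{k \geq M}$ forces any line through $\origin$ tangent to $B_M$ to be tangent to each $B_k$, and elementary spherical geometry then forces the ratio $r_k/|c_k|$ (where $r_k$ is the radius of $B_k$ and $c_k$ its centre on $L$) to equal the constant $\sin \alpha$ for every $k \geq M$. Consequently each $B_k$ is inscribed in the same cone about $L$, so the sequence $(B_k)_{k \geq M}$ naturally singles out $\mathcal{H}$ as the smallest hypercone with apex $\origin$ containing every $B_k$.

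To formalise, let $u$ be a unit vector along $L$ pointing into $p^+$, and define
\[
    \mathcal{H} = \{\origin\} \cup \set{x \in \rr^N \setminus \{\origin\}}{\angle(x,u) \leq \alpha},
\]
where $\angle(x,u)$ is the usual Euclidean angle between $x$ and $u$. This is manifestly a closed right hypercone: it is closed under positive scalar multiplication, and its boundary is the conical surface whose rays from $\origin$ meet $u$ at angle exactly $\alpha$. The inclusion $\bigcup_{k \geq M} B_k \subset \mathcal{H}$ is then immediate from the tangency computation above, so the sequence $(B_k)_{k \geq M}$ does indeed induce the $N$-dimensional hypercone $\mathcal{H}$.

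For the infinite $N$-dimensional diameter, I would invoke the observation made in the construction that $\diam(B_k) \to \infty$ as $k \to \infty$. Given $K < \omega$, pick $k \geq M$ with $\diam(B_k) > 2K$; then the open ball of radius $K$ centred at the centre $c_k$ of $B_k$ satisfies
\[
    B_K(c_k) \subset B_k \subset \mathcal{H},
\]
which is the required witness. I do not anticipate a serious obstacle: the only piece of care required is to pin down what \emph{$N$-dimensional hypercone} means, which the explicit formula for $\mathcal{H}$ above supplies, and to verify that the tangency condition (2) really does pin $r_k/|c_k|$ to a single value of $\sin \alpha$ -- a short calculation with similar right triangles emanating from $\origin$.
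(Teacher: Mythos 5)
Your proof is correct and fills in precisely the gap that the paper leaves implicit: the paper states this corollary is ``now immediate'' after the construction of $(B_k)_{k\geq M}$, and your explication --- defining $\mathcal{H}$ via the tangent-line cone of $B_M$ and reading off infinite diameter from $\diam(B_k)\to\infty$ --- is the intended argument. The only cosmetic difference is that the paper's $\mathcal{H}$ is implicitly the full double cone (all lines through $\origin$ meeting $\overline{B}_M$), while you define the one-sided cone in the direction of $u$; this is immaterial since the paper immediately passes to the positive part $\mathcal{H}^+$ anyway.
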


Since all sides of $(k+1) \cdot p$ are larger than their respective sides of $B_k$, we see that, once~$\mathcal{H}$ enters~$p^+$, it does not escape: the radius of~$B_k$ is bounded above by the shortest side length of~$R_k$, which is bounded above by the shortest side length of~$k \cdot p$. By construction, all bounds are strict.

\begin{proof}[Proof of \cref{geomLem54366}]
With $p \subset \rr^N$ fixed, consider the hypercone $\mathcal{H}$ (induced by the $N$-sphere~$B_M$) as given by lemmas \ref{lemPpp} and \ref{corPpp}. Isolate its \emph{positive filled part}, defined by
	\begin{align*}
		x = (x_1,\ldots,&x_N)  \in \mathcal{H}^+ \iff\\
		&(\exists L' \in \olines)(x \in L' \land (\forall m \leq N)(x_m \geq 0) \land L' \cap \del{B_M} \neq \emptyset).
	\end{align*}
	It is easily seen that
	\[
		\del{\mathcal{H}^+} = \bigcup \set{L \in \olines}{|L \cap \del{B_M}| = 1}
	\]
	and hence $\del{\mathcal{H}^+} \homeo \rr^{N-1}$. And since $\mathcal{H}^+$ is clearly convex, it has trivial homology for every $k > 0$ \cite[10.8]{MR2151660}. By taking the stereographic projection we hence see that $\mathcal{H} \homeo U^N$.
	To define $T(N)$, fix some $\tilde{z} \in L \cap p^+$ (as before, we may assume $\tilde{z} \not\in p$) and take the shift $\mathcal{H}^+ + \tilde{z}$ defined by
\[
	\mathcal{H}^+ + \tilde{z} = \set{x + \tilde{z}}{ x\in \mathcal{H}^+}.	
\]
Since $\mathcal{H}^+ + \tilde{z} \subset p^+$, we define~$T(N) = \mathcal{H}^+ + \tilde{z}$. The important properties of $T(2)$ from lemma \ref{Tprops} carry over to $T(N)$ since $\mathcal{H}^+$ is closed, has infinite $N$-dimensional diameter and $T(N) \homeo U^N$.
\end{proof}

To complete the proof, note that the family~$\Set{t(x)}{x \in \rr^N}$ can be defined as in \cref{dfntxxx}, and shares the properties of \cref{lem:sump}. The map~$J$ (as defined on $\rr^N$) is Borel by the same argument as in \cref{functionLDfn}. Since all remaining lemmas in \cref{sec:trivClosedSet} are algebraic, we conclude:

\begin{thm}\label{thm:myThm2}
	$\hbor\left(\rr^N,G\right) = 0$ for every $N < \omega$ and every group $G$.
\end{thm}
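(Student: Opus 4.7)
The plan is to verify that the three-part structure of the Section~\ref{sec:thm1} proof---algebraic reduction of $C$ to $C_1$, construction of a well-behaved closed set and definition of $C_2$, and finally realization of $C_2$ as $C_J$ for a Borel function $J$---goes through verbatim in $\rr^N$ once the $N$-dimensional topological ingredients are in place. Since the higher-dimensional sum-invariance and the hypercone construction of $T(N) \subset p^+$ have already been established above, the main task is just to check that each subsequent step transfers.

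First I would observe that the content of \cref{sec:trivStages} is dimension-free: it uses only that $p$ is a nonempty open subset of an abelian topological group (for the existence of the canonical pair via \cref{lem45}) together with the purely algebraic identities of \cref{sec:identities}. So passing from $C$ to $C_1(x,y) = C(x,y) - \tilde{g}$ and verifying the strengthened invariance of \cref{lem:KR2} requires no new argument.

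Next I would repeat the definitions of $F$, $C_2$, and the families $t(x) = \{\,y \in T(N) : x+y \in T(N)\,\}$ in $\rr^N$. The four key properties of $T(N)$ listed in \cref{Tprops}---closedness, containment of an unbounded ray $L^{\tilde{z}}$, closure under addition, and infinite interior diameter---all hold by \cref{geomLem54366}: closure under addition follows from the cone structure of $\mathcal{H}^+$, and infinite diameter from \cref{corPpp}. The analogues of \cref{lem:sump} then follow as in the two-dimensional case, since we can shift $T(N)$ to any point and still find a large open ball in the resulting intersection. This yields that $C_2$ vanishes on $T(N) \times T(N)$ and hence that $J(x) = C_2(x,\tilde{x})$ is well-defined and independent of the choice of $\tilde{x} \in t(x)$.

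Finally, to produce a Borel canonical witness I would define $\rho(x)$ to be the first point on $L^{\tilde{z}}$ (traveling away from $\tilde{z}$) whose closed ball of radius $|x|+1$ lies in $T(N)$; Borel-ness follows from \cref{prp:BorelDefn} using an oracle coding the geometry of $T(N)$, exactly as in \cref{functionLDfn}. Once $J$ is shown Borel, the telescoping identity of the final paragraph of \cref{sec:thm1} yields $C_2 = C_J$, hence $C = C_J - C_B - C_F$ is a sum of Borel coboundaries, and \cref{lastAlgPrp} completes the proof. The main obstacle is genuinely the construction of $T(N)$, which has been resolved above; after that, the rest is a translation of the $N=2$ argument with $T(2)$ replaced by $T(N)$ throughout.
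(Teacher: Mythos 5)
Your proposal follows the paper's own argument essentially verbatim: it reuses the $N$-dimensional sum-invariance proposition and the hypercone construction of $T(N)$ from \cref{sec:final}, observes that the algebraic machinery of \cref{sec:trivStages} is dimension-independent, and then transfers the definitions of $F$, $C_2$, $t(x)$, $\rho$, and $J$ together with the final telescoping identity from \cref{sec:thm1}. This is precisely how the paper concludes \cref{thm:myThm2}, so the approach is correct and matches the intended proof.
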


\begin{proof}
	The case $N=1$ is \cite[Theorem 49]{MR1841758}; $N=2$ is \cref{thm:myThm1}. The remaining cases were covered in \cref{sec:final}. Hence the theorem is proved.
\end{proof}

\section{Open Questions}\label{sec:open}

In the present work we have focussed on the special cases of Borel definable group extensions of~$\rr^N$ by a \emph{countable} group $G$. If $G$ is uncountable Borel (recall \cref{dfn:borelDfn}: $G$ is a Borel subset of some standard Borel space, and the group operation is a Borel function), the forcing arguments in \cref{sec:thm1} do not apply as Cohen forcing has the c.c.c.\ \cite{MR895139}.

This obstruction is fundamental \cite[p.\ 265]{MR1841758}: based on an idea of Greg Hjorth, one can construct a
Borel subgroup $G \leq \rr$ and a Borel cocycle~$C$ such that $C$ is a Borel coboundary in $\hbor(\rr,\rr)$
but not in $\hbor(\rr,G)$---by \cref{thm:KRreals} such a $G$ is necessarily uncountable---which shows that \cref{thm:KRreals} fails in general for uncountable $G$. (It
should be noted that a similar construction works for Cantor space~$\cant$ \cite{MR1807391,MR1896675}.) This leaves the following question open: 

\begin{question}
	What can be said about the structure of $\hbor\left(\rr^N,G\right)$ if $G$ is Borel and uncountable and $N \geq 2$?
\end{question}

Contrasting the relationship of \cref{thm:KRreals} with our theorems \ref{thm:myThm1} and \ref{thm:myThm2}, one can consider the impact of the properties of $G$ on $\hbor\left(\rr^N,G\right)$ for varying $N$ and ask:

\begin{question}\label{q66}
	For which (group-theoretic) properties $P$ is the statement
\begin{center}
	$\hbor(\rr,G) = \hbor(\rr^N,G)$ whenever $G$ has property $P$
\end{center}
true for all $N < \omega$? For which properties $P$ do both $\hbor(\rr,G)$ and $\hbor(\rr^N,G)$ vanish for all $N < \omega$ whenever $G$ has property $P$?
\end{question}

For instance, in this paper we have shown that the property of ``being countable'' is sufficient to answer the second part of \cref{q66} affirmatively.

\medskip

While Hjorth and Kanovei-Reeken have proven that $\hbor(\cant,\cant)$ and $\hbor(\cant,G)$ are non-trivial---the latter even for finite $G$ \cite[p.\ 265]{MR1841758}---the following questions remain open to the author's knowledge.

\begin{question}
	What can be said about the structure of $\hbor(\baire,G)$ for countable (or Borel uncountable)~$G$? What about~$\hbor(\baire,\baire)$?
\end{question}
 
Further, one can consider the study of definable group extensions of groups with more ``homological'' structure and solve the associated classification problem in the context of Borel equivalence relations, orbits, and group actions (see \cite{MR1425877,MR2455198} for background). Particular cases in this context have been studied in-depth, and various classification results have been obtained (e.g.\ the preprint \cite{martino2022} for certain countable groups; in spirit, these are related to \cref{q66}).

However, for other spaces answers are scarce. Take for instance the $p$-adic integers $\mathbb{Z}_p$ for $p$ prime. This ring can be expressed as the inverse limit of the sequence $(\zz/p, \zz/p^2, \ldots)$, and is hence Polish \cite{MR4175370}. While progress has been made on related definability problems in terms of homological algebra and Borel equivalence relations (e.g.\ \cite{MRbf,MR4794437}), one can wonder about the effect of the inverse limit construction of $\zz_p$ on trivialising Borel cocycles, and whether these can be turned into Borel coboundaries in a step-by-step argument along the limit. Further, $\zz_p$ also carries algebraic structure (it is a valuation ring, a local ring, etc.), which could be investigated in this context. So, we ask:

\begin{question}
	What can be said about the structure of $\hbor(\mathbb{Z}_p,G)$ for $G$ Borel and countable, or otherwise?
\end{question}

\section{Acknowledgements}
The author's research was funded by the Singapore Ministry of Education through its research grant with number MOE-000538-01.

\bibliographystyle{abbrv}
\bibliography{MASTERBIB.bib}

\begin{thebibliography}{10}

\bibitem{MR1425877}
H.~Becker and A.~S. Kechris.
\newblock {\em The descriptive set theory of {P}olish group actions}, volume
  232 of {\em London Mathematical Society Lecture Note Series}.
\newblock Cambridge University Press, Cambridge, 1996.

\bibitem{MRbf}
J.~Bergfalk, M.~Lupini, and A.~Panagiotopoulos.
\newblock The definable content of homological invariants {I}: \textsf{Ext} and
  $\textsf{lim}^1$.
\newblock {\em Proceedings of the London Mathematical Society}, 129(3):e12631,
  2024.

\bibitem{MR4794437}
J.~Bergfalk, M.~Lupini, and A.~Panagiotopoulos.
\newblock The definable content of homological invariants {II}: {\v{C}}ech
  cohomology and homotopy classification.
\newblock {\em Forum Math. Pi}, 12:Paper No. e12, 2024.

\bibitem{Brattka2021}
V.~Brattka, G.~Gherardi, and A.~Pauly.
\newblock {\em Weihrauch Complexity in Computable Analysis}, pages 367--417.
\newblock Springer International Publishing, Cham, 2021.

\bibitem{MR3381097}
C.~T. Chong and L.~Yu.
\newblock {\em Recursion theory}, volume~8 of {\em De Gruyter Series in Logic
  and its Applications}.
\newblock De Gruyter, Berlin, 2015.
\newblock Computational aspects of definability, With an interview with Gerald
  E. Sacks.

\bibitem{MR1807391}
J.~R.~P. Christensen, V.~Kanovei, and M.~Reeken.
\newblock On {B}orel orderable groups.
\newblock {\em Topology Appl.}, 109(3):285--299, 2001.

\bibitem{MR2151660}
M.~D. Crossley.
\newblock {\em Essential topology}.
\newblock Springer Undergraduate Mathematics Series. Springer-Verlag London,
  Ltd., London, 2005.

\bibitem{dansPaper}
A.~Day, N.~Greenberg, M.~Harrison-Trainor, and D.~Turetsky.
\newblock {Iterated Priority Arguments in Descriptive Set Theory}.
\newblock {\em Bull. Symb. Log.}, 30(2):199--226, 2024.

\bibitem{MR2732288}
R.~G. Downey and D.~R. Hirschfeldt.
\newblock {\em Algorithmic randomness and complexity}.
\newblock Theory and Applications of Computability. Springer, New York, 2010.

\bibitem{MR233919}
A.~M. DuPre, III.
\newblock Real {B}orel cohomology of locally compact groups.
\newblock {\em Trans. Amer. Math. Soc.}, 134:239--260, 1968.

\bibitem{MR4472209}
D.~D. Dzhafarov and C.~Mummert.
\newblock {\em Reverse mathematics---problems, reductions, and proofs}.
\newblock Theory and Applications of Computability. Springer, Cham, \copyright
  2022.

\bibitem{MR3467030}
L.~Fuchs.
\newblock {\em Abelian groups}.
\newblock Springer Monographs in Mathematics. Springer, Cham, 2015.

\bibitem{MR2455198}
S.~Gao.
\newblock {\em Invariant descriptive set theory}, volume 293 of {\em Pure and
  Applied Mathematics (Boca Raton)}.
\newblock CRC Press, Boca Raton, FL, 2009.

\bibitem{MR4175370}
F.~Q. Gouv\^ea.
\newblock {\em {$p$}-adic numbers}.
\newblock Universitext. Springer, Cham, third edition, \copyright 2020.
\newblock An introduction.

\bibitem{hitchcock03}
J.~M. Hitchcock.
\newblock Effective fractal dimension: foundations and applications.
\newblock {\em PhD thesis, \normalfont{Iowa State University, USA}}, 2003.

\bibitem{MR4076}
D.~H. Hyers.
\newblock On the stability of the linear functional equation.
\newblock {\em Proc. Nat. Acad. Sci. U.S.A.}, 27:222--224, 1941.

\bibitem{MR895139}
T.~Jech.
\newblock {\em Multiple forcing}, volume~88 of {\em Cambridge Tracts in
  Mathematics}.
\newblock Cambridge University Press, Cambridge, 1986.

\bibitem{MR1940513}
T.~Jech.
\newblock {\em Set theory}.
\newblock Springer Monographs in Mathematics. Springer-Verlag, Berlin,
  millennium edition, 2003.

\bibitem{MR2731169}
A.~Kanamori.
\newblock {\em The higher infinite}.
\newblock Springer Monographs in Mathematics. Springer-Verlag, Berlin, second
  edition, 2009.
\newblock Large cardinals in set theory from their beginnings, Paperback
  reprint of the 2003 edition.

\bibitem{MR1774679}
V.~Kanovei and M.~Reeken.
\newblock On {B}aire measurable homomorphisms of quotients of the additive
  group of the reals.
\newblock {\em MLQ Math. Log. Q.}, 46(3):377--384, 2000.

\bibitem{MR1896675}
V.~Kanovei and M.~Reeken.
\newblock On {U}lam stability of the real line.
\newblock In {\em Unsolved problems on mathematics for the 21st century}, pages
  169--181. IOS, Amsterdam, 2001.

\bibitem{MR1841758}
V.~Kanove\u{\i} and M.~Reeken.
\newblock On {U}lam's problem concerning the stability of approximate
  homomorphisms.
\newblock {\em Tr. Mat. Inst. Steklova}, 231:249--283, 2000.

\bibitem{MR1321597}
A.~S. Kechris.
\newblock {\em Classical descriptive set theory}, volume 156 of {\em Graduate
  Texts in Mathematics}.
\newblock Springer-Verlag, New York, 1995.

\bibitem{MR756630}
K.~Kunen.
\newblock {\em Set theory}, volume 102 of {\em Studies in Logic and the
  Foundations of Mathematics}.
\newblock North-Holland Publishing Co., Amsterdam, 1983.
\newblock An introduction to independence proofs, Reprint of the 1980 original.

\bibitem{MR2494387}
M.~Li and P.~Vit\'anyi.
\newblock {\em An introduction to {K}olmogorov complexity and its
  applications}.
\newblock Texts in Computer Science. Springer, New York, third edition, 2008.

\bibitem{martino2022}
M.~Lupini.
\newblock The classification problem for extensions of torsion-free abelian
  groups, {I}, 2022, Preprint.
\newblock \textsf{arXiv:2204.05431}.

\bibitem{lutz03}
J.~H. Lutz.
\newblock The dimensions of individual strings and sequences.
\newblock {\em Inform. and Comput.}, 187(1):49--79, 2003.

\bibitem{MR3811993}
J.~H. Lutz and N.~Lutz.
\newblock Algorithmic information, plane {K}akeya sets, and conditional
  dimension.
\newblock {\em ACM Trans. Comput. Theory}, 10(2):Art. 7, 22, 2018.

\bibitem{mayordomo02}
E.~Mayordomo.
\newblock A {K}olmogorov complexity characterization of constructive
  {H}ausdorff dimension.
\newblock {\em Inform. Process. Lett.}, 84(1):1--3, 2002.

\bibitem{MR171880}
C.~C. Moore.
\newblock Extensions and low dimensional cohomology theory of locally compact
  groups. {I}, {II}.
\newblock {\em Trans. Amer. Math. Soc.}, 113:40--63; ibid. 113 (1964), 64--86,
  1964.

\bibitem{MR2526093}
Y.~N. Moschovakis.
\newblock {\em Descriptive set theory}, volume 155 of {\em Mathematical Surveys
  and Monographs}.
\newblock American Mathematical Society, Providence, RI, second edition, 2009.

\bibitem{MR2548883}
A.~Nies.
\newblock {\em Computability and randomness}, volume~51 of {\em Oxford Logic
  Guides}.
\newblock Oxford University Press, Oxford, 2009.

\bibitem{MR507327}
T.~M. Rassias.
\newblock On the stability of the linear mapping in {B}anach spaces.
\newblock {\em Proc. Amer. Math. Soc.}, 72(2):297--300, 1978.

\bibitem{Richter2024}
L.~Richter.
\newblock {On the Definability and Complexity of Sets and Structures}.
\newblock {\em PhD thesis, \normalfont{Victoria University of Wellington, NZ}},
  4 2024.

\bibitem{MR2455920}
J.~J. Rotman.
\newblock {\em An introduction to homological algebra}.
\newblock Universitext. Springer, New York, second edition, 2009.

\bibitem{MR357114}
S.~Shelah.
\newblock Infinite abelian groups, {W}hitehead problem and some constructions.
\newblock {\em Israel J. Math.}, 18:243--256, 1974.

\bibitem{MR1723993}
S.~G. Simpson.
\newblock {\em Subsystems of second order arithmetic}.
\newblock Perspectives in Mathematical Logic. Springer-Verlag, Berlin, 1999.

\bibitem{MR882921}
R.~I. Soare.
\newblock {\em Recursively enumerable sets and degrees}.
\newblock Perspectives in Mathematical Logic. Springer-Verlag, Berlin, 1987.
\newblock A study of computable functions and computably generated sets.

\bibitem{MR120127}
S.~M. Ulam.
\newblock {\em A collection of mathematical problems}, volume no. 8 of {\em
  Interscience Tracts in Pure and Applied Mathematics}.
\newblock Interscience Publishers, New York-London, 1960.

\bibitem{MR280310}
S.~M. Ulam.
\newblock {\em Problems in modern mathematics}.
\newblock Science Editions John Wiley \& Sons, Inc., New York, 1964.

\bibitem{MR1795407}
K.~Weihrauch.
\newblock {\em Computable analysis}.
\newblock Texts in Theoretical Computer Science. An EATCS Series.
  Springer-Verlag, Berlin, 2000.
\newblock An introduction.

\end{thebibliography}

\end{document}